\newtheorem{theorem}{Theorem}[section]
\newtheorem{lemma}[theorem]{Lemma}
\newtheorem{proposition}[theorem]{Proposition}
\newtheorem{corollary}[theorem]{Corollary}
\theoremstyle{definition}
\newtheorem{definition}[theorem]{Definition}
\newtheorem{remark}{Remark}
\numberwithin{equation}{section}
\newtheorem{example}{Example}
\DeclareMathOperator{\inj}{inj}
\DeclareMathOperator{\diam}{diam}
\DeclareMathOperator{\dvol}{dvol}
\DeclareMathOperator{\vol}{vol}
\DeclareMathOperator{\loc}{loc}
\DeclareMathOperator{\di}{div}
\DeclareMathOperator{\lo}{loc}
\DeclareMathOperator{\He}{Hess}
\author{Canjun Meng}
\address{
Department of Mathematics\\
East China University of Science and Technology\\
200237 Shanghai, China}
\email{10173968@mail.ecust.edu.cn
}
\author{Han Wang}
\address{
Department of Mathematics\\
East China University of Science and Technology\\
200237 Shanghai, China}
\email{10172443@mail.ecust.edu.cn
}
\author{Wei Zhao}
\address{
Department of Mathematics\\
East China University of Science and Technology\\
200237 Shanghai, China}
\email{szhao\underline{ }wei@yahoo.com}
\keywords{Hardy inequality, best constant, closed manifold, weighted Ricci curvature, weighted Riemannian manifold, $p$-Laplacian}
\subjclass[2010]{Primary  26D10, Secondary   53C21, 53C23}
\begin{document}

\title[Hardy type inequalities on closed  manifolds via Ricci curvature]{Hardy type inequalities on closed  manifolds via Ricci curvature}

\begin{abstract}
The paper is devoted to Hardy type inequalities on closed manifolds. By means of various weighted Ricci curvatures, we establish several sharp Hardy type inequalities on closed weighted Riemannian manifolds.
Our results complement in several aspects those obtained recently in the noncompact   Riemannian
setting.
\end{abstract}
\maketitle

\section{Introduction} \label{sect1}
The classical Hardy inequality states that for any $p\in (1,n)$,
\begin{align*}
\int_{\mathbb{R}^n} |\nabla u|^pdx\geq \left(\frac{n-p}{p}\right)^p\int_{\mathbb{R}^n} \frac{|u|^p}{|x|^p}dx,\ \forall\,u\in C^\infty_0(\mathbb{R}^n),\tag{1.1}\label{1.1newone}
\end{align*}
where
  $\left[{(n-p)}/{p}\right]^p$ is sharp (see for instance   Hardy et al. \cite{HPL}). It is well-known that Hardy inequalities play a prominent role in the theory of linear and nonlinear partial differential equations. See e.g. \cite{BCC,BV,CM,DA,PV,V,VZ} and references therein.

 In recent years, a lot of  effort  has been
devoted to the study of Hardy inequalities in curved spaces.  As far as we know, Carron \cite{Ca} was the first who studied weighted $L^2$-Hardy inequalities  on Riemannian manifolds. Inspired by \cite{Ca}, a systematic study of the Hardy inequality is carried out by Berchio,  Ganguly and  Grillo \cite{BGD}, D'Ambrosio and Dipierro \cite{DD},   Kombe and \"Ozaydin \cite{KO,KO2}, Yang, Su and Kong \cite{YSK}, etc.
In the aforementioned works, a complete  non-compact  Riemannian manifold, which is usually of  nonpositive sectional curvature,  is a necessary condition.
 So this raises naturally a question as to how to establish the Hardy inequalities on a closed   manifold (i.e., a compact manifold without boundary) of nonnegative sectional/Ricci curvature?

 In view of Myers' compactness theorem, the assumption of the question above  is natural. However, up to now, no result is available
in the literature concerning this problem. The main issue is that the Hardy inequality  is generally  invalid for constant functions (e.g. (\ref{1.1newone})), but such functions are smooth functions with compact support on a closed manifold. Another issue is a technical difficulty which raises from the fact that the Laplacian of the distance function is bounded above
when the sectional/Ricci curvature is nonnegative. However, one usually needs the lower bound  of the Laplacian to establish
the Hardy inequality (cf. \cite{DD,KR,KS,YSK}, etc).

On the other hand,  although there has been tremendous interest in developing the Hardy inequalities in the Riemannian framework,  limited work has been done in the category of weighted Riemannian manifolds.
Recall that a triple $(M,g,d\mu)$ is called  a  weighted Riemannian manifold if $(M,g)$ is a Riemannian manifold endowed with a smooth measure $d\mu$. For convenience, express $d\mu$ by $e^{-\Psi}\dvol_g$, where $\Psi$ is a smooth function on $M$ and $\dvol_g$ is the standard Riemannian measure induce by $g$.
According to Lott and Villani \cite[Definition 7.1]{LV} (also see \cite{ST,WW22}),  the weighted Ricci curvature, {$\mathbf{Ric}_N$}, is defined as follows:
\begin{align*}\mathbf{Ric}_N=\left\{
\begin{array}{lll}
\mathbf{Ric}+\He (\Psi), && \text{ for }N=\infty,\\
\\
\mathbf{Ric}+\He(\Psi)-\frac{1}{N-n}d\Psi\otimes d\Psi, && \text{ for }n<N<\infty,\tag{1.2}\label{RiicNdonete}\\
\\
\mathbf{Ric}+\He(\Psi)-\infty \left(d\Psi\otimes d\Psi\right), && \text{ for } N=n.
\end{array}
\right.
\end{align*}
Hereafter, $\mathbf{Ric}$ denotes  the standard Ricci curvature of $(M,g)$.
The weighted Ricci curvature has an important influence on the geometry of manifolds. For example, Myers' compactness theorem remains valid for weighted Riemannian manifolds with positive weighted Ricci curvature (cf. \cite{Ba,L,LV,O,WW22}, etc).
But as far as we know, there is  no results concerning the relation between the weighted Ricci curvature and the Hardy inequality.

The purpose of the present paper is  to address the questions above. In particular, we
 establish several sharp Hardy type inequalities on  closed weighted Riemannian manifolds with nonnegative weighted Ricci curvature.

In order to state our main results, we introduce a notation first.
Given a point $o$ in a closed  manifold $M$, set
\[
C^\infty(M,o):=\{u\in C^\infty(M):\, u(o)=0\}.
\]
Our first result reads as follows.

\begin{theorem}\label{reverRicinfty}
Let $(M,g,d\mu)$ be an $n(\geq 2)$-dimensional closed  weighted Riemannian manifold with $\mathbf{Ric}_N\geq 0$ for some $N\in [n,\infty)$. Given a point $o\in M$, set $r(x):=\text{dist}(o,x)$, i.e., the distance between $o$ and $x$.
For any $p,\beta\in \mathbb{R}$ with $p\in (1,N)\cup(N,\infty)$, $\beta<-N$ and $p+\beta>-n$,
we have
 \[
\int_{M}|\nabla u |^pr^{p+\beta} {d\mu}\geq \left( \frac{|N+\beta|}{p} \right)^p\int_{M} {|u|^p}r^{\beta}{d\mu}, \ \forall\, u\in C^\infty(M,o).\tag{1.3}\label{RicNr}
\]
Moreover,  if $N=n$, then $\left( \frac{|N+\beta|}{p} \right)^p$ is sharp with respect to $C^\infty(M,o)$, i.e.,
\[
\left( \frac{|n+\beta|}{p} \right)^p=\inf_{u\in  C^\infty(M,o)\backslash\{0\}}\frac{\int_{{M}}|\nabla u|^p r^{p+\beta}d \mu}{\int_M |u|^p r^{\beta} {d\mu}}.
\]
\end{theorem}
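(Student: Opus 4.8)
The plan is to adapt the classical vector-field (integration-by-parts) argument to the weighted setting, using the weighted Laplacian comparison as the only geometric input. Write $d\mu=e^{-\Psi}\dvol_g$ and let $\Delta_\Psi:=\Delta-\langle\nabla\Psi,\nabla\,\cdot\,\rangle$ be the associated drift Laplacian, so that $\int_M \Delta_\Psi f\,\varphi\,d\mu=-\int_M\langle\nabla f,\nabla\varphi\rangle\,d\mu$. The hypothesis $\mathbf{Ric}_N\geq0$ with $N<\infty$ gives the comparison estimate $\Delta_\Psi r\leq (N-1)/r$, holding pointwise on $M\setminus(\{o\}\cup\Cut(o))$ and, by Calabi's barrier argument, distributionally on $M\setminus\{o\}$ with the singular part on $\Cut(o)$ carrying the favorable sign. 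This is the only step where the curvature assumption is used.

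Next I would exploit the sign forced by $\beta<-N$. Taking the radial vector field $V:=r^{\beta+1}\nabla r$ and using $|\nabla r|=1$, one computes $\di_\mu V=(\beta+1)r^\beta+r^{\beta+1}\Delta_\Psi r\leq(N+\beta)\,r^\beta$, where crucially $N+\beta<0$. Pairing the weighted divergence identity with the nonnegative weight $|u|^p$ and integrating by parts then yields
\[
\int_M r^{\beta+1}\langle\nabla r,\nabla|u|^p\rangle\,d\mu=-\int_M|u|^p\,\di_\mu V\,d\mu\geq|N+\beta|\int_M r^\beta|u|^p\,d\mu.
\]
Since $\nabla|u|^p=p|u|^{p-2}u\,\nabla u$ and $|\langle\nabla r,\nabla u\rangle|\leq|\nabla u|$, the left-hand side is at most $p\int_M r^{\beta+1}|u|^{p-1}|\nabla u|\,d\mu$. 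Splitting the integrand as $\bigl(|u|^{p-1}r^{\beta(p-1)/p}\bigr)\bigl(|\nabla u|\,r^{(p+\beta)/p}\bigr)$ and applying H\"older with exponents $p/(p-1)$ and $p$ turns this into $p\,\bigl(\int_M r^\beta|u|^p\,d\mu\bigr)^{(p-1)/p}\bigl(\int_M r^{p+\beta}|\nabla u|^p\,d\mu\bigr)^{1/p}$; dividing through by $\bigl(\int_M r^\beta|u|^p\,d\mu\bigr)^{(p-1)/p}$ and raising to the $p$-th power gives (\ref{RicNr}).

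The analytic obstacle, and the step requiring the most care, is to justify the integration by parts in the presence of the singularity of $V$ at $o$ and the non-smoothness of $r$ on $\Cut(o)$. I would carry out the identity on $M\setminus\overline{B_\varepsilon(o)}$ and let $\varepsilon\to0$: since $u(o)=0$ and $u$ is smooth we have $|u|\lesssim r$ near $o$, so the boundary term over $\partial B_\varepsilon(o)$ is $O(\varepsilon^{p+\beta+n})$, which vanishes precisely because $p+\beta>-n$. This is exactly where that hypothesis is needed; it also guarantees that both integrals in (\ref{RicNr}) converge. The cut locus is handled by invoking the comparison in the distributional form above against the admissible nonnegative test object $|u|^p r^{\beta+1}$, the singular contribution only improving the inequality.

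Finally, for the sharpness claim when $N=n$, I would first note that $\mathbf{Ric}_n\geq0$ forces $d\Psi\equiv0$ (so $d\mu$ is a constant multiple of $\dvol_g$) and $\mathbf{Ric}\geq0$, reducing matters to the unweighted case. The critical exponent is dictated by the true dimension through the volume asymptotics $\dvol_g\sim r^{n-1}\,dr$ near $o$: the integral $\int_M r^{sp+\beta}\,d\mu$ converges iff $s>|n+\beta|/p$, and for $u=r^s$ one has the exact identity $|\nabla u|^p r^{p+\beta}=|s|^p|u|^p r^\beta$, so the Rayleigh quotient equals $|s|^p$. Taking $s\downarrow|n+\beta|/p$ and concentrating suitably regularized, compactly supported modifications of $r^s$ near $o$ (where the metric is Euclidean to leading order), the blow-up of $\int_M r^{sp+\beta}\,d\mu$ is dominated by the behavior at $o$, the smoothing and the far-away tail contribute negligibly, and the quotient tends to $(|n+\beta|/p)^p$. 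The main subtlety here is producing genuinely smooth test functions vanishing at $o$ that still recover the critical power $r^{|n+\beta|/p}$ in the limit; the matching of constants fails for $N>n$ precisely because then $|N+\beta|<|n+\beta|$, which is why sharpness is asserted only for $N=n$.
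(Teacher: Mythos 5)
Your argument is correct and rests on the same two pillars as the paper's: the weighted Laplacian comparison $\Delta_\Psi r\le (N-1)/r$ as the only geometric input, and quasi-extremizers behaving like $r^{(|n+\beta|+\epsilon)/p}$ near $o$ for sharpness. The difference is organizational, and worth noting. The paper never integrates by parts directly against $u\in C^\infty(M,o)$: it first proves the inequality for $u\in C^\infty_0(M_o)$ (Theorem \ref{rhardy1}), by checking that $\rho=r$ with $\alpha=(p-N)/(p-1)$ satisfies $-\Delta_{\mu,p}(c\rho^{\alpha})\ge 0$ and feeding this into the D'Ambrosio-type machinery of Lemmas \ref{divlemf}--\ref{mainlemmforcr} (which is exactly your vector field $X=-\alpha r^{\beta+1}\nabla r$ plus H\"older, in disguise), and then passes from $C^\infty_0(M_o)$ to $C^\infty(M,o)$ by a weighted Sobolev density argument (Lemma \ref{thsob} plus Fatou). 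You instead excise $B_\varepsilon(o)$, estimate the boundary term as $O(\varepsilon^{p+\beta+n})$ using $|u|\lesssim r$, and let $\varepsilon\to 0$; this is a legitimate shortcut that bypasses the approximation lemmas, and it has the side benefit that the hypothesis $p\neq N$ (which in the paper is only there to make $\alpha\neq 0$) plays no role in your derivation. Your treatment of the cut locus (distributional comparison with favorable sign of the singular part) is if anything more explicit than the paper's. For sharpness your family $r^{s}$, $s\downarrow |n+\beta|/p$, is the same germ as the paper's $v=(\rho/s)^{c(\epsilon)}$ with $c(\epsilon)=(|n+\beta|+\epsilon)/p$ near $o$; the paper resolves the regularity issue you flag by truncating via $v_\delta=\max\{v-\delta,0\}$ (so the function vanishes identically near $o$) and then mollifying the resulting compactly supported Lipschitz function in the weighted norm (Lemmas \ref{lpschcom} and \ref{controlnormandspace}); you would need to spell out an equivalent truncation, but the concentration estimate you describe (divergent denominator dominated by the behavior at $o$, bounded contributions from the cutoff and the far field) is the right one and does close the argument.
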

In the case when $\Psi=\text{const}$, (\ref{RicNr}) is not optimal if $N\neq n$.
 On the other hand, if $N=n$,  bounding
$\mathbf{Ric}_N$ from below makes sense only if $\Psi=\text{const}$, in which case $\mathbf{Ric}_n=\mathbf{Ric}$. Hence,  (\ref{RicNr}) implies a sharp Hardy type inequalities on
 a closed Riemannian manifold $(M,g)$ with nonnegative Ricci curvature. That is,
for any $p>n$,
\begin{align*}
\int_{M}|\nabla u|^p \dvol_g&\geq  \left(\frac{p-n}p \right)^p\int_{M}  \frac{|u|^p}{r^{p}} \dvol_g,\ \ \forall\, u\in C^\infty(M,o).\tag{1.4}\label{newhary1.1}
\end{align*}
By comparing (\ref{newhary1.1}) with (\ref{1.1newone}), we see that $C^\infty(M,o)$ is  a  good candidate for establishing the Hardy type inequality on closed manifolds.

Now we consider the case of $\infty$-Ricci curvature. Recall $d\mu=e^{-\Psi}\dvol_g$.
Since $(M,g)$ is closed, both the upper bound of $|\Psi|$ and the lower bound of $\partial_r\Psi:=\frac{\partial \Psi(r,y)}{\partial r}$ always exist, where  $(r,y)$ denotes the polar coordinate system around $o$.
Then we have the following result.
\begin{theorem}\label{secondrpesenthe}
Let $(M,g,d\mu)$ be an $n(\geq 2)$-dimensional closed  weighted Riemannian manifold with $\mathbf{Ric}_\infty\geq 0$. Given a point $o\in M$, let $r(x):=\text{dist}(o,x)$. Set $d\mu:=e^{-\Psi}\dvol_g$.

\smallskip

(i) Suppose $\partial_r\Psi\geq -\lambda$ for some $\lambda\geq 0$.
Thus, for any $p,\beta\in \mathbb{R}$ and $\alpha\in \mathbb{R}\backslash\{0\}$ with
\[
p>\max\{1,-n-\beta\},\ \beta+1<(\alpha-1)(p-1)\leq -[n-1+\lambda \diam(M)],
\]
we have
 \[
\int_{M} | \nabla u|^p r^{p+\beta} d\mu\geq (\vartheta_{\alpha,\beta,p})^p\int_{M}{|u|^p}r^{\beta} d\mu,\ \forall\, u\in C^\infty(M,o),
\]
where $\vartheta_{\alpha,\beta,p}:={[(\alpha-1)(p-1)-\beta-1]}/{p}$.
In particular, $(\vartheta_{\alpha,\beta,p})^p$ is sharp if $\lambda=0$ and $\alpha=\frac{p-n}{p-1}\neq0$.

\smallskip

(ii)
Suppose $|\Psi|\leq k$ for some $k\geq 0$. Thus, for any
$p,\beta\in \mathbb{R}$ and $\alpha\in \mathbb{R}\backslash\{0\}$ with
\[
p>\max\{1,-n-\beta\},\ \beta+1<(\alpha-1)(p-1)\leq -\left(n-1+4k\right),
\]
 we have
 \[
\int_{M} | \nabla u|^pr^{p+\beta} d\mu\geq (\vartheta_{\alpha,\beta,p})^p\int_{M} {|u|^p}r^{\beta}d\mu,\ \forall\, u\in C^\infty(M,o),
\]
where $\vartheta_{\alpha,\beta,p}$ is defined as above.
In particular, $(\vartheta_{\alpha,\beta,p})^p$ is sharp if $k=0$ and
$\alpha=\frac{p-n}{p-1}\neq0$.
\end{theorem}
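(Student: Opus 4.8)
The plan is to treat (i) and (ii) at once, the sole difference being the constant produced by the Laplacian comparison. Write $\Delta_\mu r:=\Delta r-\partial_r\Psi$ for the weighted Laplacian of $r$, put $\eta:=\vartheta_{\alpha,\beta,p}=[(\alpha-1)(p-1)-\beta-1]/p$, and set $m:=-\beta-1-C$ with $C:=n-1+\lambda\diam(M)$ in case (i) and $C:=n-1+4k$ in case (ii). The two-sided restriction on $(\alpha-1)(p-1)$ in each hypothesis says precisely that $0<\eta\le m/p$. The analytic input I would isolate first is the weighted Laplacian comparison, which replaces, in the $\infty$-setting, the clean bound $\Delta_\mu r\le(N-1)/r$ available under $\mathbf{Ric}_N\ge0$. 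Under $\mathbf{Ric}_\infty\ge0$ the Bochner--Riccati technique along a minimal geodesic from $o$ only gives $(\Delta_\mu r)'\le-(\Delta r)^2/(n-1)$, which does not control $\Delta_\mu r$ on its own; feeding in $\partial_r\Psi\ge-\lambda$ (resp.\ $|\Psi|\le k$) upgrades it, in the barrier sense across $\Cut(o)$, to $\Delta_\mu r\le(n-1)/r+\lambda$ (resp.\ $\Delta_\mu r\le(n-1+4k)/r$). Since $r\le\diam(M)$, both are absorbed into the single bound $r\,\Delta_\mu r\le C$.

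The core estimate comes from the convexity of $t\mapsto|t|^p$. Applying $|a|^p\ge p|b|^{p-2}\langle b,a\rangle-(p-1)|b|^p$ with $a=\nabla u$ and the radial field $b=\eta\,u\,r^{-1}\nabla r$, multiplying by $r^{p+\beta}$, and using $p|u|^{p-2}u\,\partial_r u=\partial_r(|u|^p)$, I obtain the pointwise inequality
\[
r^{p+\beta}|\nabla u|^p\ge\eta^{p-1}\,r^{\beta+1}\,\partial_r(|u|^p)-(p-1)\eta^p\,r^{\beta}|u|^p.
\]
Integrating against $d\mu$ and integrating the first term by parts rewrites it as $-\eta^{p-1}\int_M|u|^p\,\di_\mu(r^{\beta+1}\nabla r)\,d\mu$, where $\di_\mu(r^{\beta+1}\nabla r)=r^{\beta}[(\beta+1)+r\,\Delta_\mu r]$. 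Because $\eta^{p-1}>0$, the comparison $r\,\Delta_\mu r\le C$ pushes the right way and bounds the coefficient of $r^\beta|u|^p$ below by $\Lambda(\eta):=\eta^{p-1}m-(p-1)\eta^p$; the elementary fact $\Lambda(\eta)\ge\eta^p$, which holds exactly when $\eta\le m/p$, then yields the asserted inequality with constant $\eta^p=\vartheta_{\alpha,\beta,p}^p$.

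The delicate step, and the one I expect to be the main obstacle, is justifying the integration by parts on the closed manifold, where $r$ fails to be smooth at $o$ and on $\Cut(o)$. I would excise a geodesic ball $B_\epsilon(o)$, use that $\Cut(o)$ is $\mu$-null and that the comparison holds there in the barrier sense (its singular part on the cut locus carries the favourable sign), and estimate the inner boundary contribution on $\partial B_\epsilon(o)$ by $\epsilon^{\beta+1}\sup_{\partial B_\epsilon(o)}|u|^p$ times the $d\mu$-area of $\partial B_\epsilon(o)$; since $u(o)=0$ gives $|u|\lesssim r$ and the area is $\lesssim\epsilon^{\,n-1}$, this contribution is $\lesssim\epsilon^{\,p+\beta+n}$. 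It tends to $0$ precisely because the hypothesis $p>-n-\beta$ gives $p+\beta+n>0$---the very condition that also makes both integrals in the statement convergent. It is exactly here that the function class $C^\infty(M,o)$ and the lower bound on $p$ enter; together with the establishment of the $\infty$-comparison with the stated constants, this is where the real work lies.

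It remains to prove sharpness when $\lambda=0$ (resp.\ $k=0$) and $\alpha=(p-n)/(p-1)$, for which $(\alpha-1)(p-1)=1-n=-C$, so that $\eta=m/p$ and $\Lambda(\eta)=\eta^p$. Here I would evaluate the Rayleigh quotient on a family concentrating at $o$ and modelled on the local Euclidean--Hardy extremal $r^{m/p}$, regularized so as to lie in $C^\infty(M,o)$. As the support shrinks to $o$, one has $r\,\Delta_\mu r\to n-1$ and $e^{-\Psi}\to e^{-\Psi(o)}$, so the weight cancels between numerator and denominator and the quotient tends to $(m/p)^p=\vartheta_{\alpha,\beta,p}^p$; this matches the lower bound just proved and establishes optimality.
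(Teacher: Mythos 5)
Your argument is correct in substance and reaches the stated constant under exactly the stated hypotheses (your reduction of the two-sided constraint on $(\alpha-1)(p-1)$ to $0<\eta\le m/p$ is the right normalization, and the choice $\alpha=\frac{p-n}{p-1}$ does force $\eta=m/p$ when $\lambda=0$ or $k=0$). The core mechanism --- convexity of $t\mapsto|t|^p$ against the radial field $\eta u r^{-1}\nabla r$, the divergence $\di_\mu(r^{\beta+1}\nabla r)=r^\beta[(\beta+1)+r\Delta_\mu r]$, and the Wei--Wylie comparisons $\Delta_\mu r\le (n-1)/r+\lambda$, resp.\ $(n-1+4k)/r$ --- coincides with what the paper uses (its Lemmas \ref{welaplaccompar} and \ref{divlemf} encode the same vector-field method). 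Where you genuinely diverge is in the treatment of the singular point $o$: the paper first proves the inequality for $u\in C^\infty_0(M_o)$ on the punctured manifold (Theorem \ref{rhardyRicinf}, via the weak formulation of Lemma \ref{mainlemmforcr}, so no boundary flux ever appears), and then passes to $C^\infty(M,o)$ by showing $u|_{M_o}$ lies in the weighted Sobolev space $W^{1,p}(M_o,r^{p+\beta})$ (Lemma \ref{thsob}, via the truncations $\max\{u-\delta,0\}$) and applying Fatou's lemma; you instead integrate by parts directly on $M\setminus B_\epsilon(o)$ and kill the flux through $\partial B_\epsilon(o)$ using $|u|\lesssim r$ and $p+\beta+n>0$. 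Your route is more self-contained and makes the role of the hypothesis $p>-n-\beta$ completely transparent, at the price of having to justify the divergence theorem across $\Cut(o)$ in the barrier sense for the Lipschitz field $r^{\beta+1}\nabla r$ (you correctly note the singular part of $\Delta r$ has the favourable sign since $r^{\beta+1}>0$); the paper's weak formulation with test functions supported in $M_o$ sidesteps both the cut locus and the flux estimate, but needs the weighted-Sobolev density machinery of the appendix. Your sharpness sketch is the same family as the paper's (near $o$ the paper's test function in Lemma \ref{secondlemmaforsharp} is $(r/s)^{(m+\epsilon)/p}$, and the Rayleigh quotient concentrates as $\epsilon\to0^+$ because $\int_0 r^{\epsilon-1}dr$ blows up); the only point you leave undischarged is the regularization placing these non-smooth powers in $C^\infty(M,o)$, which the paper handles through Lemmas \ref{lpschcom} and \ref{controlnormandspace}.
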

We also obtain logarithmic versions of Theorems \ref{reverRicinfty} and \ref{secondrpesenthe}. See Theorems \ref{logriccnonclosed2} and \ref{logriccnonRicinf2closed} below. On the other hand,
for any $u\in C^\infty(M)$, obviously $u-u(o)\in C^\infty(M,o)$. Thus,
we have
another version of (\ref{newhary1.1}).
\begin{theorem}\label{newharyineqtype}
Let $(M,g)$ be an $n(\geq 2)$-dimensional closed  Riemannian manifold  with nonnegative Ricci curvature. Given a point $o\in M$, set $r(x):=\text{dist}(o,x)$. For any $p>n$, we have
\begin{align*}
\int_{M}|\nabla u|^p \dvol_g&\geq  \left(\frac{p-n}p \right)^p\int_{M}  \frac{|u-u(o)|^p}{r^{p}} \dvol_g,\ \ \forall\, u\in C_0^\infty(M)=C^\infty(M).
\end{align*}
In particular, $\left(\frac{p-n}p \right)^p$ is sharp (but not achieved) in the following sense
\[
\left(\frac{p-n}p \right)^p=\inf_{u\in  C^\infty(M)\backslash \mathcal {I}_o}\frac{\int_{{M}} |\nabla u|^p \dvol_g}{\int_M \frac{|u-u(o)|^p}{r^p} {\dvol_g}},
\]
where $\mathcal {I}_o:=\{u\in C^\infty(M):\, u\equiv u(o)\}$.
\end{theorem}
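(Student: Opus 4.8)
The plan is to deduce this theorem directly from the sharp inequality (\ref{newhary1.1}) together with the $N=n$ sharpness assertion of Theorem \ref{reverRicinfty}, exploiting the elementary translation $v:=u-u(o)$ already noted just before the statement. First I would establish the inequality itself. Given any $u\in C^\infty(M)$, the function $v:=u-u(o)$ lies in $C^\infty(M,o)$ since $v(o)=0$, and $\nabla v=\nabla u$ because $u(o)$ is a constant. Applying (\ref{newhary1.1}) to $v$ gives
\[
\int_M|\nabla u|^p\dvol_g=\int_M|\nabla v|^p\dvol_g\geq\left(\frac{p-n}{p}\right)^p\int_M\frac{|v|^p}{r^p}\dvol_g,
\]
and substituting $v=u-u(o)$ yields precisely the claimed inequality for every $u\in C_0^\infty(M)=C^\infty(M)$.

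For the sharpness I would treat the two estimates for the infimum separately; write $Q(u)$ for the Rayleigh-type quotient in the statement. The bound $\inf Q\geq\left(\frac{p-n}{p}\right)^p$ is immediate from the inequality just proved: for $u\in C^\infty(M)\setminus\mathcal{I}_o$ the function $u-u(o)$ is not identically zero, so the denominator of $Q(u)$ is positive and $Q(u)\geq\left(\frac{p-n}{p}\right)^p$. For the reverse bound I would invoke the sharpness part of Theorem \ref{reverRicinfty} with $N=n$ and $\beta=-p$: here $\Psi$ is necessarily constant, so $d\mu$ is a constant multiple of $\dvol_g$ that cancels in the quotient, while $|N+\beta|=p-n$, $r^{p+\beta}=1$ and $r^{\beta}=r^{-p}$. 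This produces a minimizing sequence $\{v_k\}\subset C^\infty(M,o)\setminus\{0\}$ with
\[
\frac{\int_M|\nabla v_k|^p\dvol_g}{\int_M r^{-p}|v_k|^p\dvol_g}\longrightarrow\left(\frac{p-n}{p}\right)^p.
\]
Since each $v_k$ satisfies $v_k(o)=0$ and $v_k\not\equiv 0$, we have $v_k\in C^\infty(M)\setminus\mathcal{I}_o$ and $v_k-v_k(o)=v_k$, so $Q(v_k)$ equals the left-hand quotient above. Hence $\inf Q\leq\left(\frac{p-n}{p}\right)^p$, and combining the two bounds gives the sharpness.

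It remains to show the infimum is not attained, which I expect to be the \emph{main obstacle}, being the only step that cannot be read off the earlier statements verbatim. I would argue by contradiction: a minimizer $u_0\in C^\infty(M)\setminus\mathcal{I}_o$ of $Q$ would make $v_0:=u_0-u_0(o)\in C^\infty(M,o)\setminus\{0\}$ an extremizer of the quotient in Theorem \ref{reverRicinfty}, forcing equality throughout the pointwise and integral estimates behind (\ref{newhary1.1}). The only profile producing pointwise equality is $v_0=c\,r^{(n-p)/p}$ near $o$; indeed a direct computation gives the identity
\[
\left|\nabla\!\left(r^{(n-p)/p}\right)\right|^p=\left(\frac{p-n}{p}\right)^p r^{-p}\left|r^{(n-p)/p}\right|^p=\left(\frac{p-n}{p}\right)^p r^{\,n-2p}.
\]
Since $p>n$, the exponent $(n-p)/p$ is negative, so this profile blows up at $o$ and cannot belong to $C^\infty(M,o)$, a contradiction. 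The delicate point is to make the implication ``equality $\Rightarrow$ singular profile'' rigorous by tracing the equality case of the estimates used to derive (\ref{newhary1.1}); the remaining continuity and integrability bookkeeping near $o$ is routine.
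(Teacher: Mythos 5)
Your derivation of the inequality and of the identification of the infimum is correct and essentially follows the paper's route: the paper likewise reduces everything to the substitution $u\mapsto u-u(o)$, deducing the statement from Theorem \ref{thridthe} (whose inequality and sharpness rest on the same sources you invoke, namely Theorem \ref{reverRicinfty} and its relatives together with Lemma \ref{secondlemmaforsharp}).

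The non-attainment step, however, has a genuine gap, and it sits exactly where you flag the argument as delicate. First, you have the wrong extremal profile. Pointwise saturation of $|\nabla u|=\frac{p-n}{p}\frac{|u|}{r}$ along rays admits both $r^{(p-n)/p}$ and $r^{(n-p)/p}$, and the branch actually produced by the equality analysis is $c\,r^{(p-n)/p}$ with \emph{positive} exponent (in the paper's notation $\rho^{\gamma}$ with $\rho=r^{(p-n)/(p-1)}$ and $\gamma=(p-1)/p$). This function does not blow up at $o$; it is continuous and vanishes there, so your contradiction (``the profile blows up at $o$, hence is not in $C^\infty(M,o)$'') does not apply to the case you actually need to exclude. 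The correct obstruction is different: $\bigl|\nabla\bigl(r^{(p-n)/p}\bigr)\bigr|^p=\left(\frac{p-n}{p}\right)^p r^{-n}$ fails to be integrable near $o$, i.e.\ $r^{(p-n)/p}\notin D^{1,p}(M_o,\rho^0)$; more elementarily, a smooth $u$ with $u(o)=0$ satisfies $|u|\le Cr$ near $o$, which is incompatible with $|u|=c\,r^{(p-n)/p}$ for $c\neq0$ since $(p-n)/p\in(0,1)$. Second, the implication ``minimizer $\Rightarrow$ this profile'' cannot be asserted without a mechanism: the Hardy inequality here is obtained through weak divergence-form identities and limiting procedures, not a single pointwise estimate, so tracing the equality case is the actual content. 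The paper supplies it via the Brezis--V\'azquez improvement: Lemma \ref{leBV} and Corollary \ref{cannotachconsta} show that the deficit dominates $\frac{2\gamma^{p-2}}{p}\int_{M_o}\rho\,|\nabla\rho|^{p-2}\bigl|\nabla|v|^{p/2}\bigr|^2d\mu$ with $v=u/\rho^{\gamma}$, so equality forces $|v|$ to be constant, i.e.\ $|u|=c\rho^{\gamma}$; combined with Lemma \ref{thsob} and Remark \ref{newimoprremar}, which place $u|_{M_o}$ in $D^{1,p}(M_o,\rho^0)$, this yields $c=0$ and hence no nontrivial minimizer. Without this (or an equivalent) argument, the ``not achieved'' clause remains unproved.
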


The paper is organized as follows. We devote Section \ref{sechardpunct}  to Hardy inequalities on  punctured weighted Riemannian manifolds.
Hardy type inequalities on closed weighted Riemannian manifolds are investigated in Section \ref{distanceHardy}, where
the proofs of Theorem
\ref{reverRicinfty}-\ref{newharyineqtype} are given. We present an example in Section \ref{exapmlesec}.
And Appendix \ref{app} is devoted to several lemmas which are used throughout the previous sections.

\section{Hardy inequalities on punctured manifolds}\label{sechardpunct}

In the sequel, the quadruple $(M,o,g,d\mu)$ is called an  {\it $n(\geq2)$-dimensional closed pointed weighted Riemannian manifold} if $(M,g)$ is an $n$-dimensional closed Riemannian manifold equipped with a smooth measure $d\mu$ and $o$ is a point in $M$.
For such a space, we always set $r(x):=\text{dist}(o,x)$ and $M_o:=M\backslash\{o\}$.

Express $d\mu=e^{-\Psi}\dvol_g$, where $\Psi\in C^\infty(M)$ and $\dvol_g$ is the standard Riemannian measure induced by $g$.
Let $\Delta_\Psi:=\Delta-\nabla\Psi\cdot\nabla$ denote the Bakry-\'Emery Laplacian operator, where $\Delta$ is the standard Laplacian operator.

Let $\mathbf{Ric}_N$, $N\in [n,\infty]$ be the weighted Ricci curvature defined as in (\ref{RiicNdonete}). Given $\lambda\geq 0$, we use the notation $\partial_r\Psi\geq -\lambda$ to denote ${\partial \Psi(r,y)}/{\partial r}\geq -\lambda$ along all the minimal geodesics from $o$, where $(r,y)$ is the polar coordinate system around $o$. Then
one has the following comparison theorem. See \cite{Ba,LV,O,WW22}, etc. for the proofs.

\begin{lemma}\label{welaplaccompar}Let $(M,o,g, d\mu)$ be an $n$-dimensional pointed weighted closed Riemannian manifold.

\smallskip

(1) If $\mathbf{Ric}_N\geq 0$ for some $N\in [n,\infty)$, then $\Delta_\Psi r\leq \frac{N-1}r$ a.e. on $M$.

\smallskip

(2) If $\mathbf{Ric}_\infty\geq 0$ and $\partial_r\Psi\geq -\lambda$ for some $\lambda\geq 0$, then $ \Delta_\Psi r\leq \frac{n-1}r+\lambda$ a.e. on $M$.

\smallskip

(3) If $\mathbf{Ric}_\infty\geq 0$ and $|\Psi|\leq k$ for some $k\geq 0$, then $ \Delta_\Psi r\leq \frac{n-1+4k}r$ a.e. on $M$.

\end{lemma}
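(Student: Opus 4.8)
The plan is to reduce all three statements to the single radial Riccati inequality for $\Delta r$ coming from the Bochner formula, and then feed in the three different curvature/weight hypotheses. First I would fix a unit-speed minimal geodesic issuing from $o$ and work on $M_o\setminus\Cut(o)$, where $r$ is smooth; since $\Cut(o)$ is closed of measure zero, a pointwise bound there yields the claimed a.e. bound on $M$ (and in fact the global bound in the barrier sense). Along such a geodesic, the Bochner formula applied to $r$, together with $|\nabla r|\equiv 1$, $\He r(\nabla r,\cdot)=0$, and the Cauchy--Schwarz estimate $|\He r|^2\ge (\Delta r)^2/(n-1)$, gives $\partial_r(\Delta r)\le -\frac{(\Delta r)^2}{n-1}-\mathbf{Ric}(\partial_r,\partial_r)$. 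Writing $m:=\Delta r$ and $m_\Psi:=\Delta_\Psi r=m-\partial_r\Psi$ and using $\mathbf{Ric}_\infty(\partial_r,\partial_r)=\mathbf{Ric}(\partial_r,\partial_r)+\partial_r^2\Psi$, this becomes $\partial_r m_\Psi\le -\frac{m^2}{n-1}-\mathbf{Ric}_\infty(\partial_r,\partial_r)$.

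For part (1) I would perform a dimension shift. Using $\mathbf{Ric}_\infty=\mathbf{Ric}_N+\frac{(\partial_r\Psi)^2}{N-n}$ and substituting $m=m_\Psi+\partial_r\Psi$, the inequality reads $\partial_r m_\Psi\le -\frac{(m_\Psi+\partial_r\Psi)^2}{n-1}-\frac{(\partial_r\Psi)^2}{N-n}-\mathbf{Ric}_N(\partial_r,\partial_r)$. The key algebraic step is to minimize the right-hand side over the free quantity $\partial_r\Psi$: completing the square (the minimum is attained at $\partial_r\Psi=-\frac{N-n}{N-1}m_\Psi$) shows $\frac{(m_\Psi+\partial_r\Psi)^2}{n-1}+\frac{(\partial_r\Psi)^2}{N-n}\ge \frac{m_\Psi^2}{N-1}$, whence $\partial_r m_\Psi\le -\frac{m_\Psi^2}{N-1}-\mathbf{Ric}_N(\partial_r,\partial_r)\le -\frac{m_\Psi^2}{N-1}$ because $\mathbf{Ric}_N\ge 0$. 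Since $m_\Psi\sim\frac{n-1}{r}\to+\infty$ as $r\to 0^+$, the substitution $w:=1/m_\Psi$ (valid while $m_\Psi>0$) satisfies $w'\ge\frac1{N-1}$, and integrating from $0$ gives $w(r)\ge \frac{r}{N-1}$, i.e. $m_\Psi\le\frac{N-1}{r}$; where $m_\Psi\le 0$ the bound is trivial.

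For parts (2) and (3) I would keep the unweighted mean curvature and use only $\mathbf{Ric}_\infty\ge 0$, which turns the step-one inequality into $\partial_r m\le -\frac{m^2}{n-1}+\partial_r^2\Psi$. Comparing with the Euclidean model $\frac{n-1}{r}$, set $\phi:=m-\frac{n-1}{r}$; multiplying by the integrating factor $r^2$ collapses the quadratic terms and yields $\frac{d}{dr}(r^2\phi)\le r^2\partial_r^2\Psi$. Integrating on $(0,r)$ (the boundary term vanishes since $r^2\phi\to 0$) and integrating by parts, I would then invoke the two hypotheses separately: with $\partial_r\Psi\ge-\lambda$, one integration by parts and the estimate $-2\int_0^r s\,\partial_s\Psi\,ds\le\lambda r^2$ give $m_\Psi\le\frac{n-1}{r}+\lambda$; with $|\Psi|\le k$, two integrations by parts and the bounds $-\frac{2\Psi(r)}{r}\le\frac{2k}{r}$, $\frac{2}{r^2}\int_0^r\Psi\,ds\le\frac{2k}{r}$ give $m_\Psi\le\frac{n-1+4k}{r}$.

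The main obstacle is the sharp bookkeeping rather than any deep idea: the completing-the-square identity in part (1) must produce exactly the constant $N-1$ (so that the comparison model is the $N$-dimensional flat one), and the integration-by-parts in (2)--(3) must track signs precisely so that $\partial_r\Psi\ge-\lambda$ and $|\Psi|\le k$ enter with the correct coefficients $\lambda$ and $4k$. A secondary technical point is justifying the comparison across $\Cut(o)$, which is handled by working in the smooth region $M_o\setminus\Cut(o)$ and passing to the a.e. statement, exactly as in the standard weighted comparison theory of \cite{Ba,LV,O,WW22}.
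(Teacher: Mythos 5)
Your argument is correct and is essentially the standard Bochner--Riccati comparison proof from the references the paper itself cites for this lemma (the paper gives no proof of its own): the dimension-shift completion of the square in (1) and the integration of $\frac{d}{dr}(r^2\phi)\le r^2\partial_r^2\Psi$ with one or two integrations by parts in (2)--(3) are exactly the computations in Wei--Wylie. The only point worth flagging is that your division by $N-n$ in part (1) requires $N>n$; for $N=n$ the hypothesis forces $\Psi=\mathrm{const}$ and the claim reduces to the classical Laplacian comparison, so nothing is lost.
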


Now we explain $\Delta_\Psi r$ from a geometric point of view.
Let $S_oM$ be the unit tangent sphere in $T_oM$, i.e., $S_oM:=\{y\in T_oM: \,|y|=1\}$. For any $y\in S_oM$,  denote by $i_y$ the cut value of $y$.  Let $(r,y)$ be the polar coordinate system around $o$. Write
\begin{align*}
d\mu:=\sigma_o(r,y)\,dr\, d\nu_o(y),\ 0<r<i_y,\ y\in S_oM,
\end{align*}
where $d\nu_o(y)$ is the Lebesgue measure on $S_oM$, i.e., $\int_{S_oM}d\nu_o(y)=\vol(\mathbb{S}^{n-1})=:c_{n-1}$. Then one has
\[
\Delta_\Psi r=\frac{\partial}{\partial r}\log\sigma_o(r,y).\tag{2.1}\label{laplacianvolume}
\]
Thus, Lemma \ref{welaplaccompar} yields the following result.
\begin{lemma}\label{volumecompacrions}
Let $(M,o,g, d\mu)$ be an $n$-dimensional pointed weighted closed Riemannian manifold. Denote by $(r,y)$ the polar coordinate system around $o$.

\smallskip

(1) If $\mathbf{Ric}_N\geq 0$ for some $N\in [n,\infty)$, then for each $y\in S_oM$, the function $f_{1,y}(r):=\frac{\sigma_o(r,y)}{r^{N-1}}$ is  non-increasing for $0<r<i_y$. Moreover, $\lim_{r\rightarrow0^+}f_{1,y}(r)$ exists if and only if $N=n$.

(2) If $\mathbf{Ric}_\infty\geq 0$ and $\partial_r\Psi\geq -\lambda$ for some $\lambda\geq 0$, then for each $y\in S_oM$, the function $f_{2,y}(r):=\frac{\sigma_o(r,y)}{e^{\lambda r}r^{n-1}}$ is  non-increasing for $0<r<i_y$. In particular, we have
\[
d\mu\leq e^{-\Psi(o)+\lambda r}r^{n-1}drd\nu_o(y), \ \forall\, 0<r<i_y,\ y\in S_oM.
\]

(3) If $\mathbf{Ric}_\infty\geq 0$ and $|\Psi|\leq k$ for some $k\geq 0$, then for each $y\in S_oM$, the function $f_{3,y}(r):=\frac{\sigma_o(r,y)}{r^{n+4k-1}}$ is  non-increasing for $0<r<i_y$. Moreover, $\lim_{r\rightarrow0^+}f_{3,y}(r)$ exists if and only if $k=0$.

\end{lemma}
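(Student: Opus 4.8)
The plan is to deduce all three monotonicity statements from a single computation, using the identity (\ref{laplacianvolume}) to convert the logarithmic $r$-derivative of $\sigma_o$ into $\Delta_\Psi r$ and then invoking the matching Laplacian comparison in Lemma \ref{welaplaccompar}. Fix $y\in S_oM$ and work on the interval $0<r<i_y$, where $r$ is smooth and the polar coordinates are valid, so that the comparison estimates hold pointwise (not merely a.e.) and all derivatives below are classical. For part (1) I would compute
\[
\frac{\partial}{\partial r}\log f_{1,y}(r)=\frac{\partial}{\partial r}\log\sigma_o(r,y)-\frac{N-1}{r}=\Delta_\Psi r-\frac{N-1}{r}\leq 0,
\]
where the last step is exactly Lemma \ref{welaplaccompar}(1). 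Hence $\log f_{1,y}$, and therefore $f_{1,y}$, is non-increasing. Parts (2) and (3) are obtained verbatim: for (2) one differentiates $\log f_{2,y}$ and subtracts $\lambda+(n-1)/r$, using Lemma \ref{welaplaccompar}(2), and for (3) one subtracts $(n-1+4k)/r$, using Lemma \ref{welaplaccompar}(3); each yields a non-positive logarithmic derivative.

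For the statements about $\lim_{r\to0^+}f_{i,y}(r)$, the essential input is the behavior of the weighted volume element near the origin. Writing $d\mu=e^{-\Psi}\dvol_g$ and $\dvol_g=\mathcal{A}(r,y)\,dr\,d\nu_o(y)$ in geodesic polar coordinates, one has the standard asymptotic $\mathcal{A}(r,y)/r^{n-1}\to1$ as $r\to0^+$; since $\Psi$ is smooth, $e^{-\Psi(\exp_o(ry))}\to e^{-\Psi(o)}$, whence
\[
\frac{\sigma_o(r,y)}{r^{n-1}}\longrightarrow e^{-\Psi(o)}\quad\text{as }r\to0^+.
\]
Consequently $f_{1,y}(r)\sim e^{-\Psi(o)}r^{\,n-N}$, whose limit is finite precisely when $N=n$ and equals $+\infty$ when $N>n$; likewise $f_{3,y}(r)\sim e^{-\Psi(o)}r^{-4k}$, which converges iff $k=0$. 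This settles the ``if and only if'' clauses in (1) and (3).

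Finally, for the pointwise measure bound in part (2) I would exploit the monotonicity just proved together with the boundary value at the origin. Since $f_{2,y}$ is non-increasing on $(0,i_y)$ and $f_{2,y}(r)\to e^{-\Psi(o)}$ as $r\to0^+$ (the asymptotic above combined with $e^{\lambda r}\to1$), we get $f_{2,y}(r)\leq e^{-\Psi(o)}$ for all $0<r<i_y$, i.e. $\sigma_o(r,y)\leq e^{-\Psi(o)+\lambda r}r^{n-1}$, which is exactly the claimed estimate on $d\mu$. The only genuinely delicate point is the justification of the near-origin asymptotic of $\sigma_o$, which governs exactly when the limits exist; once (\ref{laplacianvolume}) and Lemma \ref{welaplaccompar} are in hand, the monotonicity itself is immediate and the rest is bookkeeping on the exponents.
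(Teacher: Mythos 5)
Your proof is correct and follows essentially the same route as the paper's (sketched) argument: monotonicity via the identity $\Delta_\Psi r=\partial_r\log\sigma_o(r,y)$ combined with Lemma \ref{welaplaccompar}, and the limit statements via the standard asymptotic $\sigma_o(r,y)\sim e^{-\Psi(o)}r^{n-1}$ near the origin. You simply fill in the bookkeeping the paper omits, including the derivation of the measure bound in (2) from monotonicity plus the boundary value $e^{-\Psi(o)}$, exactly as intended.
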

\begin{proof} The proof is similar to the one of the standard volume comparison theorem. Hence, we only present a sketch of the proof.

(1)  Lemma \ref{welaplaccompar} (1) together with (\ref{laplacianvolume}) yields
\[
\frac{\partial}{\partial r}\log \left( \frac{ \sigma_o(r,y)}{  r^{N-1}} \right)\leq 0\Longrightarrow \frac{d}{dr}f_{1,y}(r)\leq 0,
\]
i.e., $f_{1,y}(r)$ is non-increasing. Express $\dvol_g:= g(r,y)\, dr d\nu_o(y)$. Since $g(r,y)\sim r^{n-1}$ (cf. Chavel \cite{IC}) and $N\geq n$, it is not hard to see that   $\lim_{r\rightarrow0^+}f_{1,y}(r)$ exists if and only if $N=n$.

(2) Lemma \ref{welaplaccompar} (2) together with (\ref{laplacianvolume}) yields
\[
\frac{\partial}{\partial r}\log \left( \frac{ \sigma_o(r,y)}{ e^{\lambda r} r^{n-1}} \right)\leq 0\Longrightarrow \frac{d}{dr}f_{2,y}(r)\leq 0.
\]
The second statement follows from $\lim_{r\rightarrow 0^+}f_{2,y}(r)=e^{-\Psi(o)}$ directly.

(3) The proof is almost the same as (1) and hence, we omit it.
\end{proof}

\begin{remark}In fact,
Lemmas \ref{welaplaccompar}-\ref{volumecompacrions} are  valid for all complete weighted Riemannian manifolds.
\end{remark}

Now we recall the $p$-Laplacian operator.
Given $p>1$, the {\it  weighted $p$-Laplacian}  with respect to $d\mu$ is defined as
\[
\Delta_{\mu,p}(f):=\di_\mu\left(  |\nabla f|^{p-2} \nabla f  \right),
\]
where $\di_\mu$, the {\it divergence operator with respect to $d\mu$},  is defined as
\[
{\di}_\mu(X)d\mu:=d(X\rfloor d\mu),\ \forall \,X\in C^1(TM).\tag{2.2}\label{divegedfe}
\]
In particular, $\Delta_{\mu,p}(f)= e^{\Psi}\di_{\vol_g}\left( e^{-\Psi} |\nabla f|^{p-2} \nabla f  \right)$ and $\Delta_{\mu,2}(f)=\Delta_\Psi f$.

Following D'Ambrosio et al. \cite{DD},  we say that a function $\rho(x)\in W^{1,p}_{\lo}({M_o})$ satisfies {\it $-\Delta_{\mu,p}(c\rho)\geq 0$ in the weak sense} for some $c\in \mathbb{R}$ if
\[
c\int_{M_o} |\nabla \rho|^{p-2}g(\nabla \rho,\nabla u) d\mu\geq 0, \ \forall \,u\in C^1_0({M_o}) \text{ with }u\geq 0.\tag{2.3}\label{weak2}
\]
A vector filed $X$ is said to {\it belong to} $L^1_{\lo}(TM_o)$ if  $\int_K |X|d\mu$ is finite for any compact set $K\subset M_o$.
Given a vector filed $X\in L^1_{\lo}(T{M_o})$ and a nonnegative function  $f_X\in L^1_{\lo}({M_o})$, we say  {\it $f_X\leq \di_\mu X$ in the weak sense} if
\[
\int_{M_o} u f_X d\mu\leq -\int_{M_o}   g(\nabla u ,X)  d\mu, \ \forall \,u\in C^1_0({M_o}) \text{ with }u\geq 0.\tag{2.4}\label{weak1}
\]

\begin{remark}Owing to (\ref{divegedfe}), both (\ref{weak2}) and (\ref{weak1}) are well-defined. However,
neither of them makes sense for $u\in C_0^1(M)=C^1(M)$ since $M$ has no boundary. For example, consider a positive constant function  on $M$.
\end{remark}

Note that (\ref{divegedfe}) implies
\[
\int_{M_o} u \di_{\mu}X d\mu=-\int_{M_o} g(\nabla u,X)d\mu\leq \int_{M_o} |\nabla u||X| d\mu,\ \forall\, u\in C^1_0(M_o),
\]
which together with the proof of D'Ambrosio et al. \cite[Lemma 2.10]{DD}  furnishes the following result directly.

\begin{lemma}\label{divlemf} Let $(M,o,g,d\mu)$ be a closed pointed weighted Riemannian manifold.
Let $X\in L^1_{\lo}(T{M_o})$ be a vector filed  and let $f_X\in L^1_{\lo}({M_o})$ be a nonnegative function.
Given $p>1$, suppose the following conditions hold:

\smallskip

\ \ \ \ \ (i) $f_X\leq \di_{\mu} X$ in the weak sense;
\ \ \ \ \ (ii) $|X|^p/f_X^{p-1}\in L^1_{\lo}({M_o})$.

\smallskip

\noindent Then  we have
\begin{align*}
p^p {\int_{M_o}|\nabla u|^p\frac{|X|^p}{f_X^{p-1}} d\mu}    \geq{\int_{M_o} |u|^p f_Xd\mu},\ \forall\,u\in C^1_0({M_o}).
\end{align*}
\end{lemma}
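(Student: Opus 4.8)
The plan is to run the standard divergence (``ground-state substitution'') argument: substitute the power $|u|^p$ into the weak divergence inequality furnished by hypothesis (i), and then split the resulting cross term by a parameter-weighted Young inequality whose parameter is optimized to produce the constant $p^p$. Fix $u\in C^1_0(M_o)$. Since $p>1$, the function $v:=|u|^p$ is nonnegative, lies in $C^1_0(M_o)$, and satisfies $\nabla v=p|u|^{p-2}u\,\nabla u$ (interpreted as $0$ on $\{u=0\}$, which is legitimate because $s\mapsto|s|^p$ is $C^1$ for $p>1$). Hence $v$ is an admissible test function in (\ref{weak1}) with $f_X$, and invoking the Cauchy--Schwarz bound $|g(\nabla u,X)|\le|\nabla u|\,|X|$ afterwards we obtain
\[
\int_{M_o}|u|^p f_X\,d\mu\le-\int_{M_o}g(\nabla v,X)\,d\mu=-p\int_{M_o}|u|^{p-2}u\,g(\nabla u,X)\,d\mu\le p\int_{M_o}|u|^{p-1}|\nabla u|\,|X|\,d\mu.
\]

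Next I would estimate the last integrand pointwise on $\{f_X>0\}$. Writing $p':=p/(p-1)$ and applying Young's inequality $ab\le \varepsilon^{p'}a^{p'}/p'+\varepsilon^{-p}b^p/p$ to the choices $a:=|u|^{p-1}f_X^{1/p'}$ and $b:=|\nabla u|\,|X|\,f_X^{-1/p'}$ (so that $a^{p'}=|u|^p f_X$, $b^p=|\nabla u|^p|X|^p f_X^{-(p-1)}$ and $ab=|u|^{p-1}|\nabla u|\,|X|$), then multiplying by $p$ and integrating, I arrive at
\[
\int_{M_o}|u|^p f_X\,d\mu\le (p-1)\varepsilon^{p'}\int_{M_o}|u|^p f_X\,d\mu+\varepsilon^{-p}\int_{M_o}|\nabla u|^p\frac{|X|^p}{f_X^{p-1}}\,d\mu.
\]
Here the two integrals on the right are finite because $f_X\in L^1_{\lo}(M_o)$ and, by hypothesis (ii), $|X|^p/f_X^{p-1}\in L^1_{\lo}(M_o)$, while $u$ is $C^1$ with compact support in $M_o$; this finiteness is exactly what legitimizes the subsequent absorption. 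Choosing $\varepsilon^{p'}=1/p$ makes the coefficient of the first term equal to $(p-1)/p$ and gives $\varepsilon^{-p}=p^{p-1}$; absorbing that term and rearranging yields $\int_{M_o}|u|^p f_X\,d\mu\le p^p\int_{M_o}|\nabla u|^p|X|^p f_X^{-(p-1)}\,d\mu$, which is the claim. A short one-variable optimization confirms that $\varepsilon^{p'}=1/p$ is indeed the value minimizing the resulting constant, so $p^p$ is the best one this method produces.

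The individual steps are routine once the substitution is in place, so I expect the only real friction to be bookkeeping rather than a genuine obstacle: verifying that $v=|u|^p$ is a valid $C^1_0(M_o)$ test function (the smoothness of $|s|^p$ at $s=0$ in the delicate range $1<p<2$), and correctly handling the zero set $\{f_X=0\}$, on which $|u|^p f_X$ vanishes and so contributes nothing to the left-hand side while the Young estimate is applied only where $f_X>0$. Both points are standard and match the treatment in D'Ambrosio et al. \cite{DD}; moreover, the inequality displayed immediately before this lemma already packages precisely the divergence identity and the Cauchy--Schwarz bound needed in the first step.
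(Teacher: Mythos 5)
Your argument is correct and is exactly the substitution-plus-Young argument that the paper invokes by deferring to the proof of D'Ambrosio--Dipierro \cite[Lemma 2.10]{DD}: test the weak inequality (\ref{weak1}) with $|u|^p\in C^1_0(M_o)$, apply Cauchy--Schwarz and a parameter-weighted Young inequality, and absorb (noting, as you do, that hypothesis (ii) forces $X=0$ a.e.\ on $\{f_X=0\}$ so that set contributes nothing). No substantive difference from the paper's route.
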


We now generalize D'Ambrosio \cite[Theorem 2.7]{D} to  closed weighted Riemannian manifolds.

\begin{lemma}\label{mainlemmforcr}
Let $(M,o,g,d\mu)$ be a closed pointed weighted Riemannian manifold and let $\rho\in C^1(M_o)$ be a nonnegative nonconstant function.
 Suppose that $p\in(1,\infty)$, $\alpha,\beta\in \mathbb{R}$ and $\rho$  satisfy the following conditions:

\smallskip

 (1) $\rho^{(\alpha-1)(p-1)}|\nabla\rho|^{p-1},\rho^\beta|\nabla \rho|^p,\rho^{p+\beta}\in L^1_{\lo}({M_o})$;

 (2) $-\Delta_{\mu,p}(c\rho^\alpha)\geq 0$, where $c=\alpha[(\alpha-1)(p-1)-\beta-1]\neq 0$.

\smallskip

\noindent Then we have
\begin{align*}
\int_{{M_o}} |\nabla u|^p\rho^{p+\beta}d\mu\geq (\vartheta_{\alpha,\beta,p})^p\int_{{M_o}} |u|^p\rho^\beta |\nabla\rho|^p d\mu,\ \forall\,u\in C^1_0({M_o}),\tag{2.5}\label{5.1.1}
\end{align*}
 where $\vartheta_{\alpha,\beta,p}:={|(\alpha-1)(p-1)-\beta-1|}/{p}$.
\end{lemma}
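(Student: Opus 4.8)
The plan is to deduce \eqref{5.1.1} from Lemma \ref{divlemf} by exhibiting a suitable vector field $X\in L^1_{\lo}(TM_o)$ and nonnegative weight $f_X$. Writing $s:=\beta+1-(\alpha-1)(p-1)$ (so that $c=-\alpha s$ and $p\,\vartheta_{\alpha,\beta,p}=|s|$) and setting $Z:=\rho^{(\alpha-1)(p-1)}|\nabla\rho|^{p-2}\nabla\rho$, I would take
\[
X:=\mathrm{sgn}(s)\,\rho^{\beta+1}|\nabla\rho|^{p-2}\nabla\rho=\mathrm{sgn}(s)\,\rho^{s}Z,\qquad f_X:=|s|\,\rho^{\beta}|\nabla\rho|^p.
\]
Since $c\neq0$ forces $s\neq0$, the weight $f_X$ is genuinely nonnegative. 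A direct computation gives $|X|=\rho^{\beta+1}|\nabla\rho|^{p-1}$ and hence
\[
\frac{|X|^p}{f_X^{p-1}}=|s|^{1-p}\,\rho^{p+\beta},
\]
which lies in $L^1_{\lo}(M_o)$ by hypothesis (1); this is condition (ii) of Lemma \ref{divlemf}. Likewise $f_X\in L^1_{\lo}(M_o)$ by (1), while the pointwise Young inequality $\rho^{\beta+1}|\nabla\rho|^{p-1}\le\tfrac1p\rho^{p+\beta}+\tfrac{p-1}{p}\rho^{\beta}|\nabla\rho|^p$ shows $X\in L^1_{\lo}(TM_o)$, again thanks to (1).

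The heart of the argument is condition (i), namely $f_X\le\di_\mu X$ in the weak sense \eqref{weak1}. First I would recast hypothesis (2). As $\nabla(\rho^\alpha)=\alpha\rho^{\alpha-1}\nabla\rho$, one has $|\nabla(\rho^\alpha)|^{p-2}\nabla(\rho^\alpha)=|\alpha|^{p-2}\alpha\,Z$, so the weak formulation \eqref{weak2} of $-\Delta_{\mu,p}(c\rho^\alpha)\ge0$, after substituting $c=-\alpha s$ and dividing by $|\alpha|^p>0$, becomes
\begin{equation*}
s\int_{M_o} g(Z,\nabla v)\,d\mu\le0,\qquad\forall\,v\in C^1_0(M_o),\ v\ge0.\tag{$\star$}
\end{equation*}
Then, for a fixed nonnegative $u\in C^1_0(M_o)$, I would apply the product rule $\rho^s\nabla u=\nabla(\rho^s u)-s\,u\,\rho^{s-1}\nabla\rho$ together with $g(\nabla\rho,Z)=\rho^{(\alpha-1)(p-1)}|\nabla\rho|^p$ and the exponent identity $s-1+(\alpha-1)(p-1)=\beta$ to obtain
\[
\int_{M_o} g(\nabla u,X)\,d\mu=\mathrm{sgn}(s)\int_{M_o} g(\nabla(\rho^s u),Z)\,d\mu-\int_{M_o} u\,f_X\,d\mu.
\]
Testing $(\star)$ against $v=\rho^s u\ge0$ shows that the first term on the right is nonpositive, whence $\int_{M_o} u\,f_X\,d\mu\le-\int_{M_o} g(\nabla u,X)\,d\mu$, which is precisely \eqref{weak1}.

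With conditions (i) and (ii) verified, Lemma \ref{divlemf} yields $p^p\int_{M_o}|\nabla u|^p\,|X|^p f_X^{1-p}\,d\mu\ge\int_{M_o}|u|^p f_X\,d\mu$, that is $\int_{M_o}|\nabla u|^p\rho^{p+\beta}d\mu\ge(|s|/p)^p\int_{M_o}|u|^p\rho^{\beta}|\nabla\rho|^p\,d\mu$, and $(|s|/p)^p=(\vartheta_{\alpha,\beta,p})^p$ delivers \eqref{5.1.1}. The step I expect to be the main obstacle is the legitimacy of inserting $v=\rho^s u$ into $(\star)$: when $s<0$, or when $\rho$ vanishes somewhere on $M_o$, the function $\rho^s u$ need not belong to the admissible class $C^1_0(M_o)$ and the product-rule manipulation is only formal. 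I would resolve this by a truncation-and-regularization argument, approximating $\rho^s u$ by admissible test functions supported away from the zero set of $\rho$ and passing to the limit; here the three integrability requirements in (1), reinforced by the Young-type bound above, are exactly what guarantee convergence of every integral and survival of the inequality in the limit. All remaining manipulations are the routine exponent bookkeeping recorded above.
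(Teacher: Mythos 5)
Your proposal is correct and follows essentially the same route as the paper: the same choice of $X\propto\rho^{\beta+1}|\nabla\rho|^{p-2}\nabla\rho$ and $f_X\propto\rho^{\beta}|\nabla\rho|^{p}$, reduction to Lemma \ref{divlemf}, and verification of $f_X\le\di_\mu X$ by testing condition (2) against (a regularization of) $\rho^{s}u$ with $s=-c/\alpha$; the regularization you flag as the ``main obstacle'' is exactly what the paper does by taking $v=\rho_\epsilon^{-c/\alpha}u$ with $\rho_\epsilon=\rho+\epsilon$ and passing to the limit by dominated convergence using the integrability hypotheses in (1). Your use of $\mathrm{sgn}(s)$ merely unifies the two sign cases that the paper treats separately.
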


\begin{proof}
Provided that $-\Delta_{\mu,p} (\rho^\alpha)\geq 0$ and $c>0$, we set
\[
X:=-\alpha \rho^{\beta+1}|\nabla\rho|^{p-2}\nabla \rho,\ \ \ f_X:=c |\nabla\rho|^p \rho^\beta.
\]
Clearly, $f_X\in L^1_{\lo}({M_o})$. The H\"older inequality together with Condition (1) implies $\rho^{\beta+1}|\nabla\rho|^{p-1}\in L^1_{\lo}({M_o})$ and hence,
$X\in L^1_{\lo}(T{M_o})$.
Moreover,  $|X|^p/f^{p-1}_X\in L^1_{\lo}({M_o})$ because $\rho^{p+\beta}\in L^1_{\lo}({M_o})$.

 Given $\epsilon>0$ and $u\in C^1_0({M_o})$ with $u\geq 0$, set $\rho_\epsilon:=\epsilon+\rho$ and $ v:=\rho_\epsilon^{-{c}/\alpha}u$.
 Condition (2) then furnishes $\int_{{M_o}}   |\nabla \rho^\alpha|^{p-2}g(\nabla \rho^\alpha, \nabla v) d\mu\geq 0$, that is,
\begin{align*}
c \int_{{M_o}}  \rho^{(\alpha-1)(p-1)}  \rho_\epsilon^{-\frac{c}\alpha-1}|\nabla\rho|^pu d\mu\leq \alpha\int_{{M_o}} \rho^{(\alpha-1)(p-1)}\rho_\epsilon^{-\frac{c}\alpha}|\nabla\rho|^{p-2}g( \nabla \rho,\nabla u) d\mu.\tag{2.6}\label{5.3newneed}
\end{align*}
Notice that
\begin{align*}
&\rho^{(\alpha-1)(p-1)}\rho_\epsilon^{-\frac{c}\alpha-1}|\nabla\rho|^p\leq \rho^\beta|\nabla\rho|^p\in L^1_{\lo}({M_o}),\\
&\rho^{(\alpha-1)(p-1)}\rho_\epsilon^{-\frac{c}\alpha}|\nabla\rho|^{p-2}|g(  \nabla \rho,\nabla u)|\leq \rho^{\beta+1}|\nabla\rho|^{p-1}|\nabla u|\in L^1_{\lo}({M_o}).
\end{align*}
The  Lebesgue  dominated convergence theorem together with (\ref{5.3newneed}) yields
\[
\int_{{M_o}} c\rho^\beta|\nabla\rho|^pud\mu\leq\int_{{M_o}}\alpha \rho^{\beta+1}|\nabla\rho|^{p-2}g( \nabla \rho,\nabla u)d\mu,
\]
that is, $f_X\leq \di_{\mu} X$ in the weak sense. Now (\ref{5.1.1}) follows from Lemma \ref{divlemf}.

If $-\Delta_{\mu,p} (\rho^\alpha)\leq 0$ and $c<0$, we set $X:=\alpha \rho^{\beta+1}|\nabla\rho|^{p-2}\nabla \rho,\ f_X:=-c |\nabla\rho|^p \rho^\beta$ and $v:=\rho_\epsilon^{-c/\alpha}u$. Then (\ref{5.1.1}) follows from
a similar argument.
\end{proof}

In order to study the sharpness of (\ref{5.1.1}), we
  introduce the following space.
\begin{definition}\label{DefDS}
Let $(M,o,g,d\mu)$, $p,\alpha,\beta$ and $\rho$  be as in Lemma \ref{mainlemmforcr}. Additionally suppose

\smallskip

(1) $\rho\neq0$ a.e. and $\rho^{q(p+\beta)}\in L^1_{\loc}(M_o)$ for some $q>1$;

(2) there exists a positive constant $C$ such that $\rho/|\nabla\rho|\leq C$.

\smallskip

\noindent Denote by $D^{1,p}(M_o,\rho^{p+\beta})$  the closure of  $C^\infty_0(M_o)$ with respect to the norm
\[
\|u\|_D:=\left( \int_{M_o} |\nabla u|^p \rho^{p+\beta} d\mu\right)^{\frac1p}.
\]
\end{definition}

Inspired by  D'Ambrosio \cite[Remark 2.8]{D},
we have the following lemma.

\begin{lemma}\label{secondlemmaforsharp}
Let $(M,o,g,d\mu)$, $p,\alpha,\beta$ and $\rho$  be as in Definition \ref{DefDS}. Additionally suppose

\smallskip

 (i) there exists some $s>0$ such that ${M_o}^-_s:=\rho^{-1}(-\infty,s]$ and ${M_o}^+_s:=\rho^{-1}(s,\infty)$ are non-empty subsets of $M_o$ with piecewise regular boundaries;

 (ii) there exists some $\epsilon_0>0$ such that
 \[
 0<\int_{{M_o}^-_s} \rho^{c(\epsilon)p+\beta} |\nabla \rho|^p d\mu<\infty,\ 0<\int_{{M_o}^+_s} \rho^{-c(\epsilon)p+\beta} |\nabla \rho|^p d\mu<\infty,\ \forall\,\epsilon\in (0,\epsilon_0),
 \]
where $c(\epsilon):= \frac{|(\alpha-1)(p-1)-\beta-1|+\epsilon}{p}$.

\smallskip

Set \begin{align*}v(x):=\left\{
\begin{array}{lll}
\left(\frac{\rho(x)}{s}\right)^{c(\epsilon)},&& \text{ if  }x\in{M_o}^-_s,\\
\\
\left(\frac{\rho(x)}{s}\right)^{-c(\epsilon/2)},&& \text{ if  }x\in {M_o}^+_s.
\end{array}
\right.
\end{align*}
If there exists a small $\delta_0>0$ such that $v_\delta:=\max\{v-\delta,0\}$ is  a globally Lipschitz function with compact support in $M_o$ for any $\delta\in (0,\delta_0)$, then  (\ref{5.1.1}) is sharp.

\end{lemma}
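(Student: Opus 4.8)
The plan is to prove sharpness by establishing the reverse of the inequality in Lemma \ref{mainlemmforcr}. Since that lemma already gives
\[
\inf_{u\in C^\infty_0(M_o)\backslash\{0\}}\frac{\int_{M_o}|\nabla u|^p\rho^{p+\beta}d\mu}{\int_{M_o}|u|^p\rho^\beta|\nabla\rho|^p d\mu}\geq (\vartheta_{\alpha,\beta,p})^p,
\]
it suffices to exhibit a family of admissible test functions whose Rayleigh quotients converge to $(\vartheta_{\alpha,\beta,p})^p$. I would use precisely the functions $v=v_\epsilon$ from the statement, noting that $c(0)=\vartheta_{\alpha,\beta,p}$, so the exponents in $v$ are chosen to be critical as $\epsilon\to0^+$.

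First I would compute the two weighted integrals explicitly. Differentiating $v$ piecewise, one finds on ${M_o}^-_s$ that $|\nabla v|^p\rho^{p+\beta}=c(\epsilon)^p s^{-c(\epsilon)p}\rho^{c(\epsilon)p+\beta}|\nabla\rho|^p$ and $|v|^p\rho^\beta|\nabla\rho|^p=s^{-c(\epsilon)p}\rho^{c(\epsilon)p+\beta}|\nabla\rho|^p$, while on ${M_o}^+_s$ one finds $|\nabla v|^p\rho^{p+\beta}=c(\epsilon/2)^p s^{c(\epsilon/2)p}\rho^{-c(\epsilon/2)p+\beta}|\nabla\rho|^p$ and $|v|^p\rho^\beta|\nabla\rho|^p=s^{c(\epsilon/2)p}\rho^{-c(\epsilon/2)p+\beta}|\nabla\rho|^p$. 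Setting $a:=s^{-c(\epsilon)p}\int_{{M_o}^-_s}\rho^{c(\epsilon)p+\beta}|\nabla\rho|^p d\mu$ and $b:=s^{c(\epsilon/2)p}\int_{{M_o}^+_s}\rho^{-c(\epsilon/2)p+\beta}|\nabla\rho|^p d\mu$, hypothesis (ii) (applied at $\epsilon$ and at $\epsilon/2$) guarantees $0<a,b<\infty$, and the Rayleigh quotient of $v$ reduces to the convex combination
\[
\frac{\int_{M_o}|\nabla v|^p\rho^{p+\beta}d\mu}{\int_{M_o}|v|^p\rho^\beta|\nabla\rho|^p d\mu}=\frac{c(\epsilon)^p\,a+c(\epsilon/2)^p\,b}{a+b}.
\]
Because $c$ is increasing in its argument, this quantity is squeezed between $c(\epsilon/2)^p$ and $c(\epsilon)^p$, and both endpoints tend to $c(0)^p=(\vartheta_{\alpha,\beta,p})^p$ as $\epsilon\to0^+$. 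This squeeze is the conceptual core and needs no delicate estimate beyond finiteness and positivity of the four integrals; in particular it does not require either integral to dominate.

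The remaining, and principal, obstacle is to turn $v$ into a genuine competitor for the infimum, since $v$ is neither smooth nor compactly supported. I would pass to the truncations $v_\delta=\max\{v-\delta,0\}$, which by assumption are globally Lipschitz with compact support in $M_o$ for small $\delta$. Invoking the two standing conditions of Definition \ref{DefDS} — namely $\rho^{q(p+\beta)}\in L^1_{\loc}(M_o)$ for some $q>1$ and $\rho/|\nabla\rho|\leq C$ — a local mollification combined with H\"older's inequality shows that each $v_\delta$ lies in the closure $D^{1,p}(M_o,\rho^{p+\beta})$ of $C^\infty_0(M_o)$, so that (\ref{5.1.1}) extends to $v_\delta$ and it is a legitimate test function; here $q>1$ provides the integrability room for H\"older, while $\rho/|\nabla\rho|\leq C$ controls the denominator weight $\rho^\beta|\nabla\rho|^p$ by the $D$-norm weight $\rho^{p+\beta}$, ensuring the quotient is continuous along the approximation. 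Finally, since $0\leq v_\delta\uparrow v$ and $|\nabla v_\delta|=|\nabla v|\,\mathbf{1}_{\{v>\delta\}}\leq|\nabla v|$ a.e., the dominated convergence theorem (with the finiteness from (ii) furnishing the dominating bound) gives that the Rayleigh quotient of $v_\delta$ converges to that of $v$ as $\delta\to0^+$. Letting first $\delta\to0^+$ and then $\epsilon\to0^+$ yields $\inf\leq(\vartheta_{\alpha,\beta,p})^p$, which together with Lemma \ref{mainlemmforcr} shows that the constant $(\vartheta_{\alpha,\beta,p})^p$ in (\ref{5.1.1}) is sharp.
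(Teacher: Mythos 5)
Your proposal is correct and follows essentially the same route as the paper: it tests the quotient on the two-piece function $v$, makes the truncations $v_\delta$ admissible by mollification in charts using $\rho^{q(p+\beta)}\in L^1_{\loc}$ (the paper's Lemma \ref{lpschcom}) together with continuity of both weighted functionals along $\|\cdot\|_D$-approximation (the paper's Lemma \ref{controlnormandspace}), and then lets $\delta\to0^+$ and $\epsilon\to0^+$. The only differences are cosmetic --- you evaluate the quotient of $v$ exactly as the convex combination $\bigl(c(\epsilon)^p a+c(\epsilon/2)^p b\bigr)/(a+b)$ where the paper merely bounds it above by $c(\epsilon)^p$, and one small attribution is reversed: the control of the denominator functional by the $D$-norm comes from the Hardy inequality of Lemma \ref{mainlemmforcr} itself, whereas the hypothesis $\rho/|\nabla\rho|\leq C$ gives the opposite pointwise comparison $\rho^{p+\beta}\leq C^p\rho^{\beta}|\nabla\rho|^p$ and is used instead to compare $\|\cdot\|_D$ with the full Sobolev norm.
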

\begin{proof}
Since $c(\epsilon)>c(\epsilon/2)$, the assumptions (i) and (ii) yield
\begin{align*}
\infty>&c(\epsilon)^p\int_{M_o} |v|^p \rho^\beta |\nabla\rho|^pd\mu\\
=&c(\epsilon)^p\int_{{M_o}^-_s} \rho^{p+\beta}\frac{\rho^{(c(\epsilon)-1)p}}{s^{c(\epsilon)p}}|\nabla \rho|^pd\mu+c(\epsilon)^p\int_{{M_o}^+_s} \rho^{p+\beta}\frac{\rho^{(-c(\epsilon/2)-1)p}}{s^{-c(\epsilon/2)p}}|\nabla \rho|^pd\mu\\
=&\int_{M_o}\rho^{p+\beta}|\nabla v|^pd\mu+\left[ \left( \frac{c(\epsilon)}{c(\epsilon/2)} \right)^p-1 \right]\int_{{M_o}^+_s} \rho^{p+\beta}|\nabla v|^pd\mu,
\end{align*}
which implies $\|v\|_D<\infty$ and
\[
c(\epsilon)^p\int_{M_o} |v|^p \rho^\beta |\nabla\rho|^pd\mu>\int_{M_o} |\nabla v|^p\rho^{p+\beta}d\mu.\tag{2.7}\label{inequalforhard}
\]

If $v_\delta$ is  a globally Lipschitz function with compact support in $M_o$, Lemma \ref{lpschcom}  yields $v_\delta\in D^p(M_o,\rho^{p+\beta})$. Moreover,
a direct calculation together with $\|v\|_D<\infty$ furnishes
\begin{align*}
\|v_\delta-v\|_D^p
=\int_{\rho^{-1}\left(-\infty, s\delta^{\frac{1}{c(\epsilon)}}\right]}+\int_{ \rho^{-1}\left[s\delta^{-\frac{1}{c(\epsilon/2)}},+\infty\right)}|\nabla v|^p\rho^{p+\beta} d\mu\rightarrow0, \text{ as }\delta\rightarrow0^+,
\end{align*}
which implies $v\in D^p(M_o,\rho^{p+\beta})$. This fact together with Lemma \ref{controlnormandspace} yields a sequence $v_j\in C^\infty_0(M_o)$ such that
\[
\int_{M_o} |v_j|^p \rho^\beta |\nabla\rho|^pd\mu\rightarrow \int_{M_o} |v|^p \rho^\beta |\nabla\rho|^pd\mu,\ \ \ \int_{M_o} |\nabla v_j|^p\rho^{p+\beta}d\mu\rightarrow \int_{M_o} |\nabla v|^p\rho^{p+\beta}d\mu.
\]
Due to (\ref{5.1.1}) and (\ref{inequalforhard}), for any $\epsilon>0$, there is a large $j$ such that
\begin{align*}
(\vartheta_{\alpha,\beta,p})^p\leq \inf_{u\in C^1_0({M_o})\backslash\{0\}}\frac{\int_{{M_o}}|\nabla u|^p \rho^{p+\beta}d\mu}{\int_{{M_o}}|u|^p \rho^\beta |\nabla\rho|^pd\mu} \leq \lim_{j\rightarrow\infty}\frac{\int_{{M_o}} |\nabla v_j|^p\rho^{p+\beta}d\mu}{\int_{{M_o}}|v_j|^p \rho^\beta |\nabla\rho|^pd\mu}< c(\epsilon)^p\rightarrow (\vartheta_{\alpha,\beta,p})^p,
\end{align*}
as $\epsilon\rightarrow 0^+$.
So the sharpness of (\ref{5.1.1}) follows.
\end{proof}

 The  following result is   useful in this paper, although the proof is trivial.
\begin{lemma}\label{interglemaGa}Given $d>s>0$,
for any $k_1,k_2\in \mathbb{R}$ and $l\in(s,d\,]$, set
\[
H_1(k_1,k_2):=\int^s_0 \left[ \log\left( \frac{d}{r} \right)  \right]^{k_1} r^{k_2} dr,\ H_2(l,k_1,k_2):=\int^l_s \left[ \log\left( \frac{d}{r} \right)  \right]^{k_1} r^{k_2} dr.
\]
Thus,
$H_1$ is well-defined if
\begin{align*}\left\{
\begin{array}{lll}
k_1\in \mathbb{R}&& \text{ if  }k_2>-1,\\
\tag{2.8}\label{log1use}\\
k_1<-1&& \text{ if } k_2 =-1,
\end{array}
\right.
\end{align*}
while $H_2$ is well-defined if
\begin{align*}\left\{
\begin{array}{lll}
k_1,k_2\in \mathbb{R}&& \text{ if  }l<d,\\
\tag{2.9}\label{log2use}\\
k_1>-1,k_2\in \mathbb{R}&& \text{ if  }l=d.
\end{array}
\right.
\end{align*}
\end{lemma}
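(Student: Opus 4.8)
The plan is to treat both quantities as improper integrals and to isolate the single endpoint at which the integrand can fail to be absolutely integrable. Since $s<d$, on the interval $(0,s]$ the quantity $\log(d/r)$ is positive and bounded below by $\log(d/s)>0$, so the factor $[\log(d/r)]^{k_1}$ is continuous there and the only possible obstruction to $H_1$ is the behavior at $r=0$. Symmetrically, the integrand of $H_2$ is continuous on $[s,l]$ away from $r=d$; when $l<d$ the interval $[s,l]$ is a compact subset of $(0,d)$, so the integrand is bounded and $H_2$ converges for all $k_1,k_2\in\mathbb{R}$, which already settles the first line of $(\ref{log2use})$. It therefore remains to analyze $H_1$ near $r=0$ and $H_2$ near $r=d$ in the borderline case $l=d$.

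For $H_1$ with $k_2>-1$ I would exploit that a power of the logarithm grows more slowly than any power of $r$: for every $\epsilon>0$ one has $[\log(d/r)]^{k_1}=o(r^{-\epsilon})$ as $r\to 0^+$. Choosing $\epsilon>0$ small enough that $k_2-\epsilon>-1$, the integrand is dominated near $0$ by a constant multiple of $r^{k_2-\epsilon}$, which is integrable on $(0,s]$; hence $H_1$ is finite for every $k_1\in\mathbb{R}$, giving the first line of $(\ref{log1use})$.

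The two remaining boundary cases both succumb to the substitution $t=\log(d/r)$, i.e. $r=de^{-t}$ and $dr/r=-dt$, which converts the logarithmic singularity into the origin or the infinity of a classical power integral. For $H_1$ with $k_2=-1$ this yields $H_1(k_1,-1)=\int_{t_0}^{\infty}t^{k_1}\,dt$ with $t_0:=\log(d/s)>0$, which converges precisely when $k_1<-1$, matching the second line of $(\ref{log1use})$. For $H_2$ with $l=d$ the same change of variables gives $H_2(d,k_1,k_2)=d^{k_2+1}\int_{0}^{t_0}t^{k_1}e^{-(k_2+1)t}\,dt$; since $e^{-(k_2+1)t}$ is continuous and positive on the compact interval $[0,t_0]$, the convergence is governed entirely by the factor $t^{k_1}$ at $t=0$, so the integral is finite precisely when $k_1>-1$, which is the second line of $(\ref{log2use})$.

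As the statement already concedes, there is no genuine obstacle here: the entire content is the elementary estimate that a power of $\log(d/r)$ is negligible against any power of $r$ at $r=0$, together with the substitution $t=\log(d/r)$ reducing the two critical cases to the standard criteria $\int_{1}^{\infty}t^{k_1}\,dt<\infty\iff k_1<-1$ and $\int_{0}^{1}t^{k_1}\,dt<\infty\iff k_1>-1$. The only point requiring a little care is to keep the two distinct singular endpoints ($r=0$ for $H_1$, $r=d$ for $H_2$) separate and to use $0<s<d\leq d$ to ensure that all other endpoints are harmless; in particular the arguments above in fact pin down convergence exactly, so the listed conditions are sharp as well as sufficient.
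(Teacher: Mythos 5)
Your proof is correct. The paper itself supplies no argument for this lemma (it is dismissed as trivial), so there is nothing to diverge from: your verification --- domination of $[\log(d/r)]^{k_1}$ by an arbitrarily small negative power of $r$ near $r=0$ when $k_2>-1$, compactness when $l<d$, and the substitution $t=\log(d/r)$ reducing the two borderline cases to $\int_{t_0}^{\infty}t^{k_1}\,dt$ and $\int_0^{t_0}t^{k_1}e^{-(k_2+1)t}\,dt$ --- is exactly the standard computation the authors left implicit, and it correctly establishes (indeed characterizes) the stated convergence conditions.
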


In the sequel, $\inj_o$ denotes the injectivity radius of $o$, while $B_s(o)$ denotes an open geodesic ball of radius $s$ centered at $o$.

 \begin{theorem}\label{logriccnonclosed}
 Let $(M,o,g,d\mu)$ be an $n$-dimensional closed pointed weighted Riemannian manifold with $\mathbf{Ric}_N\geq 0$ for some $N\in [n,\infty)$.
Suppose that  $d,p,\beta\in \mathbb{R}$ and $\alpha\in \mathbb{R}\backslash\{0\}$ satisfy
\[
d\geq \diam(M), \ p\geq N, \  \log\left( \frac{d}{\diam(M)} \right)(N-p)\leq (\alpha-1)(p-1)< \beta+1.\tag{2.10}\label{logconditnum}
\]
Then we have
\begin{align*}
\int_{{M_o}} \left[\log\left(\frac{d}{r} \right)\right]^{p+\beta}|\nabla u|^p  {d\mu}\geq (\vartheta_{\alpha,\beta,p})^p\int_{{M_o}} \left[\log\left(\frac{d}{r} \right)\right]^{\beta} \frac{|u|^p}{r^p} {d\mu},\ \forall\,u\in C^\infty_0({M_o}),\tag{2.11}\label{loginnonclosed}
\end{align*}
 where $\vartheta_{\alpha,\beta,p}:= {[\beta+1-(\alpha-1)(p-1)]}/{p}$.
 In particular, $(\vartheta_{\alpha,\beta,p})^p$ is sharp if $N=n=p,\alpha=1, \beta\geq p-1$.
 \end{theorem}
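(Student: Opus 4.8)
The plan is to apply the general weighted Hardy inequality of Lemma \ref{mainlemmforcr} to the specific weight $\rho:=\log(d/r)$. With this choice $|\nabla\rho|=1/r$ (recall $|\nabla r|=1$ a.e.), so $\rho^{p+\beta}=[\log(d/r)]^{p+\beta}$ and $\rho^{\beta}|\nabla\rho|^{p}=[\log(d/r)]^{\beta}r^{-p}$, and (\ref{5.1.1}) becomes exactly the asserted inequality (\ref{loginnonclosed}). First I would record that, since $d\geq\diam(M)$ and $0<r\leq\diam(M)$ on $M_o$, one has $\rho\geq\log(d/\diam(M))\geq0$, so $\rho$ is a nonnegative nonconstant function, smooth off the cut locus of $o$. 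Condition (1) of Lemma \ref{mainlemmforcr} reduces to the local integrability of $[\log(d/r)]^{(\alpha-1)(p-1)}r^{-(p-1)}$, $[\log(d/r)]^{\beta}r^{-p}$ and $[\log(d/r)]^{p+\beta}$; this is routine on compact subsets of $M_o$, where $r$ is bounded away from $0$ and $\rho,1/r$ are bounded (the only delicate point, present only when $d=\diam(M)$, is integrability near the measure-zero set $\{r=\diam(M)\}$, which one checks by the coarea formula or simply avoids by taking $d>\diam(M)$).

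The heart of the argument is verifying Condition (2), i.e. $-\Delta_{\mu,p}(c\rho^{\alpha})\geq0$ weakly, where $c=\alpha[(\alpha-1)(p-1)-\beta-1]$. Off the cut locus I would use the radial identity $\Delta_{\mu,p}(F(r))=(p-1)|F'|^{p-2}F''+|F'|^{p-2}F'\,\Delta_{\Psi}r$ with $F(r)=[\log(d/r)]^{\alpha}$ to get, writing $L=\rho=\log(d/r)$ and $m=(\alpha-1)(p-1)$,
\[
\Delta_{\mu,p}(\rho^{\alpha})=\alpha|\alpha|^{p-2}L^{m-1}r^{-p}\big\{m+(p-1)L-Lr\,\Delta_{\Psi}r\big\}.
\]
Since $\beta+1-m>0$ and $\alpha^{2}|\alpha|^{p-2}L^{m-1}r^{-p}>0$ (using $L>0$ a.e.), the sign of $-\Delta_{\mu,p}(c\rho^{\alpha})$ equals that of $m+(p-1)L-Lr\,\Delta_{\Psi}r$. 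Now Lemma \ref{welaplaccompar}(1) gives $\Delta_{\Psi}r\leq(N-1)/r$, hence $-Lr\,\Delta_{\Psi}r\geq-(N-1)L$ (as $L\geq0$), so $m+(p-1)L-Lr\,\Delta_{\Psi}r\geq m+(p-N)L$. Because $p\geq N$ this lower bound is non-decreasing in $L$ and is therefore minimized at $L=\log(d/\diam(M))$, where it equals $(\alpha-1)(p-1)+(p-N)\log(d/\diam(M))$; the hypothesis $\log(d/\diam(M))(N-p)\leq(\alpha-1)(p-1)$ says precisely that this is $\geq0$. Thus the bracket is nonnegative a.e.; the passage to the weak inequality is justified because the singular part of the distributional $\Delta_{\Psi}r$ along the cut locus carries the favourable sign (the standard Calabi-type argument). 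Feeding this into Lemma \ref{mainlemmforcr} yields (\ref{loginnonclosed}) with $\vartheta_{\alpha,\beta,p}=[\beta+1-(\alpha-1)(p-1)]/p=|m-\beta-1|/p$.

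I expect this sign verification to be the main obstacle, since it is where all three structural assumptions enter in concert: $p\geq N$ makes the bracket monotone in $L$, the lower bound on $(\alpha-1)(p-1)$ pins its sign at the extreme radius $r=\diam(M)$, and the curvature bound enters through Lemma \ref{welaplaccompar}; it is also where the distributional/cut-locus subtlety must be treated carefully.

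For the sharpness claim ($N=n=p$, $\alpha=1$, $\beta\geq p-1$) I would invoke Lemma \ref{secondlemmaforsharp} with the same $\rho$. Here $c=-(\beta+1)\neq0$, $\vartheta_{\alpha,\beta,p}=(\beta+1)/p$, and $c(\epsilon)=(\beta+1+\epsilon)/p$. The requirements of Definition \ref{DefDS} are immediate: $\rho/|\nabla\rho|=r\log(d/r)$ is bounded on $(0,\diam(M)]$, and $\rho^{q(p+\beta)}$ is locally integrable. Any $s>\log(d/\diam(M))$ makes ${M_o}^-_s=\{r\geq de^{-s}\}$ and ${M_o}^+_s=\{r<de^{-s}\}$ nonempty with geodesic-sphere boundaries. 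The one genuinely quantitative check is finiteness of the inner integral in Condition (ii): its integrand is $[\log(d/r)]^{-1-\epsilon}r^{-n}$, and bounding $d\mu\leq Cr^{n-1}dr\,d\nu_{o}(y)$ near $o$ via Lemma \ref{volumecompacrions}(1) (whose limit of $\sigma_o(r,y)/r^{n-1}$ exists precisely because $N=n$) reduces it to an integral of the form $\int_{0}^{a}[\log(d/r)]^{-1-\epsilon}r^{n-1-p}\,dr=\int_{0}^{a}[\log(d/r)]^{-1-\epsilon}r^{-1}\,dr$, using $p=n$; this is finite by Lemma \ref{interglemaGa} (case $k_{2}=-1$, $k_{1}=-1-\epsilon<-1$), which is exactly why $\epsilon>0$ is needed. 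Since the associated test function $v$ tends to $0$ as $r\to0$, the truncations $v_{\delta}=\max\{v-\delta,0\}$ vanish near $o$ and are globally Lipschitz with compact support in $M_o$, so Lemma \ref{secondlemmaforsharp} yields the sharpness.
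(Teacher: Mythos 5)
Your proposal is correct and follows essentially the same route as the paper: apply Lemma \ref{mainlemmforcr} with $\rho=\log(d/r)$, verify $-\Delta_{\mu,p}(c\rho^\alpha)\geq 0$ via Lemma \ref{welaplaccompar}(1) exactly as you do (your bracket $m+(p-N)L$ minimized at $L=\log(d/\diam(M))$ is the paper's computation in different notation), check local integrability via the volume comparison (2.12) and Lemma \ref{interglemaGa}, and obtain sharpness from Lemma \ref{secondlemmaforsharp} with the same $c(\epsilon)$, the same reduction of the inner integral to $\int_0^a[\log(d/r)]^{-1-\epsilon}r^{-1}dr$, and the same truncations $v_\delta$. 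The only cosmetic difference is that the paper fixes $s=\log(2d/\inj_o)$ so that the interface is the smooth geodesic sphere of radius $\inj_o/2$, whereas you allow a general $s$; choosing the paper's $s$ removes any worry about piecewise regularity of the level set.
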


 \begin{proof}
To begin with, set $\rho:=\log\left( \frac{d}{r} \right)$ and $c:=\alpha\left[(\alpha-1)(p-1)-\beta-1  \right]$.
 Then Lemma \ref{welaplaccompar} yields
\begin{align*}
-\Delta_{\mu,p}(c\rho^\alpha)
=|\alpha|^p        \frac{\rho^{(\alpha-1)(p-1)-1}}{r^p}       \left[(\alpha-1)(p-1)-\beta-1  \right]  \left[ -(\alpha-1)(p-1) +\rho(-p+1+r\Delta_\Psi r) \right]\geq0.
\end{align*}
On the other hand, for a small $\eta\in (0, \inj_o)$, Lemma \ref{volumecompacrions} (1) yields
\[
\frac{\sigma_o(r,y)}{r^{N-1}}\leq \frac{\sigma_o(\eta,y)}{\eta^{N-1}}\leq \frac{\max_{\zeta\in S_oM}\sigma(\eta,\zeta)}{\eta^{N-1}}=:C_{N}(\eta),\ \eta\leq r<i_y,\ y\in S_oM.\tag{2.12}\label{newvolumRiccN}
\]
Now it follows from (\ref{newvolumRiccN}), (\ref{log2use}) and (\ref{logconditnum}) that
 \begin{align*}
 \int_{M\setminus B_\eta(o)}\rho^{(\alpha-1)(p-1)}|\nabla\rho|^{p-1} d\mu=&\int_{S_oM}d\nu_o(y)\int^{i_y}_{\eta}\rho^{(\alpha-1)(p-1)} \frac{\sigma_o(r,y)}{r^{p-1}}dr\\
 \leq&   c_{n-1}C_N(\eta)\int^{\diam(M)}_\eta \left[\log\left( \frac{d}{r} \right)\right]^{(\alpha-1)(p-1)} r^{N-p}dr<\infty,
 \end{align*}
 which implies  $\rho^{(\alpha-1)(p-1)}|\nabla\rho|^{p-1}\in L^1_{\lo}({M_o})$. Similarly,  one has $\rho^{p+\beta}, \rho^\beta|\nabla \rho|^p\in L^1_{\lo}({M_o})$.
 Then (\ref{loginnonclosed}) follows from Lemma \ref{mainlemmforcr} immediately.

In the sequel, we show that (\ref{loginnonclosed}) is optimal if   $N=n=p, \alpha=1$ and $\beta\geq p-1$, in which case $\mathbf{Ric}_N=\mathbf{Ric}\geq 0$  and $\Psi=\text{const}$. It is not hard to check that $\rho$ satisfies the assumption of Definition \ref{DefDS} (i.e., $\rho/|\nabla\rho|\leq d$ and $\rho^{q(p+\beta)}\in L^1_{\loc}(M_o)$ for any $q> 1$).
And the standard volume comparison theorem yields
\[
d\mu=e^{-\Psi}{\dvol_g}\leq e^{-\Psi}r^{n-1}drd\nu_o(y), \ \forall\, 0<r<i_y,\ y\in S_oM.\tag{2.13}\label{standardvolumecomparison}
\]
By letting $s=\log \left( \frac{2d}{\inj_o} \right)$, one can easily check that
\[
{M_o}^-_s:=\rho^{-1}(-\infty,s]=M\backslash {B_{\inj_o/2}(o)},\ {M_o}^+_s:=\rho^{-1}(s,+\infty)=B_{\inj_o/2}(o)\backslash\{o\},
\]
 are non-empty subsets of $M_o$ with smooth boundaries. Let $c(\epsilon):= \frac{\beta+1+\epsilon}{p}$.
Then (\ref{standardvolumecomparison}) together with   (\ref{log2use}) yields
\begin{align*}
0<\int_{{M_o}^-_s} \rho^{c(\epsilon)p+\beta} |\nabla \rho|^p d\mu\leq c_{n-1}\,e^{-\Psi}\,\int^{\diam(M)}_{\inj_o/2}\rho^{c(\epsilon)p+\beta}r^{n-p-1}dr<\infty.
\end{align*}
On the other hand, since $-c(\epsilon)p+\beta=-1-\epsilon$,   (\ref{standardvolumecomparison}) and (\ref{log1use}) imply
\begin{align*}
0<\int_{{M_o}^+_s} \rho^{-c(\epsilon)p+\beta} |\nabla \rho|^p d\mu\leq c_{n-1}\,e^{-\Psi}\,\int_0^{\inj_o/2}\rho^{-1-\epsilon}r^{-1}dr<\infty.
\end{align*}
 Now for any $\epsilon\in (0,1)$, define a function
\begin{align*}v(x):=\left\{
\begin{array}{lll}
\left(\frac{\rho(x)}{s}\right)^{c(\epsilon)}, &&\text{ if  }x\in M\backslash B_{\inj_o/2}(o),\\
\\
\left(\frac{\rho(x)}{s}\right)^{-c(\epsilon/2)}, &&\text{ if  }x\in B_{\inj_o/2}(o)\backslash\{o\}.
\end{array}
\right.
\end{align*}
Since $\beta\geq p-1$ (i.e., $c(\epsilon)>1$), for any $\delta\in (0,1)$,  $v_\delta:=\max\{v-\delta,0\}$ is a globally Lipschitz function with compact support in $M_o$. Thus, the sharpness follows from Lemma \ref{secondlemmaforsharp}.
 \end{proof}

\begin{remark}\label{betaconstan} Firstly, provided that  $\mathbf{Ric}_N\geq (N-1)K>0$,  we can choose any number in $[\pi/\sqrt{K},+\infty)$ as $d$
(see for instance Ohta \cite[Theorem 2.4]{O1}). Secondly,
if $d>\diam(M)$, then $(\vartheta_{\alpha,\beta,p})^p$ is sharp when   $N=n=p$ and $\alpha=1$, i.e., the condition $\beta\geq p-1$ is unnecessary in this case.
\end{remark}

\begin{theorem}\label{rhardy1}
Let $(M,o,g,d\mu)$ be an $n$-dimensional closed pointed weighted Riemannian manifold with $\mathbf{Ric}_N\geq0$ for some $N\in [n,\infty)$.
Given $p,\beta\in \mathbb{R}$ with $p\in (1,N)\cup (N,\infty)$ and $\beta
< -N$,  we have
 \[
\int_{M_o} | \nabla u|^p r^{p+\beta} d\mu\geq \left( \frac{|N+\beta|}{p} \right)^p\int_{M_o} {|u|^p}r^{\beta}d\mu,\ \forall\, u\in C^\infty_0(M_o).\tag{2.14}\label{firsthardfordis}
\]
In particular, $\left( \frac{|N+\beta|}{p} \right)^p$ is sharp if $n=N$.
\end{theorem}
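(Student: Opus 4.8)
The plan is to obtain (\ref{firsthardfordis}) as a direct application of Lemma \ref{mainlemmforcr} with the distance function $\rho:=r$. Since $|\nabla r|=1$ a.e., one has $\rho^\beta|\nabla\rho|^p=r^\beta$ and $\rho^{p+\beta}=r^{p+\beta}$, so the two integrals in (\ref{5.1.1}) become exactly those in (\ref{firsthardfordis}); it then only remains to fix $\alpha$ so that the constant $\vartheta_{\alpha,\beta,p}=|(\alpha-1)(p-1)-\beta-1|/p$ equals $|N+\beta|/p$. I would choose $\alpha:=\frac{p-N}{p-1}$, which is admissible ($\alpha\neq 0$ because $p\neq N$) and yields $(\alpha-1)(p-1)=1-N$, whence $(\alpha-1)(p-1)-\beta-1=-(N+\beta)$ and $\vartheta_{\alpha,\beta,p}=|N+\beta|/p$ as desired. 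Note that $c=\alpha[(\alpha-1)(p-1)-\beta-1]=\alpha\,|N+\beta|$ is nonzero, since $\beta<-N$ forces $N+\beta\neq 0$; in particular $c$ has the same sign as $\alpha$.

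The heart of the argument is to verify hypothesis (2) of Lemma \ref{mainlemmforcr}, namely $-\Delta_{\mu,p}(c\,r^\alpha)\geq 0$ in the weak sense. Using the expression of the weighted $p$-Laplacian on a radial function, $\Delta_{\mu,p}(f)=(|f'|^{p-2}f')'+|f'|^{p-2}f'\,\Delta_\Psi r$, a short computation gives
\[
\Delta_{\mu,p}(r^\alpha)=|\alpha|^{p-2}\alpha\,r^{(\alpha-1)(p-1)-1}\big[(\alpha-1)(p-1)+r\,\Delta_\Psi r\big].
\]
By Lemma \ref{welaplaccompar}(1) one has $r\,\Delta_\Psi r\leq N-1$ a.e., so the bracket is bounded above by $(1-N)+(N-1)=0$; this is the crucial cancellation forced by the choice of $\alpha$. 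Since $r^{(\alpha-1)(p-1)-1}>0$ and $c$ has the same sign as $\alpha$, the homogeneity $\Delta_{\mu,p}(c\,r^\alpha)=|c|^{p-2}c\,\Delta_{\mu,p}(r^\alpha)$ shows that $\Delta_{\mu,p}(c\,r^\alpha)$ carries the same sign as the nonpositive bracket, so $-\Delta_{\mu,p}(c\,r^\alpha)\geq 0$. The integrability requirements (1) of Lemma \ref{mainlemmforcr} reduce to $r^{1-N},r^\beta,r^{p+\beta}\in L^1_{\lo}(M_o)$, which hold because these functions are continuous on $M_o$ (equivalently, by bounding $\sigma_o(r,y)/r^{N-1}$ as in Lemma \ref{volumecompacrions}(1)). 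Lemma \ref{mainlemmforcr} then yields (\ref{firsthardfordis}) for all $u\in C^\infty_0(M_o)\subset C^1_0(M_o)$.

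For the sharpness when $N=n$ (so that $\Psi\equiv\mathrm{const}$ and $\mathbf{Ric}\geq 0$), I would invoke Lemma \ref{secondlemmaforsharp} with the same $\rho=r$ and $\alpha$. One checks that $\rho$ satisfies Definition \ref{DefDS} ($r>0$ on $M_o$, $r/|\nabla r|=r\leq\diam(M)$, and $r^{q(p+\beta)}\in L^1_{\lo}(M_o)$ for $q>1$). Taking $s:=\inj_o/2$, the sets ${M_o}^-_s=\overline{B_s(o)}\setminus\{o\}$ and ${M_o}^+_s=M\setminus\overline{B_s(o)}$ are nonempty with smooth boundary. With $c(\epsilon)=(|n+\beta|+\epsilon)/p$, the integrability (ii) holds: on ${M_o}^+_s$ the integrand is bounded on a compact set avoiding $o$, while near $o$ one has $d\mu\sim r^{n-1}\,dr\,d\nu_o(y)$ (because $\lim_{r\to 0^+}f_{1,y}(r)$ exists by Lemma \ref{volumecompacrions}(1) precisely when $N=n$), so $\int_{{M_o}^-_s}r^{c(\epsilon)p+\beta}\,d\mu$ behaves like $\int_0^s r^{c(\epsilon)p+\beta+n-1}\,dr$, which is finite since $c(\epsilon)p+\beta+n=\epsilon>0$. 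Finally, as $v=(r/s)^{c(\epsilon)}\to 0$ when $r\to 0$ while $v$ is bounded below by a positive constant on ${M_o}^+_s$, the truncation $v_\delta=\max\{v-\delta,0\}$ is globally Lipschitz with compact support in $M_o$ for all small $\delta>0$; Lemma \ref{secondlemmaforsharp} then gives the sharpness.

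The step I expect to be the main obstacle is the rigorous weak-sense justification of $-\Delta_{\mu,p}(c\,r^\alpha)\geq 0$ across the cut locus of $o$. The distance function $r$ is smooth only on $M\setminus(\{o\}\cup\Cut(o))$, so the pointwise identity above is valid only off the null set $\Cut(o)$; one must argue that the singular distributional contribution of $\Cut(o)$ carries the favorable (nonpositive) sign, exactly as in the proof of the Laplacian comparison Lemma \ref{welaplaccompar}, so that the weak inequality (\ref{weak2}) holds. Once this comparison is available in the weak form, the remaining ingredients — the algebra pinning down $\alpha$, the sign bookkeeping, and the elementary integrability estimates — are routine.
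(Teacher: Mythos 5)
Your proposal is correct and follows essentially the same route as the paper: take $\rho=r$, $\alpha=(p-N)/(p-1)$ so that $(\alpha-1)(p-1)=1-N$ cancels against the Laplacian comparison bound $r\Delta_\Psi r\leq N-1$ of Lemma \ref{welaplaccompar}(1), apply Lemma \ref{mainlemmforcr}, and for $N=n$ obtain sharpness from Lemma \ref{secondlemmaforsharp} with $s=\inj_o/2$ and $c(\epsilon)=(|n+\beta|+\epsilon)/p$. Your closing remark about justifying the comparison weakly across the cut locus is a fair point, but it is exactly the issue already absorbed into the a.e. statement of Lemma \ref{welaplaccompar} and the weak formulation (\ref{weak2}), so it does not constitute a gap relative to the paper's argument.
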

\begin{proof}(1) Set $\rho:=r|_{M_o}$, $c:=\alpha\left[(\alpha-1)(p-1)-\beta-1  \right]$ and  $\alpha:=(p-N)/(p-1)$. Obviously, one has
\[
\rho^{(\alpha-1)(p-1)}|\nabla\rho|^{p-1},\rho^\beta|\nabla \rho|^p,\rho^{p+\beta}\in L^1_{\lo}({M_o}).
\]
Moreover, Lemma \ref{welaplaccompar}  yields
\begin{align*}
-\Delta_{\mu,p} (c\rho^\alpha)=|\alpha|^pr^{(\alpha-1)(p-1)-1}(\beta+N)(1-N+r\Delta_\Psi r)\geq0.
\end{align*}
Hence, (\ref{firsthardfordis}) follows from Lemma \ref{mainlemmforcr} directly. Now we show (\ref{firsthardfordis}) is sharp if $N=n$, in which case we get $\Psi=\text{const}$ and (\ref{standardvolumecomparison}). Obviously, $\rho/|\nabla\rho|\leq \diam(M)$ and $\rho^{q(p+\beta)}\in L^1_{\loc}(M_o)$ for any $q> 1$.
Choose $s:=\inj_o/2>0$. Then ${M_o}^-_s:=\rho^{-1}(-\infty,s]=\overline{B_s(o)}\backslash\{o\}$ and ${M_o}^+_s:=\rho^{-1}(s,+\infty)=M\backslash \overline{B_s(o)}$ are non-empty subsets of $M_o$ with smooth boundaries. In particular, (\ref{standardvolumecomparison}) yields
\begin{align*}
0<\int_{{M_o}^-_s} \rho^{c(\epsilon)p+\beta} |\nabla \rho|^p d\mu
\leq\frac{c_{n-1}e^{-\Psi}\,s^{|n+\beta|+(n+\beta)+\epsilon}}{|n+\beta|+(n+\beta)+\epsilon}<\infty,
\end{align*}
where $c(\epsilon):=(|n+\beta|+\epsilon)/p$.
Since $M$ is compact, we have $0<\int_{{M_o}^+_s} \rho^{-c(\epsilon)p+\beta} |\nabla \rho|^p d\mu<\infty$. Now the sharpness follows from Lemma \ref{secondlemmaforsharp} immediately.
\end{proof}

The following example implies that if $N\neq n$, (\ref{firsthardfordis}) may be not optimal.
\begin{example}\label{notsharpexa}
Let $(\mathbb{S}^n,g)$ be an $n$-dimensional unit sphere  equipped with the canonical Riemannian metric and let  $o\in \mathbb{S}^n$ be a point.  Define a measure $d\mu=e^{-\Psi}\dvol_g$ on $\mathbb{S}^n$, where $\Psi$ is a constant.
Thus, for any $N\in [n,\infty)$, one has $\mathbf{Ric}_N=\mathbf{Ric}=n-1$.
Consequently, for any $p>N>n$, Theorem \ref{rhardy1} (also see Proposition  \ref{unitsphereexamp}) yields
\[
\int_{\mathbb{S}^n\backslash\{o\}}|\nabla u|^p {d\mu}\geq  \left(\frac{p-n}p \right)^p\int_{\mathbb{S}^n\backslash\{o\}}  \frac{|u|^p}{r^{p}} {d\mu}> \left(\frac{p-N}p \right)^p\int_{\mathbb{S}^n\backslash\{o\}}  \frac{|u|^p}{r^{p}} {d\mu}, \ \forall\,u\in C^\infty_0(M_o).
\]
Hence, $ \left(\frac{p-N}p \right)^p$  is not sharp.
\end{example}

By  a suitable modification, we obtain  the $\mathbf{Ric}_\infty$-versions of Theorems \ref{logriccnonclosed}-\ref{rhardy1}.
 \begin{theorem}\label{logriccnonclosedRicinf}
Let $(M,o,g,d\mu)$ be an $n$-dimensional closed pointed weighted Riemannian manifold with $\mathbf{Ric}_\infty\geq 0$  and $\diam(M)\leq d$.

\smallskip

(i) Suppose $\partial_r\Psi\geq -\lambda$ for some $\lambda\geq 0$.
Thus, for any $p,\beta\in \mathbb{R}$ and $\alpha\in \mathbb{R}\backslash\{0\}$ with
\[
 p\geq n+\lambda \diam(M), \  \log\left( \frac{d}{\diam(M)} \right)\left(n+\lambda \diam(M)-p\right)\leq (\alpha-1)(p-1)< \beta+1,
\]
we have
\begin{align*}
\int_{{M_o}} \left[\log\left(\frac{d}{r} \right)\right]^{p+\beta}|\nabla u|^p  {d\mu}\geq (\vartheta_{\alpha,\beta,p})^p\int_{{M_o}} \left[\log\left(\frac{d}{r} \right)\right]^{\beta} \frac{|u|^p}{r^p} {d\mu},\ \forall\,u\in C^\infty_0({M_o}),\tag{2.15}\label{Ricloginfty}
\end{align*}
 where $\vartheta_{\alpha,\beta,p}:= {[\beta+1-(\alpha-1)(p-1)]}/{p}$.
 In particular, $(\vartheta_{\alpha,\beta,p})^p$ is sharp if
 \[
 \lambda=0,\ n=p,\ \alpha=1,\ \beta\geq p-1.
 \]

 \smallskip

 (ii) Suppose $|\Psi|\leq k$ for some $k\geq 0$. Thus, for any $p,\beta\in \mathbb{R}$ and $\alpha\in \mathbb{R}\backslash\{0\}$ with
\[
p\geq n+4k, \  \log\left( \frac{d}{\diam(M)} \right)\left(n+4k-p\right)\leq (\alpha-1)(p-1)< \beta+1,
\]
we have
\begin{align*}
\int_{{M_o}} \left[\log\left(\frac{d}{r} \right)\right]^{p+\beta}|\nabla u|^p  {d\mu}\geq (\vartheta_{\alpha,\beta,p})^p\int_{{M_o}} \left[\log\left(\frac{d}{r} \right)\right]^{\beta} \frac{|u|^p}{r^p} {d\mu},\ \forall\,u\in C^\infty_0({M_o}),
\end{align*}
where $\vartheta_{\alpha,\beta,p}$ is defined as above.
In particular, $(\vartheta_{\alpha,\beta,p})^p$ is sharp if $ k=0$, $n=p$, $\alpha=1$ and $\beta\geq p-1$.

 \end{theorem}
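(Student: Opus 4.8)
The plan is to follow the proof of Theorem \ref{logriccnonclosed} almost verbatim, replacing the weighted Laplacian bound $\Delta_\Psi r\le (N-1)/r$ by its $\mathbf{Ric}_\infty$-counterparts from Lemma \ref{welaplaccompar}. For part (i) I would set $\rho:=\log(d/r)$ and $c:=\alpha[(\alpha-1)(p-1)-\beta-1]$, exactly as before, so that the same computation of the weighted $p$-Laplacian gives
\[
-\Delta_{\mu,p}(c\rho^\alpha)=|\alpha|^p\,\frac{\rho^{(\alpha-1)(p-1)-1}}{r^p}\,[(\alpha-1)(p-1)-\beta-1]\,[-(\alpha-1)(p-1)+\rho(-p+1+r\Delta_\Psi r)].
\]
Since $\rho\ge 0$, the prefactor is nonnegative, so everything reduces to checking that the product of the last two brackets is $\ge 0$. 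The hypothesis $(\alpha-1)(p-1)<\beta+1$ makes the first bracket negative, and it remains to show the second is $\le 0$.

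Here I would feed in Lemma \ref{welaplaccompar}(2), which gives $r\Delta_\Psi r\le n-1+\lambda r\le n-1+\lambda\diam(M)$ because $r\le\diam(M)$; hence $-p+1+r\Delta_\Psi r\le n+\lambda\diam(M)-p\le 0$ under $p\ge n+\lambda\diam(M)$. Since $r\le\diam(M)\le d$ forces $\rho\ge\log(d/\diam(M))\ge 0$, multiplying the nonpositive quantity $n+\lambda\diam(M)-p$ by the larger factor $\rho$ yields $\rho(-p+1+r\Delta_\Psi r)\le\log(d/\diam(M))(n+\lambda\diam(M)-p)$, whence the assumed lower bound on $(\alpha-1)(p-1)$ makes the second bracket $\le 0$ and $-\Delta_{\mu,p}(c\rho^\alpha)\ge 0$. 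In effect the role of $N$ is now played by $n+\lambda\diam(M)$. I regard this \emph{uniformization of the nonconstant term $\lambda r$ by $\lambda\diam(M)$} as the main (though mild) obstacle: it is exactly what distinguishes the $\mathbf{Ric}_\infty$ argument from the $\mathbf{Ric}_N$ one and what forces the hypothesis $p\ge n+\lambda\diam(M)$ rather than a pointwise condition.

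The integrability conditions (1) of Lemma \ref{mainlemmforcr} then follow as before: Lemma \ref{volumecompacrions}(2) gives $\sigma_o(r,y)\le C\,e^{\lambda r}r^{n-1}\le C'r^{n-1}$ for $\eta\le r<i_y$ (the factor $e^{\lambda r}$ being bounded on $[0,\diam(M)]$), so each integral reduces to an $H_2$-type integral $\int_\eta^{\diam(M)}[\log(d/r)]^{k_1}r^{k_2}\,dr$, finite by (\ref{log2use}) and the hypotheses (when $d=\diam(M)$ the lower bound on $(\alpha-1)(p-1)$ forces $k_1\ge 0>-1$), and Lemma \ref{mainlemmforcr} yields (\ref{Ricloginfty}). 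For sharpness when $\lambda=0$, $n=p$, $\alpha=1$, $\beta\ge p-1$, I would reuse the construction of Theorem \ref{logriccnonclosed}: the same $\rho$ satisfies Definition \ref{DefDS}, and Lemma \ref{volumecompacrions}(2) with $\lambda=0$ supplies the bound $d\mu\le e^{-\Psi(o)}r^{n-1}\,dr\,d\nu_o(y)$ playing the role of (\ref{standardvolumecomparison}). With $s=\log(2d/\inj_o)$ and $c(\epsilon)=(\beta+1+\epsilon)/p$, the two integrals of Lemma \ref{secondlemmaforsharp}(ii) are finite by (\ref{log1use})--(\ref{log2use}) and strictly positive by positivity of the integrand, while $\beta\ge p-1$ (i.e.\ $c(\epsilon)>1$) guarantees that $v_\delta=\max\{v-\delta,0\}$ is globally Lipschitz with compact support; Lemma \ref{secondlemmaforsharp} then delivers sharpness.

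Part (ii) is identical, invoking instead Lemma \ref{welaplaccompar}(3) and Lemma \ref{volumecompacrions}(3): the bound $r\Delta_\Psi r\le n-1+4k$ replaces $N$ by $n+4k$ with \emph{no} $\diam(M)$-correction, since the estimate is already uniform in $r$; this accounts for the hypotheses $p\ge n+4k$ and $\log(d/\diam(M))(n+4k-p)\le(\alpha-1)(p-1)<\beta+1$. For sharpness, $k=0$ forces $\Psi\equiv 0$, so the standard volume comparison (\ref{standardvolumecomparison}) applies directly and the test-function argument of Theorem \ref{logriccnonclosed} carries over unchanged.
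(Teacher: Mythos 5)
Your proposal is correct and follows essentially the same route as the paper: set $\rho=\log(d/r)$, verify $-\Delta_{\mu,p}(c\rho^\alpha)\geq 0$ via Lemma \ref{welaplaccompar}(2)/(3) (uniformizing $\lambda r$ by $\lambda\diam(M)$ exactly as you describe), check the integrability hypotheses of Lemma \ref{mainlemmforcr} through the volume comparisons of Lemma \ref{volumecompacrions}(2)/(3) and Lemma \ref{interglemaGa}, and transplant the sharpness construction of Theorem \ref{logriccnonclosed} using $d\mu\leq e^{-\Psi(o)}r^{n-1}dr\,d\nu_o(y)$ in place of (\ref{standardvolumecomparison}). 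The paper states these steps tersely; your write-up just supplies the details it leaves implicit.
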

\begin{proof}  Set $\rho:=\log\left( \frac{d}{r} \right)$ and $c:=\alpha\left[(\alpha-1)(p-1)-\beta-1  \right]$. We now show (i). It follows from Lemma \ref{welaplaccompar} (2) that $-\Delta_{\mu,p}(c\rho^\alpha)\geq 0$. And Lemma \ref{volumecompacrions} (2) furnishes
\[
d\mu\leq e^{-\Psi(o)+\lambda \diam(M)}r^{n-1}drd\nu_o(y), \ \forall\, 0<r<i_y,\ y\in S_oM,\tag{2.16}\label{standardvolumecomparison2}
\]
which together with (\ref{log2use}) implies $\rho^{(\alpha-1)(p-1)}|\nabla\rho|^{p-1}, \rho^{p+\beta}, \rho^\beta|\nabla \rho|^p\in L^1_{\lo}({M_o}).$
Thus, (\ref{Ricloginfty}) follows from Lemma \ref{mainlemmforcr} directly. The proof of the sharpness is analogous to that  of Theorem \ref{logriccnonclosed}.

\smallskip

(ii) Lemma \ref{welaplaccompar} (3) implies $-\Delta_{\mu,p}(c\rho^\alpha)\geq 0$. For a small $\eta\in (0, \inj_o)$, Lemma \ref{volumecompacrions} (3) yields
\[
\frac{\sigma(r,y)}{r^{n+4k-1}}\leq \frac{\sigma(\eta,y)}{\eta^{n+4k-1}}\leq \frac{\max_{\zeta\in S_oM}\sigma(\eta,\zeta)}{\eta^{n+4k-1}}=:C_{n+4k}(\eta),\ \eta\leq r<i_y,\ y\in S_oM.\tag{2.17}\label{newvolumRiccNinfty}
\]
From (\ref{newvolumRiccNinfty}) and (\ref{log2use}), we derive
 \begin{align*}
 \int_{M\setminus B_\eta(o)}\rho^{(\alpha-1)(p-1)}|\nabla\rho|^{p-1} d\mu&\leq   c_{n-1}C_{n+4k}(\eta)\int^{\diam(M)}_\eta \left[\log\left( \frac{d}{r} \right)\right]^{(\alpha-1)(p-1)} r^{n+4k-p}dr<\infty,
 \end{align*}
 which implies  $\rho^{(\alpha-1)(p-1)}|\nabla\rho|^{p-1}\in L^1_{\lo}({M_o})$. Similarly,  one has $\rho^{p+\beta}, \rho^\beta|\nabla \rho|^p\in L^1_{\lo}({M_o})$.
The remainder of the argument is the same as (i) and therefore, we omit it.
\end{proof}

Using the same arguments   as in the proofs  of Theorems \ref{rhardy1}-\ref{logriccnonclosedRicinf}, one can easily   carry out  the proof of the following result.

\begin{theorem}\label{rhardyRicinf}
Let $(M,o,g,d\mu)$ be an $n$-dimensional closed pointed weighted Riemannian manifold with  $\mathbf{Ric}_\infty\geq 0$.

\smallskip

(i)  Suppose $\partial_r\Psi\geq -\lambda$ for some $\lambda\geq 0$. Thus, for any
  $p,\beta\in \mathbb{R}$ and $\alpha\in \mathbb{R}\backslash\{0\}$ with
\[
p> 1, \ \beta+1<(\alpha-1)(p-1)\leq -\left[n-1+\lambda \diam(M)\right],
\]
we have
 \[
\int_{M_o} | \nabla u|^p r^{p+\beta}d\mu\geq (\vartheta_{\alpha,\beta,p})^p\int_{M_o} {|u|^p}r^{\beta}d\mu,\ \forall\, u\in C^\infty_0(M_o),
\]
 where $\vartheta_{\alpha,\beta,p}:= {[(\alpha-1)(p-1)-\beta-1]}/{p}$.
 In particular, $(\vartheta_{\alpha,\beta,p})^p$ is sharp if $\lambda=0$ and $\alpha=\frac{p-n}{p-1}\neq0$.

\smallskip

(ii)  Suppose $|\Psi|\leq k$ for some $k\geq 0$.
Thus, for any $p,\beta\in \mathbb{R}$ and $\alpha\in \mathbb{R}\backslash\{0\}$ with
\[
p> 1, \ \beta+1<(\alpha-1)(p-1)\leq -(n+4k-1),
\]
we have
 \[
\int_{M_o} | \nabla u|^p r^{p+\beta} d\mu\geq (\vartheta_{\alpha,\beta,p})^p\int_{M_o} {|u|^p}r^{\beta}d\mu,\ \forall\, u\in C^\infty_0(M_o),
\]
where $\vartheta_{\alpha,\beta,p}$ is defined as above. In particular, $(\vartheta_{\alpha,\beta,p})^p$ is sharp if $k=0$  and $\alpha=\frac{p-n}{p-1}\neq0$.
\end{theorem}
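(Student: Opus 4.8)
The plan is to run the argument of Theorem~\ref{rhardy1} essentially verbatim, replacing only the Laplacian comparison input. In both parts I set $\rho:=r|_{M_o}$ and $c:=\alpha[(\alpha-1)(p-1)-\beta-1]$. Since the hypothesis $\beta+1<(\alpha-1)(p-1)$ guarantees $(\alpha-1)(p-1)-\beta-1>0$, the constant $c$ is nonzero with $\mathrm{sign}(c)=\mathrm{sign}(\alpha)$, and $\vartheta_{\alpha,\beta,p}=[(\alpha-1)(p-1)-\beta-1]/p>0$. The whole inequality will then follow from Lemma~\ref{mainlemmforcr} once I verify its two hypotheses: the local integrability of $\rho^{(\alpha-1)(p-1)}|\nabla\rho|^{p-1}$, $\rho^\beta|\nabla\rho|^p$ and $\rho^{p+\beta}$, and the sign condition $-\Delta_{\mu,p}(c\rho^\alpha)\geq 0$ in the weak sense.

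For the integrability I would use the measure comparisons of Lemma~\ref{volumecompacrions}(2) (in part (i)) and (3) (in part (ii)) to bound $d\mu$ above by $e^{-\Psi(o)+\lambda r}r^{n-1}\,dr\,d\nu_o(y)$, respectively by a constant multiple of $r^{n+4k-1}\,dr\,d\nu_o(y)$, on $M\setminus B_\eta(o)$; together with Lemma~\ref{interglemaGa} this makes each of the three integrals finite on every compact subset of $M_o$ (the integrands being bounded away from $o$, this step is essentially routine). The heart of the matter is the sign condition. A direct computation with $|\nabla r|=1$ gives
\begin{align*}
\Delta_{\mu,p}(\rho^\alpha)=|\alpha|^{p-2}\alpha\, r^{(\alpha-1)(p-1)-1}\left[(\alpha-1)(p-1)+r\Delta_\Psi r\right],
\end{align*}
so that $-\Delta_{\mu,p}(c\rho^\alpha)$ carries the factor $-\mathrm{sign}(c)\,\mathrm{sign}(\alpha)=-1$, times the positive quantity $r^{(\alpha-1)(p-1)-1}$, times $[(\alpha-1)(p-1)+r\Delta_\Psi r]$. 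Here Lemma~\ref{welaplaccompar}(2) gives $r\Delta_\Psi r\leq n-1+\lambda r\leq n-1+\lambda\diam(M)$ in part (i), and Lemma~\ref{welaplaccompar}(3) gives $r\Delta_\Psi r\leq n-1+4k$ in part (ii); in either case the upper constraint $(\alpha-1)(p-1)\leq -[n-1+\lambda\diam(M)]$ (resp.\ $\leq -(n+4k-1)$) forces $(\alpha-1)(p-1)+r\Delta_\Psi r\leq 0$, whence $-\Delta_{\mu,p}(c\rho^\alpha)\geq 0$. Lemma~\ref{mainlemmforcr} then delivers the stated inequality.

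For sharpness, note that taking $\alpha=\frac{p-n}{p-1}\neq 0$ makes $(\alpha-1)(p-1)=1-n$, which saturates the upper constraint and hence forces $\lambda=0$ in part (i) (resp.\ $k=0$ in part (ii)); in particular $\vartheta_{\alpha,\beta,p}=|n+\beta|/p$. With $\lambda=0$ (resp.\ $k=0$) the measure comparison sharpens to $d\mu\leq e^{-\Psi(o)}r^{n-1}\,dr\,d\nu_o(y)$, and the situation reduces exactly to the sharpness half of Theorem~\ref{rhardy1} with $N=n$. I would then check the hypotheses of Definition~\ref{DefDS} (namely $\rho/|\nabla\rho|=r\leq\diam(M)$ and $\rho^{q(p+\beta)}\in L^1_{\loc}(M_o)$) and construct, with $c(\epsilon)=(|n+\beta|+\epsilon)/p$ and $s=\inj_o/2$, the truncated test function $v$ of Lemma~\ref{secondlemmaforsharp}; the two integral conditions there reduce, via the comparison, to the elementary radial integral $\int_0 r^{\epsilon-1}\,dr$ near $o$ and a finite integral over the compact region $M\setminus\overline{B_s(o)}$. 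The main obstacle is purely bookkeeping: confirming that the single inequality $(\alpha-1)(p-1)\leq -[n-1+\lambda\diam(M)]$ is precisely what flips the sign in the $p$-Laplacian computation, and that its equality case is exactly what pins down $\lambda=0$ (resp.\ $k=0$) in the sharp regime.
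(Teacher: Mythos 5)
Your proposal is correct and follows exactly the route the paper intends: the paper gives no separate proof of Theorem \ref{rhardyRicinf} but states that it follows by the same arguments as Theorems \ref{rhardy1}--\ref{logriccnonclosedRicinf}, which is precisely what you carry out (substituting Lemma \ref{welaplaccompar}(2)/(3) into the $p$-Laplacian sign computation and reducing the sharpness claim to the $N=n$ case of Theorem \ref{rhardy1} via Lemma \ref{secondlemmaforsharp}). Your observation that saturating the constraint with $\alpha=\frac{p-n}{p-1}$ forces $\lambda=0$ (resp.\ $k=0$) is a correct and slightly more explicit accounting than the paper provides.
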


In the rest of this section, we study the Brezis-V\'azquez improvement.   Inspired by D'Ambrosio et al. \cite[Theorem 4.1]{DD}, we obtain the following lemma.
\begin{lemma}\label{leBV} Let $(M,o,g,d\mu)$ be an $n$-dimensional closed pointed weighted Riemannian manifold. Given $p\geq 2$, suppose that $\rho\in W^{1,p}_{\loc}(M_o)$ is a nonnegative function such that  $|\nabla\rho|\neq0$ a.e. and $-\Delta_{\mu,p}\rho\geq 0$ in the weak sense.
Set
\[
\Theta:=\inf_{u\in C^\infty_0(M_o)\backslash\{0\}}\frac{\int_{M_o} \rho |\nabla\rho|^{p-2}\, | \nabla u|^2d\mu}{\int_{M_o}\rho |\nabla\rho|^{p-2}\,|u|^2 d\mu}.
\]
Then for any $u\in C^\infty_0(M_o)$, we have
\begin{align*}
\int_{M_o}| \nabla u|^p {d\mu}\geq \left(\frac{p-1}p \right)^p\int_{M_o}  \frac{|u|^p}{\rho^p}|\nabla\rho|^p {d\mu}+\frac{2\Theta}{p}\left(\frac{p-1}p \right)^{p-2}\int_{M_o}\frac{|u|^p}{\rho^{p-2}} |\nabla\rho|^{p-2}{d\mu}.\tag{2.18}\label{impgeneral}
\end{align*}
\end{lemma}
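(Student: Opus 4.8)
The plan is to reduce this $p$-homogeneous estimate to a second-order (effectively $p=2$) computation by the substitution $u=\rho^{(p-1)/p}w$, and to read off the improvement term from the very definition of $\Theta$. The algebraic engine is the elementary pointwise inequality, valid for $p\geq2$ and any two tangent vectors $a,b$ at a point,
\[
|a+b|^p\geq |a|^p+p|a|^{p-2}g(a,b)+\frac{p}{2}|a|^{p-2}|b|^2,
\]
which I would establish first. By $p$-homogeneity one may assume $|a|=1$, split $b$ into its parts parallel and orthogonal to $a$, and minimize the right-hand side over the orthogonal component: the unconstrained minimizer lies on the sphere $|a+b|=|a|$, where equality holds, and the residual one-dimensional cases reduce to the convexity of $t\mapsto|t|^p$. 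This is the only genuinely computational ingredient and it is routine.

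Next, for $u\in C^\infty_0(M_o)$ I would set $w:=\rho^{-(p-1)/p}u$ (working where $\rho>0$), so that $\nabla u=\rho^{(p-1)/p}(a+b)$ with $a:=\frac{p-1}{p}\frac{w}{\rho}\nabla\rho$ and $b:=\nabla w$, whence $|\nabla u|^p=\rho^{p-1}|a+b|^p$. Applying the pointwise inequality and integrating against $d\mu$ splits $\int_{M_o}|\nabla u|^p\,d\mu$ into three contributions. The leading term $\int_{M_o}\rho^{p-1}|a|^p\,d\mu$ equals $(\frac{p-1}{p})^p\int_{M_o}\rho^{-p}|\nabla\rho|^p|u|^p\,d\mu$, which is precisely the Hardy term on the right-hand side of (\ref{impgeneral}).

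For the cross term I would use $|w|^{p-2}w\,g(\nabla\rho,\nabla w)=\frac1p g(\nabla\rho,\nabla(|w|^p))$ to rewrite $\int_{M_o}\rho^{p-1}\,p|a|^{p-2}g(a,b)\,d\mu=(\frac{p-1}{p})^{p-1}\int_{M_o}|\nabla\rho|^{p-2}g(\nabla\rho,\nabla(|w|^p))\,d\mu$, which is nonnegative because $-\Delta_{\mu,p}\rho\geq0$ in the weak sense and $|w|^p\geq0$ is an admissible test function. For the quadratic remainder I would put $\phi:=|w|^{p/2}$, note $|\nabla\phi|^2=\frac{p^2}{4}|w|^{p-2}|\nabla w|^2$, and invoke the definition of $\Theta$: since $\int_{M_o}\rho^{p-1}|a|^{p-2}|b|^2\,d\mu=(\frac{p-1}{p})^{p-2}\frac{4}{p^2}\int_{M_o}\rho|\nabla\rho|^{p-2}|\nabla\phi|^2\,d\mu\geq(\frac{p-1}{p})^{p-2}\frac{4\Theta}{p^2}\int_{M_o}\rho|\nabla\rho|^{p-2}|\phi|^2\,d\mu$ and $\rho|\nabla\rho|^{p-2}|\phi|^2=\rho^{-(p-2)}|\nabla\rho|^{p-2}|u|^p$, the factor $\frac{p}{2}$ from the pointwise inequality produces exactly $\frac{2\Theta}{p}(\frac{p-1}{p})^{p-2}\int_{M_o}\rho^{-(p-2)}|\nabla\rho|^{p-2}|u|^p\,d\mu$. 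Summing the three pieces yields (\ref{impgeneral}).

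The main obstacle is not the algebra but the low regularity of $\rho$: it is only assumed to lie in $W^{1,p}_{\loc}(M_o)$, so neither $|w|^p$ nor $\phi=|w|^{p/2}$ is a priori a legitimate test object in the weak supersolution inequality, respectively in the Rayleigh quotient defining $\Theta$. I would handle this exactly as in Lemma \ref{mainlemmforcr}, via the regularization $\rho_\epsilon:=\rho+\epsilon$ together with a density argument approximating $|w|^p$ and $\phi$ by elements of $C^\infty_0(M_o)$ in the relevant weighted norms, passing to the limit by the dominated convergence theorem; the integrability needed to justify these limits is of the same nature as that verified in Lemma \ref{mainlemmforcr}.
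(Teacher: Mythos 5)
Your proposal is correct and follows essentially the same route as the paper: the paper also substitutes $v=u/\rho_\varepsilon^{(p-1)/p}$, expands $|\nabla u|^2=\alpha-\beta$, and applies the convexity inequality $(\alpha-\beta)^{p/2}\geq\alpha^{p/2}-\tfrac p2\beta\alpha^{p/2-1}$, which is exactly your pointwise inequality $|a+b|^p\geq|a|^p+p|a|^{p-2}g(a,b)+\tfrac p2|a|^{p-2}|b|^2$ in disguise, then disposes of the cross term by the weak supersolution property and of the quadratic term by the definition of $\Theta$, finishing with dominated convergence in $\varepsilon$. The only cosmetic difference is that you derive the pointwise inequality by an orthogonal decomposition of $b$, whereas the paper gets it in one line from the convexity of $t\mapsto t^{p/2}$.
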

\begin{proof}The assumption together with the H\"older inequality implies $\rho |\nabla\rho|^{p-2}\in L^1_{\loc}(M_o)$ and hence, $\Theta$ is well-defined.

Given $u\in C^\infty_0(M_o)$ and $\varepsilon\in (0,1)$, define $\rho_\varepsilon:=\rho+\varepsilon$,  $\gamma:=\frac{p-1}p$ and $v:=u/\rho^\gamma_\varepsilon$. A direct calculation yields
\begin{align*}
|\nabla u|^2=\gamma^2 v^2 {\rho_\varepsilon}^{2(\gamma-1)}|\nabla\rho_\varepsilon|^2+ {{\rho_\varepsilon}^{2\gamma}}|\nabla v|^2+2\gamma v{\rho_\varepsilon}^{2\gamma-1} g(\nabla{\rho_\varepsilon},\nabla v):=\alpha-\beta,
\end{align*}
where $\alpha:=\gamma^2 v^2 {\rho_\varepsilon}^{2(\gamma-1)}|{\nabla\rho_\varepsilon}|^2$ and $\beta:=- {{\rho_\varepsilon}^{2\gamma}}|\nabla v|^2-2\gamma v{\rho_\varepsilon}^{2\gamma-1}g(\nabla{\rho_\varepsilon},\nabla v)$.

Recall $(\alpha-\beta)^s
\geq \alpha^s-s\beta\alpha^{s-1}$ if  $\alpha>0$, $\alpha>\beta$ and $s\geq 1$. By letting $s=p/2$, we have
\begin{align*}
|\nabla u|^p\geq
\gamma^p \frac{|u|^p}{{\rho_\varepsilon}^p}|\nabla{\rho_\varepsilon}|^p+\gamma^{p-1}  |\nabla{\rho_\varepsilon}|^{p-2}g({\nabla\rho_\varepsilon}, \nabla|v|^p)+\frac{2\gamma^{p-2}}{p}{\rho_\varepsilon}  |\nabla{\rho_\varepsilon}|^{p-2}\left| \nabla|v|^{\frac{p}2}  \right|^2,
\end{align*}
which together with $-\Delta_{\mu,p} \rho\geq 0$ yields
\begin{align*}
&\int_{M_o}|\nabla u|^p {d\mu}-\gamma^p\int_{M_o}  \frac{|u|^p}{{\rho_\varepsilon}^p}|\nabla \rho|^p {d\mu}\geq\frac{2\gamma^{p-2}}{p}\int_{M_o}{\rho}\,  |\nabla{\rho}|^{p-2}\,\left( \nabla|v|^{\frac{p}2}  \right)^2{d\mu}\\
\geq &\frac{2\Theta \gamma^{p-2}}{p }\int_{M_o} {\rho}|\nabla{\rho}|^{p-2}|v|^p {d\mu}= \frac{2\Theta \gamma^{p-2}}{p} \int_{M_o} \frac{|u|^p}{\rho_\varepsilon^{p-1}}\rho |\nabla{\rho}|^{p-2} {d\mu}.\tag{2.19}\label{converbv}
\end{align*}
It follows from   D'Ambrosio et al. \cite[Theorem 2.1]{DD} that $|\nabla\rho|/\rho\in L^p_{\lo}(M_o)$. Consequently, we have
\begin{align*}
\left|  \frac{|u|^p}{{\rho_\varepsilon}^p}|\nabla{\rho}|^p  \right|\leq \frac{|u|^p}{{\rho}^p}|\nabla{\rho}|^p\in L^1(M_o),\ \left|\frac{|u|^p}{\rho_\varepsilon^{p-1}}\rho  |\nabla{\rho}|^{p-2}\right|\leq \frac{|u|^p}{\rho^{p-2}}  |\nabla{\rho}|^{p-2}\in L^1(M_o),
\end{align*}
which together with Lebesgue's dominated convergence theorem and (\ref{converbv}) yields (\ref{impgeneral}).\end{proof}

\begin{corollary}\label{cannotachconsta}
 Let $(M,o,g,d\mu)$ be an $n$-dimensional closed pointed weighted Riemannian manifold. Suppose that $\rho\in C^1(M_o)\cap W^{1,p}_{\lo}(M_o)$  is a nonnegative function such that $\rho\neq0$ a.e.,  $\rho/|\nabla\rho|\leq C$ for some positive constant $C$ and $-\Delta_{\mu,p}\rho\geq 0$ in the weak sense. Then for any $p\geq 2$,
 \[
\int_{M_o}| \nabla u|^p {d\mu}\geq \left(\frac{p-1}p \right)^p\int_{M_o}  \frac{|u|^p}{\rho^p}|\nabla\rho|^p {d\mu}, \ \forall\,u\in D^{1,p}(M_o,\rho^0).
\]
In particular, if $\left(\frac{p-1}p \right)^p$ is sharp and $\rho^{\frac{p-1}p}\notin D^{1,p}(M_o,\rho^0)$, then $\left(\frac{p-1}p \right)^p$ cannot be achieved.
\end{corollary}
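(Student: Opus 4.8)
The plan is to separate the inequality itself from the non-achievement claim. First I would check that $\rho$ meets the hypotheses of Lemma~\ref{leBV}: the bound $\rho/|\nabla\rho|\leq C$ together with $\rho\neq 0$ a.e. forces $|\nabla\rho|\neq 0$ a.e., so the improvement inequality (\ref{impgeneral}) holds for every $u\in C^\infty_0(M_o)$. Since $\rho\geq 0$ and $|\nabla\rho|^{p-2}\geq 0$, the Rayleigh-type quotient defining $\Theta$ is a ratio of nonnegative integrals, whence $\Theta\geq 0$; discarding the nonnegative remainder term in (\ref{impgeneral}) then yields $\int_{M_o}|\nabla u|^p\,d\mu\geq\left(\frac{p-1}{p}\right)^p\int_{M_o}\frac{|u|^p}{\rho^p}|\nabla\rho|^p\,d\mu$ for all $u\in C^\infty_0(M_o)$.

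Next I would pass to the completion. Here the relevant weight $\rho^{p+\beta}$ corresponds to $\beta=-p$, so that $p+\beta=0$ and the ambient space is exactly $D^{1,p}(M_o,\rho^0)$. Given $u\in D^{1,p}(M_o,\rho^0)$, Lemma~\ref{controlnormandspace} (applied with $\beta=-p$) furnishes a sequence $u_j\in C^\infty_0(M_o)$ with $\int_{M_o}|\nabla u_j|^p\,d\mu\to\int_{M_o}|\nabla u|^p\,d\mu$ and $\int_{M_o}\frac{|u_j|^p}{\rho^p}|\nabla\rho|^p\,d\mu\to\int_{M_o}\frac{|u|^p}{\rho^p}|\nabla\rho|^p\,d\mu$. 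Passing to the limit in the inequality for $u_j$ extends it to all of $D^{1,p}(M_o,\rho^0)$, which is the first assertion.

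For the sharpness/achievement statement I would argue the contrapositive: assuming $\left(\frac{p-1}{p}\right)^p$ is sharp and achieved, I must exhibit $\rho^{(p-1)/p}\in D^{1,p}(M_o,\rho^0)$. The guiding computation is that $w:=\rho^{(p-1)/p}$ satisfies the pointwise identity $|\nabla w|^p=\left(\frac{p-1}{p}\right)^p\rho^{-p}|w|^p|\nabla\rho|^p$, so $w$ is the formal extremal. To turn a minimizer into $w$ I would revisit the equality case in the proof of Lemma~\ref{leBV}: writing $v:=u/\rho^{(p-1)/p}$, the gap between the two sides of the Hardy inequality equals, up to a positive constant, $\int_{M_o}\rho|\nabla\rho|^{p-2}\bigl|\nabla|v|^{p/2}\bigr|^2\,d\mu$, which vanishes precisely when $|v|$ is a.e. constant, i.e. when $|u|=c\,\rho^{(p-1)/p}$. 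Hence a minimizer $u_0\in D^{1,p}(M_o,\rho^0)\setminus\{0\}$ must satisfy $|u_0|=c\,\rho^{(p-1)/p}$ with $c>0$; since $|u_0|$ lies in $D^{1,p}(M_o,\rho^0)$ with the same $D$-norm as $u_0$, this gives $\rho^{(p-1)/p}\in D^{1,p}(M_o,\rho^0)$, contradicting the hypothesis.

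The main obstacle I foresee is precisely this last step: the equality-case analysis is transparent for $u\in C^\infty_0(M_o)$, but a minimizer is only known to belong to the abstract completion $D^{1,p}(M_o,\rho^0)$. To bridge this I would first extend (\ref{impgeneral}), remainder included, from $C^\infty_0(M_o)$ to $D^{1,p}(M_o,\rho^0)$ by density; the auxiliary integral $\int_{M_o}\frac{|u|^p}{\rho^{p-2}}|\nabla\rho|^{p-2}\,d\mu$ is finite and controllable because $\rho/|\nabla\rho|\leq C$ bounds it by $C^2\int_{M_o}\frac{|u|^p}{\rho^p}|\nabla\rho|^p\,d\mu$, so Lemma~\ref{controlnormandspace} again applies. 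The genuinely delicate point is ensuring that $\int_{M_o}\rho|\nabla\rho|^{p-2}\bigl|\nabla|v|^{p/2}\bigr|^2\,d\mu$ is well defined and truly vanishes for the limiting minimizer, which is what promotes the pointwise rigidity to the Banach-space setting and closes the argument.
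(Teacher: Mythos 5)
Your proposal is correct and follows essentially the same route as the paper: the paper likewise extends the Hardy inequality together with the remainder term $\frac{2\gamma^{p-2}}{p}\int_{M_o}\rho\,|\nabla\rho|^{p-2}\bigl|\nabla|v|^{p/2}\bigr|^2\,d\mu$ from $C^\infty_0(M_o)$ to $D^{1,p}(M_o,\rho^0)$ (via pointwise a.e.\ convergence of the approximating gradients and Fatou's lemma, where you invoke density and Lemma~\ref{controlnormandspace}), and then concludes that a minimizer would force $|u|=c\,\rho^{(p-1)/p}\in D^{1,p}(M_o,\rho^0)$. The only slip is cosmetic: the gap between the two sides is bounded below by, not equal to, that remainder integral, which is all the rigidity argument needs.
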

\begin{proof}
Given $u\in D^{1,p}(M_o,\rho^0)$, there exists a sequence $u_j\in C^\infty_0(M_o)$ converges to $u$ with respect to $\|\cdot\|_D$. By passing a subsequence, we can assume that  $\nabla u_j$ converges to $\nabla u$ pointwise a.e. And Lemma \ref{controlnormandspace} yields $u_j\rightarrow u$ with respect to $\|\cdot\|_L:=(\int_{M_o}|\cdot|^p |\nabla\rho|^p/\rho^p d\mu)^{1/p}$. Since $\rho/|\nabla\rho|\leq C$, we have
\[
\left(\int_{M_o}|u_j-u|^p d\mu\right)^{\frac1p}\leq C \|u_j-u\|_L\rightarrow 0, \text{ as }j\rightarrow \infty.
\]
Hence, by passing a subsequence, we can assume that $u_j$ converges to $u$ pointwise a.e. Given $\varepsilon>0$, set $\rho_\varepsilon:=\rho+\varepsilon$,
 $v_{j,\varepsilon}:=u_j/\rho_\varepsilon^\gamma$ and $v_\varepsilon:=u/\rho_\varepsilon^\gamma$, where $\gamma:=(p-1)/p$. Thus,   Lieb and Loss
\cite[Theorem 6.17]{LL} implies that  $\nabla|v_{j,\varepsilon}|^{p/2}$ converges to $\nabla|v_\varepsilon|^{p/2}$ pointwise a.e.

Repeating the same argument as in the proof of Lemma \ref{leBV} and  Fatou's lemma, we have
\begin{align*}
&\int_{M_o}|\nabla u|^p {d\mu}-\gamma^p\int_{M_o}  \frac{|u|^p}{{\rho_\varepsilon}^p}|\nabla \rho|^p {d\mu}
=\underset{j\rightarrow \infty}{\lim\inf}\left[\int_{M_o}|\nabla u_j|^p {d\mu}-\gamma^p\int_{M_o}  \frac{|u_j|^p}{{\rho_\varepsilon}^p}|\nabla \rho|^p {d\mu}\right]\\
\geq& \underset{j\rightarrow \infty}{\lim\inf}\frac{2\gamma^{p-2}}{p}\int_{M_o}{\rho}\,  |\nabla{\rho}|^{p-2}\,\left( \nabla|v_{j,\varepsilon}|^{\frac{p}2}  \right)^2{d\mu}\geq \frac{2\gamma^{p-2}}{p}\int_{M_o}{\rho}\,  |\nabla{\rho}|^{p-2}\,\left( \nabla|v_\varepsilon|^{\frac{p}2}  \right)^2{d\mu}.
\end{align*}
Using Fatou's lemma and  Lebesgue's dominated convergence theorem again, we have
\begin{align*}
\int_{M_o}|\nabla u|^p {d\mu}-\gamma^p\int_{M_o}  \frac{|u|^p}{{\rho}^p}|\nabla \rho|^p {d\mu}\geq\frac{2\gamma^{p-2}}{p}\int_{M_o}{\rho}\,  |\nabla{\rho}|^{p-2}\,\left( \nabla|v|^{\frac{p}2}  \right)^2{d\mu},
\end{align*}
where $v=u/\rho^\gamma$.
Note that $|u|\in D^{1,p}(M_0,\rho^0)$ (cf. Lieb et al.
\cite[Theorem 6.17]{LL}). So, if $\rho^{\gamma}\notin D^{1,p}(M_o,\rho^0)$, then the above inequality
implies the nonexistence of minimizers in $D^{1,p}(M_o,\rho^0)$.
\end{proof}

By letting $\rho:=r^{\frac{p-n}{p-1}}$, one can derive the following corollary  from Lemma \ref{leBV} and Corollary \ref{cannotachconsta} immediately.
\begin{corollary}\label{rcoro1} Let $(M,o,g,d\mu)$ be an $n$-dimensional closed pointed weighted Riemannian manifold with $\mathbf{Ric}_\infty\geq 0$ and $\partial_r\Psi\geq0$.
 Define $\Theta_r$ as
\[
\Theta_r:=\inf_{u\in C^\infty_0(M_o)\backslash\{0\}}\frac{\int_{M_o} r^{2-n}\, |\nabla u|^2 {d\mu}}{\int_{M_o}r^{2-n}\,|u|^2 {d\mu}}.\tag{2.20}\label{theta1r}
\]
Given $p> n$, for any $u\in C^\infty_0(M_o)$, we have
\begin{align*}
\int_{M_o}|\nabla u|^p {d\mu}\geq& \left(\frac{p-n}p \right)^p\int_{M_o}  \frac{|u|^p}{r^{p}} {d\mu}+\frac{2\Theta_r}{p}\left(\frac{p-n}p \right)^{p-2}\int_{M_o}\frac{|u|^p}{r^{p-2}}{d\mu}.
\end{align*}
In particular, $\left(\frac{p-n}p \right)^p$ is sharp but not achieved.
\end{corollary}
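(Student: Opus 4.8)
The plan is to apply Lemma \ref{leBV} and Corollary \ref{cannotachconsta} to the single choice $\rho := r^{a}$ with $a := \frac{p-n}{p-1}$; note $a>0$ since $p>n$, and $p>n\ge 2$ guarantees the standing requirement $p\ge 2$ of both results. The entire proof then reduces to checking their hypotheses for this $\rho$ and translating the conclusions back into the weights $r^{-p}$, $r^{2-n}$, $r^{2-p}$ appearing in the statement. The one identity that makes every computation collapse is $(a-1)(p-1)=1-n$, which I would record first.

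First I would verify $-\Delta_{\mu,p}\rho\ge 0$ in the weak sense. Since $|\nabla r|=1$, one has $|\nabla\rho|^{p-2}\nabla\rho=a^{p-1}r^{(a-1)(p-1)}\nabla r=a^{p-1}r^{1-n}\nabla r$, so that
\[
\Delta_{\mu,p}\rho=a^{p-1}\,\di_\mu\!\left(r^{1-n}\nabla r\right)=a^{p-1}r^{-n}\left[(1-n)+r\,\Delta_\Psi r\right].
\]
Because $\mathbf{Ric}_\infty\ge 0$ and $\partial_r\Psi\ge 0$, Lemma \ref{welaplaccompar}(2) with $\lambda=0$ gives $\Delta_\Psi r\le \frac{n-1}{r}$ a.e., whence $(1-n)+r\,\Delta_\Psi r\le 0$ and $-\Delta_{\mu,p}\rho\ge 0$. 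As in the proof of Lemma \ref{mainlemmforcr}, this a.e. inequality is promoted to the weak formulation by testing against nonnegative functions (the distributional singular part of the Laplacian of $r$ along the cut locus only helps), so the hypothesis of Lemma \ref{leBV} holds.

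Next I would translate the weights using $(a-1)(p-1)=1-n$: a direct computation yields $|\nabla\rho|^p/\rho^p=a^p r^{-p}$, $|\nabla\rho|^{p-2}/\rho^{p-2}=a^{p-2}r^{2-p}$, and $\rho|\nabla\rho|^{p-2}=a^{p-2}r^{2-n}$. The common factor $a^{p-2}$ cancels in the Rayleigh quotient defining $\Theta$, so $\Theta=\Theta_r$; moreover $\left(\frac{p-1}{p}\right)^p a^p=\left(\frac{p-n}{p}\right)^p$ and $\left(\frac{p-1}{p}\right)^{p-2}a^{p-2}=\left(\frac{p-n}{p}\right)^{p-2}$. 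Substituting these into the conclusion of Lemma \ref{leBV} reproduces verbatim the asserted inequality.

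Finally, for the sharpness-and-non-achievement claim I would invoke Corollary \ref{cannotachconsta}, whose hypotheses hold here ($\rho/|\nabla\rho|=r/a\le \diam(M)/a$, $\rho\neq 0$ a.e., and $-\Delta_{\mu,p}\rho\ge 0$ as above). That $\left(\frac{p-n}{p}\right)^p$ is sharp over $C^\infty_0(M_o)$ is exactly Theorem \ref{rhardyRicinf}(i) with $\lambda=0$, $\beta=-p$, $\alpha=\frac{p-n}{p-1}$: one checks $\vartheta_{\alpha,\beta,p}=\frac{p-n}{p}$ and that the admissibility range $-p+1<1-n\le -(n-1)$ holds precisely because $p>n$. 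The main obstacle is the remaining non-achievement step, namely showing the formal extremal $\rho^{(p-1)/p}=r^{(p-n)/p}$ escapes the space $D^{1,p}(M_o,\rho^0)$. I expect to settle this by a divergence estimate near $o$: since $|\nabla r^{(p-n)/p}|^p=\left(\frac{p-n}{p}\right)^p r^{-n}$ and $d\mu\sim e^{-\Psi}r^{n-1}\,dr\,d\nu_o$, the integrand behaves like $r^{-1}$, so $\int_{M_o}|\nabla r^{(p-n)/p}|^p\,d\mu=\infty$; thus this function has infinite $D$-norm and cannot lie in the completion $D^{1,p}(M_o,\rho^0)$. Corollary \ref{cannotachconsta} then gives that $\left(\frac{p-n}{p}\right)^p$ is not achieved.
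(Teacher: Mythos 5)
Your proposal is correct and follows exactly the route the paper intends: the paper derives this corollary by taking $\rho:=r^{\frac{p-n}{p-1}}$ in Lemma \ref{leBV} and Corollary \ref{cannotachconsta}, and your computations (the identity $(\alpha-1)(p-1)=1-n$, the weight translations, the identification $\Theta=\Theta_r$, the sharpness via Theorem \ref{rhardyRicinf}(i), and the divergence of $\int_{M_o}|\nabla r^{(p-n)/p}|^p\,d\mu$ near $o$) are precisely the details the paper leaves to the reader.
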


\begin{remark}It is usually hard to determine whether $\Theta_r$ is positive. Now we turn to consider a weaker case.
For any $\epsilon\in(0,\inj_o/2)$, set $M_\epsilon:=M\backslash \overline{B_\epsilon(o)}$ and
\[
\Theta_{r,\epsilon}:=\inf_{u\in C^\infty_0(M_\epsilon)\backslash\{0\}}\frac{\int_{M_\epsilon} r^{2-n}\, |\nabla u|^2 {d\mu}}{\int_{M_\epsilon}r^{2-n}\,|u|^2 {d\mu}}.
\]
Then $\Theta_{r,\epsilon}>0$ follows from the standard theory of the Dirichlet eigenvalue problem. Moreover, by repeating the same argument as before, one has
\begin{align*}
\int_{M_\epsilon}|\nabla u|^p {d\mu}\geq& \left(\frac{p-n}p \right)^p\int_{M_\epsilon}  \frac{|u|^p}{r^{p}} {d\mu}+\frac{2\Theta_{r,\epsilon}}{p}\left(\frac{p-n}p \right)^{p-2}\int_{M_\epsilon}\frac{|u|^p}{r^{p-2}}{d\mu},\ \forall \, u\in C^\infty_0(M_\epsilon).
\end{align*}
In particular, $\left(\frac{p-n}p \right)^p$ is sharp but not achieved. Also refer to Yang et al. \cite[Lemma 4.3]{YSK} for the complete noncompact case.

\end{remark}

\section{Hardy type inequalities on closed manifolds}\label{distanceHardy}

\begin{definition}\label{newst2defi}
Let $(M,o,g,d\mu)$ be an $n$-dimensional closed pointed weighted Riemannian manifold. Let $\Omega$ denote either $M$ or $M_o$.
Given $p\in (1,\infty)$ and $\beta\in \mathbb{R}$, let $\rho$ be a nonconstant nonnegative measurable function on $M$ such that $\rho\neq0$ a.e. on $M$ and  $\rho^{{(p+\beta)}/{(1-p)}},\rho^{q(p+\beta)}\in L^1(M)$ for some $q\in (1,\infty)$.
Now we define a weighted $L^p$-norm on $C^\infty_0(\Omega)$ by
\[
\|u\|_{p,\beta}:=\left( \int_{\Omega}|u|^p \rho^{p+\beta} d\mu\right)^{\frac1p}.
\]
And the {\it weighted Sobolev space} $W^{1,p}(\Omega, \rho^{p+\beta})$  is defined as the completion of $C_0^\infty(\Omega)$ with respect to
\[
\|u\|_{{1,p,\beta}}:=\|u\|_{{p,\beta}}+\||\nabla u|\|_{p,\beta}.\tag{3.1}\label{secondnormdefi}
\]
\end{definition}


\begin{lemma}\label{thsob}Let $(M,o,g,d\mu),p,\beta$ and $\rho$ be as in Definition \ref{newst2defi}.
If $u\in W^{1,p}({M}, \rho^{p+\beta})\cap C({M})$ with $u(o)=0$, then $u|_{M_o}\in W^{1,p}({M_o}, {\rho^{p+\beta}})$.
\end{lemma}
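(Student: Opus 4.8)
The plan is to reduce the whole statement to a single truncation argument based on the two integrability hypotheses of Definition \ref{newst2defi}. First I would record the consequences of those hypotheses. Since $\mu(M)<\infty$ and $\rho^{q(p+\beta)}\in L^1(M)$ for some $q>1$, H\"older's inequality gives $\rho^{p+\beta}\in L^1(M)$; in particular $|u|^p\rho^{p+\beta}\in L^1(M)$ for $u\in W^{1,p}(M,\rho^{p+\beta})$. More importantly, the condition $\rho^{(p+\beta)/(1-p)}\in L^1(M)$ is exactly the admissibility condition making the abstract completion a genuine Sobolev space: for any $u$ arising as a $\|\cdot\|_{1,p,\beta}$-limit of $u_j\in C^\infty(M)$, H\"older with exponents $p$ and $p'=p/(p-1)$ yields
\[
\int_M |\nabla u_j-\nabla u_k|\,d\mu\le \Big(\int_M|\nabla u_j-\nabla u_k|^p\rho^{p+\beta}d\mu\Big)^{\frac1p}\Big(\int_M \rho^{\frac{p+\beta}{1-p}}d\mu\Big)^{\frac1{p'}},
\]
and likewise for $u_j-u_k$ themselves. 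Hence the sequence is Cauchy in $W^{1,1}(M)$, so $u\in W^{1,1}(M)$ has a genuine weak gradient and $\nabla u=0$ a.e. on $\{u=0\}$ by the standard level-set property of Sobolev functions. I would also note at the outset that, because $\mu(\{o\})=0$, the norm $\|\cdot\|_{1,p,\beta}$ is the same whether computed over $M$ or $M_o$; thus $W^{1,p}(M_o,\rho^{p+\beta})$ is canonically the closure of $C^\infty_0(M_o)$ inside the Banach space $W^{1,p}(M,\rho^{p+\beta})$, and it suffices to exhibit $u$ as a $\|\cdot\|_{1,p,\beta}$-limit of functions in $C^\infty_0(M_o)$.

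The heart of the argument is the truncation $u^\delta:=\operatorname{sgn}(u)\,(|u|-\delta)_+$ for $\delta>0$. Because $u$ is continuous with $u(o)=0$, the set $\{|u|<\delta\}$ is an \emph{open neighborhood of $o$} on which $u^\delta\equiv 0$; consequently $\operatorname{supp} u^\delta\subseteq\{|u|\ge\delta\}$ is a compact subset of $M_o$ bounded away from $o$. Since the truncation map is Lipschitz and piecewise $C^1$, the chain rule gives $\nabla u^\delta=\mathbf{1}_{\{|u|>\delta\}}\nabla u$, whence $|u^\delta|\le|u|$ and $|\nabla u^\delta|\le|\nabla u|$, so $u^\delta\in W^{1,p}(M,\rho^{p+\beta})$. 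I would then prove $u^\delta\to u$ in $\|\cdot\|_{1,p,\beta}$ as $\delta\to0^+$. For the zeroth-order term, $|u^\delta-u|=\min(|u|,\delta)\to 0$ pointwise and is dominated by $|u|\in L^p(M,\rho^{p+\beta})$, so dominated convergence applies. For the gradient term,
\[
\int_M|\nabla u^\delta-\nabla u|^p\rho^{p+\beta}d\mu=\int_{\{|u|\le\delta\}}|\nabla u|^p\rho^{p+\beta}d\mu\longrightarrow \int_{\{u=0\}}|\nabla u|^p\rho^{p+\beta}d\mu=0,
\]
using continuity of the measure from above together with $\nabla u=0$ a.e. on $\{u=0\}$.

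It remains to check that each $u^\delta$ itself lies in $W^{1,p}(M_o,\rho^{p+\beta})$; combined with the previous step and completeness of that space this forces $u|_{M_o}\in W^{1,p}(M_o,\rho^{p+\beta})$. Here $u^\delta$ is continuous with compact support $K_\delta\subset M_o$, so I would cover $K_\delta$ by finitely many coordinate charts contained in $M_o$, take a subordinate partition of unity, and mollify each piece, or equivalently first approximate $u^\delta$ by globally Lipschitz functions with compact support in $M_o$ and invoke Lemmas \ref{lpschcom} and \ref{controlnormandspace}. The zeroth-order convergence is easy because $u^\delta$ is continuous (uniform convergence of mollifiers, and $\rho^{p+\beta}\in L^1$), while the gradient convergence in $L^p(\rho^{p+\beta})$ is precisely where the local integrability of both $\rho^{p+\beta}$ and $\rho^{(p+\beta)/(1-p)}$ is used.

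The main obstacle, I expect, is exactly this last approximation of the truncated function in the \emph{weighted} norm: since $\rho$ is merely measurable and the weight $\rho^{p+\beta}$ may degenerate or blow up, the $L^p(\rho^{p+\beta})$-convergence of mollified gradients is not automatic and must be driven by the admissibility conditions $\rho^{q(p+\beta)},\rho^{(p+\beta)/(1-p)}\in L^1$ (this is the weighted $H=W$ phenomenon encapsulated in Lemmas \ref{lpschcom}--\ref{controlnormandspace}). A secondary, more routine, point to be careful about is justifying the chain rule and the vanishing of $\nabla u$ on $\{u=0\}$, both of which hinge on the embedding $W^{1,p}(M,\rho^{p+\beta})\hookrightarrow W^{1,1}(M)$ established in the first paragraph.
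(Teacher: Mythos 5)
Your overall strategy is exactly the paper's: truncate at height $\delta$, use continuity together with $u(o)=0$ to push the support of the truncation away from $o$, and let $\delta\to 0^+$ via dominated convergence (the paper first splits $u=u_+-u_-$ using its Lemma \ref{maxminle} and truncates each part with $\max\{u_\pm-\delta,0\}$, which is equivalent to your signed truncation $u^\delta$). Your preliminary observations --- the embedding into $W^{1,1}(M)$ coming from $\rho^{(p+\beta)/(1-p)}\in L^1(M)$, and the vanishing of the weak gradient a.e.\ on $\{u=0\}$ --- are correct and in fact make explicit a point the paper leaves implicit when it asserts that $\int_M 1_{\{0\le u\le\delta\}}|\nabla u|^p\rho^{p+\beta}\,d\mu\to 0$.

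The one step I would not accept as written is the last one: showing $u^\delta|_{M_o}\in W^{1,p}(M_o,\rho^{p+\beta})$. Mollifying the merely continuous function $u^\delta$ in charts gives uniform convergence of the functions but no control of the mollified gradients in $L^p(\rho^{p+\beta})$; that would require $u^\delta$ to lie in an unweighted local Sobolev space of sufficiently high integrability, which you do not know. Likewise, Lemma \ref{lpschcom} applies to globally Lipschitz functions, and $u^\delta$ need not be Lipschitz, nor do you have any mechanism for approximating it by Lipschitz functions in the weighted norm. The fix is much simpler and is what the paper means by ``a standard argument by cut-off functions'': since $u^\delta\in W^{1,p}(M,\rho^{p+\beta})$ and this space is by definition the completion of $C^\infty_0(M)=C^\infty(M)$, there is a sequence $w_j\in C^\infty(M)$ with $\|w_j-u^\delta\|_{1,p,\beta}\to 0$. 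Fix a smooth $\chi$ with $\chi\equiv 0$ near $o$ and $\chi\equiv 1$ outside the neighborhood $B_\eta(o)$ on which $u^\delta$ vanishes. Then $\chi w_j\in C^\infty_0(M_o)$ and $\chi w_j\to\chi u^\delta=u^\delta$ in $\|\cdot\|_{1,p,\beta}$, because the only new term, $|\nabla\chi|\,|w_j-u^\delta|$, is bounded by $\|\nabla\chi\|_\infty$ times $\|w_j-u^\delta\|_{p,\beta}$. With this replacement your argument is complete and coincides with the paper's.
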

\begin{proof}According to Lemma \ref{maxminle}, $u_\pm\in  W^{1,p}({M}, \rho^{p+\beta})\cap C({M})$ with $u_\pm(o)=0$, $u_\pm\geq0$ and $u=u_+-u_-$. Thus,
without loss of generality, we may assume $u\geq 0$ (i.e. $u=u_+$).
Given $\delta\in (0,1)$, set $u_\delta(x):=\max\{u-\delta,0\}$. Since  $u$ is continuous with $u(o)=0$, there exists a small $\eta>0$ such that $u_\delta=0$ in $B_\eta(o)$. Since $u_\delta\in W^{1,p}(M,\rho^{p+\beta})$ (see Lemma \ref{maxminle}), a standard argument by cut-off functions then yields
 $u_\delta|_{{M_o}} \in W^{1,p}({M_o},\rho^{p+\beta})$.
On the other hand,
the dominated convergence theorem furnishes
\begin{align*}
&\left\|u|_{{M_o}}-u_\delta|_{{M_o}}\right\|_{1,p,\beta}=\left\|  u-  u_\delta   \right\|_{1,p,\beta}=\left(\int_{{M}}|u-u_\delta|^p\rho^{p+\beta} {d\mu}\right)^{\frac1p} + \left( \int_{{M}} |\nabla (u-u_\delta)|^p \rho^{p+\beta} {d\mu}\right)^{\frac1p}\\
\leq & \left(\delta^p \int_M \rho^{p+\beta} {d\mu}+\int_{{M}}1_{\{0\leq u\leq \delta\}}|u|^p\rho^{p+\beta} {d\mu}\right)^{\frac1p} + \left(\int_{{M}} 1_{\{0\leq u\leq \delta\}} |\nabla u|^p \rho^{p+\beta} {d\mu}\right)^{\frac1p}\rightarrow 0, \text{ as }\delta\rightarrow 0^+.
\end{align*}
Hence, $u|_{{M_o}}\in  W^{1,p}({M_o},\rho^{p+\beta})$.
\end{proof}


In the sequel, we set $C^\infty(M,o):=\{u\in C^\infty(M):\,u(o)=0\}$.

 \begin{theorem}\label{logriccnonclosed2}
Let $(M,o,g)$ be an $n$-dimensional closed pointed Riemannian manifold with $\mathbf{Ric}_N\geq 0$ for some $N\in[n,\infty)$.  Suppose that $d,p,\beta\in \mathbb{R}$ and $\alpha\in \mathbb{R}\backslash\{0\}$ satisfy
\[
d>\diam(M),\ p\geq N, \ \log\left( \frac{d}{\diam(M)} \right)(N-p)\leq (\alpha-1)(p-1)< \beta+1.
\]
Then we have
\begin{align*}
\int_{{M}} \left[\log\left(\frac{d}{r} \right)\right]^{p+\beta}|\nabla u|^p d\mu\geq (\vartheta_{\alpha,\beta,p})^p\int_{{M}} \left[\log\left(\frac{d}{r} \right)\right]^{\beta} \frac{|u|^p}{r^p} {d\mu},\ \forall\,u\in C^\infty({M,o}),\tag{3.1}\label{logfirstonM}
\end{align*}
 where $\vartheta_{\alpha,\beta,p}:= {[\beta+1-(\alpha-1)(p-1)]}/{p}$.

 In particular, if $p=N=n$ and $\alpha=1$, then $(\vartheta_{\alpha,\beta,p})^p=\left(  \frac{\beta+1}{n}\right)^n$ is sharp with respect to $C^\infty(M,o)$, i.e.,
\[
(\vartheta_{\alpha,\beta,p})^p=\inf_{u\in  C^\infty(M,o)\backslash\{0\}}\frac{\int_{{M}} \left[\log\left(\frac{d}{r} \right)\right]^{p+\beta}|\nabla u|^p d \mu}{\left[\log\left(\frac{d}{r} \right)\right]^{\beta} \frac{|u|^p}{r^p} {d\mu}}.
\]
 \end{theorem}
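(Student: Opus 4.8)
The plan is to deduce (\ref{logfirstonM}) from its punctured counterpart Theorem \ref{logriccnonclosed} by approximation, and then to obtain sharpness from the sharpness already established on $M_o$. Throughout set $\rho:=\log(d/r)$. Since $d>\diam(M)$ we have $r\leq\diam(M)<d$ everywhere, so $\rho$ is positive on $M$, smooth on $M_o$, and blows up only at $o$; moreover $|\nabla\rho|=1/r$, so the right-hand weight is $\rho^\beta|\nabla\rho|^p=\rho^\beta/r^p$, exactly the form treated in Theorem \ref{logriccnonclosed}. Note also that $\vartheta_{\alpha,\beta,p}=[\beta+1-(\alpha-1)(p-1)]/p>0$, because $(\alpha-1)(p-1)<\beta+1$.

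First I would check that every $u\in C^\infty(M,o)$ lies in the weighted Sobolev space $W^{1,p}(M,\rho^{p+\beta})$ of Definition \ref{newst2defi}. As $M$ is closed, $u$ and $|\nabla u|$ are bounded, so finiteness of $\|u\|_{1,p,\beta}$ reduces to $\int_M\rho^{p+\beta}\,d\mu<\infty$; away from $o$ the integrand is bounded, and near $o$ the local volume density is comparable to $r^{n-1}$ (as $\Psi$ is bounded), so the question reduces to $\int_0^s[\log(d/r)]^{p+\beta}r^{n-1}\,dr$, which is finite by (\ref{log1use}) since $n-1>-1$. The same estimate verifies the integrability conditions on $\rho$ in Definition \ref{newst2defi}. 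Since $u\in C(M)$ with $u(o)=0$, Lemma \ref{thsob} then gives $u|_{M_o}\in W^{1,p}(M_o,\rho^{p+\beta})$; that is, there is a sequence $u_j\in C^\infty_0(M_o)$ with $\|u_j-u\|_{1,p,\beta}\to0$.

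Next I would apply (\ref{loginnonclosed}) of Theorem \ref{logriccnonclosed} to each $u_j$ and let $j\to\infty$. On the gradient side, $\||\nabla u_j|\|_{p,\beta}\to\||\nabla u|\|_{p,\beta}$ yields directly $\int_{M_o}\rho^{p+\beta}|\nabla u_j|^p\,d\mu\to\int_M\rho^{p+\beta}|\nabla u|^p\,d\mu$. The right-hand side is the delicate point, and I expect it to be the main obstacle: its weight $\rho^\beta/r^p$ is not the Sobolev weight, so convergence does not follow from $\|u_j-u\|_{1,p,\beta}\to0$ alone. Here I would feed the punctured inequality back on itself: applied to the differences $u_j-u_k\in C^\infty_0(M_o)$ it gives
\[
\int_{M_o}\rho^\beta\frac{|u_j-u_k|^p}{r^p}\,d\mu\leq(\vartheta_{\alpha,\beta,p})^{-p}\int_{M_o}\rho^{p+\beta}|\nabla(u_j-u_k)|^p\,d\mu\longrightarrow0,
\]
so $\{u_j\}$ is Cauchy in the function-side weighted $L^p$-norm, and a.e.\ convergence along a subsequence identifies its limit with $u$. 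This is precisely the content of Lemma \ref{controlnormandspace}, and it yields $\int_{M_o}\rho^\beta|u_j|^p/r^p\,d\mu\to\int_M\rho^\beta|u|^p/r^p\,d\mu$. Passing to the limit in the inequality for $u_j$ then produces (\ref{logfirstonM}).

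Finally, for sharpness when $p=N=n$ and $\alpha=1$, the condition $(\alpha-1)(p-1)=0$ forces $\vartheta_{\alpha,\beta,p}=(\beta+1)/n$, hence $(\vartheta_{\alpha,\beta,p})^p=((\beta+1)/n)^n$. Inequality (\ref{logfirstonM}) bounds the Rayleigh quotient from below by this constant over $C^\infty(M,o)$. For the matching upper bound I would use the inclusion $C^\infty_0(M_o)\subset C^\infty(M,o)$, which makes the infimum over $C^\infty(M,o)$ no larger than that over $C^\infty_0(M_o)$; the latter equals $(\vartheta_{\alpha,\beta,p})^p$ by the sharpness in Theorem \ref{logriccnonclosed}, where the auxiliary hypothesis $\beta\geq p-1$ is unnecessary precisely because $d>\diam(M)$, as recorded in Remark \ref{betaconstan}. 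Combining the two bounds gives the asserted infimum identity.
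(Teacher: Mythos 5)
Your proposal is correct and follows essentially the same route as the paper: both reduce to the punctured Theorem \ref{logriccnonclosed} via Lemma \ref{thsob} and an approximating sequence in $C^\infty_0(M_o)$, and both pass to the limit on the singular-weight side using the content of Lemma \ref{controlnormandspace} (the paper phrases this step with Fatou's lemma, you with a Cauchy argument, which amounts to the same thing). For sharpness, your observation that the inclusion $C^\infty_0(M_o)\subset C^\infty(M,o)$ (via zero extension) immediately transfers the upper bound on the infimum from the punctured case, together with Remark \ref{betaconstan} to drop $\beta\geq p-1$ when $d>\diam(M)$, is just a slightly cleaner packaging of the paper's own argument.
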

\begin{proof}Let $\rho:=\log\left( d/r \right)$.
Since $d>\diam(M)$ and $\sigma_o(r,y)\sim e^{-\Psi(o)}r^{n-1}$ for small $r$, Lemma \ref{interglemaGa} implies $\rho^{{(p+\beta)}/{(1-p)}},\rho^{q(p+\beta)}\in L^1(M)$ for any $q>1$.  In view of the proof of Theorem \ref{logriccnonclosed},
$\rho$ satisfies the assumptions of Lemma \ref{secondlemmaforsharp} and Definition \ref{newst2defi}.

Given $u\in C^\infty(M,o)$, Lemma \ref{thsob} yields  a sequence $u_j\in C^\infty_0(M_o)$ with $\|u_j-u|_{M_o}\|_{1,p,\beta}\rightarrow 0$. Since $\rho\neq0$ on $M_o$, by passing a subsequence, we can assume that
 $u_j$ converges to $u|_{M_o}$ pointwise a.e. on $M_o$.
Now Theorem \ref{logriccnonclosed} together with Fatou's lemma furnishes
\begin{align*}
 &(\vartheta_{\alpha,\beta,p})^p\int_{{M}} |u|^p\rho^{\beta}|\nabla\rho|^p
 {d\mu}=(\vartheta_{\alpha,\beta,p})^p\int_{{M_o}}|u|^p\rho^{\beta}|\nabla\rho|^p {d\mu}\\
  \leq& (\vartheta_{\alpha,\beta,p})^p\underset{j\rightarrow\infty}{\lim\inf}\int_{{M_o}} |u_j|^p\rho^{\beta}|\nabla\rho|^p{d\mu} \leq \underset{j\rightarrow\infty}{\lim\inf}\int_{{M_o}}|\nabla u_j|^p \rho^{p+\beta} d\mu=\int_{{M}} |\nabla u|^p\rho^{p+\beta} d\mu,
\end{align*}
where the last equality follows from $\|u_j-u|_{M_o}\|_{D}\rightarrow 0$. Hence,  (\ref{logfirstonM}) follows.

In order to prove the sharpness, let $v\in D^{1,p}(M_o,\rho^{p+\beta})$ be defined as in Lemma \ref{secondlemmaforsharp} (also see Remark \ref{betaconstan}). Hence, there exists  a sequence $v_j\in C^\infty_0(M_o)$ with $\|v_j- v\|_{D}\rightarrow0$. On one hand, by the zero extension, $v_j$ can be viewed as a function in $C^\infty(M,o)$ and $v$ can defined on $M$. On the other hand, since $o$ is of zero measure,  Lemma \ref{controlnormandspace} then yields
\[
\int_{M} |v_j|^p \rho^\beta |\nabla\rho|^pd\mu\rightarrow \int_{M} |v|^p \rho^\beta |\nabla\rho|^pd\mu,\ \ \ \int_{M} |\nabla v_j|^p\rho^{p+\beta}d\mu\rightarrow \int_{M} |\nabla v|^p\rho^{p+\beta}d\mu.
\]
Thus,  the  remainder of the proof is the same as the one of Lemma \ref{secondlemmaforsharp}.
\end{proof}

\begin{proof}[Proof of Theorem \ref{reverRicinfty}]
Let $\rho:=r$. Since $p+\beta>-n$ and $\sigma_o(r,y)\sim e^{-\Psi(o)}r^{n-1}$ for small $r$, we have $\rho^{{(p+\beta)}/{(1-p)}},\rho^{q(p+\beta)}\in L^1(M)$ for any $q>1$ with $q(p+\beta)>-n$. It follows from the proof of Theorem \ref{rhardy1} that $\rho$ satisfies the  assumptions of Lemma \ref{secondlemmaforsharp} and Definition \ref{newst2defi}. Using Theorem \ref{rhardy1} and the same argument as in the proof of Theorem \ref{logriccnonclosed2}, one can easily complete the proof of this theorem.
\end{proof}

Now we consider  Hardy type inequalities in the context of the $\infty$-Ricci curvature.
 \begin{theorem}\label{logriccnonRicinf2closed}
Let $(M,o,g,d\mu)$ be an $n$-dimensional closed pointed weighted Riemannian manifold with  $\mathbf{Ric}_\infty\geq 0$ and $\diam(M)<d$.

\smallskip

(i) Suppose $\partial_r\Psi\geq -\lambda$ for some $\lambda\geq 0$.
Thus, for any $p,\beta\in \mathbb{R}$ and $\alpha\in \mathbb{R}\backslash\{0\}$ with
\[
   p\geq n+\lambda \diam(M), \  \log\left( \frac{d}{\diam(M)} \right)\left(n+\lambda \diam(M)-p\right)\leq (\alpha-1)(p-1)< \beta+1,
\]
we have
\begin{align*}
\int_{{M}} \left[\log\left(\frac{d}{r} \right)\right]^{p+\beta}|\nabla u|^p  {d\mu}\geq (\vartheta_{\alpha,\beta,p})^p\int_{{M}} \left[\log\left(\frac{d}{r} \right)\right]^{\beta} \frac{|u|^p}{r^p} {d\mu},\ \forall\,u\in C^\infty({M},o),
\end{align*}
 where $\vartheta_{\alpha,\beta,p}:= {[\beta+1-(\alpha-1)(p-1)]}/{p}$.
 In particular, $(\vartheta_{\alpha,\beta,p})^p$ is sharp  with respect to $C^\infty(M,o)$ if
 \[
 \lambda=0,\ n=p,\ \alpha=1.
 \]

 \smallskip

 (ii) Suppose $|\Psi|\leq k$ for some $k\geq 0$.
Thus, for any $p,\beta\in \mathbb{R}$ and $\alpha\in \mathbb{R}\backslash\{0\}$  with
\[
p\geq n+4k, \  \log\left( \frac{d}{\diam(M)} \right)\left(n+4k-p\right)\leq (\alpha-1)(p-1)< \beta+1,
\]
we have
\begin{align*}
\int_{{M}} \left[\log\left(\frac{d}{r} \right)\right]^{p+\beta}|\nabla u|^p d\mu\geq (\vartheta_{\alpha,\beta,p})^p\int_{{M}} \left[\log\left(\frac{d}{r} \right)\right]^{\beta} \frac{|u|^p}{r^p} d\mu,\ \forall\,u\in C^\infty({M},o),
\end{align*}
 where $\vartheta_{\alpha,\beta,p}$ is defined as above. In particular, $(\vartheta_{\alpha,\beta,p})^p$ is sharp if $k=0$, $n=p$ and $\alpha=1$.
 \end{theorem}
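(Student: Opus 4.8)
The plan is to deduce Theorem \ref{logriccnonRicinf2closed} from its punctured counterpart Theorem \ref{logriccnonclosedRicinf} by the very transfer argument used to derive Theorem \ref{logriccnonclosed2} from Theorem \ref{logriccnonclosed}. I would treat (i) and (ii) in parallel, since they differ only in which volume comparison of Lemma \ref{volumecompacrions} is invoked: part (2), with $e^{\lambda r}\leq e^{\lambda\diam(M)}$, for case (i), and part (3), with the exponent $n+4k-1$, for case (ii).

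First I would set $\rho:=\log(d/r)$ and verify that $\rho$ fulfils the hypotheses of Definition \ref{newst2defi}. The point deserving attention is the strict inequality $\diam(M)<d$: it forces $\rho\geq\log(d/\diam(M))>0$ on all of $M$, so that $\rho^{(p+\beta)/(1-p)}$ is harmless near $r=\diam(M)$, and together with the asymptotics $\sigma_o(r,y)\sim e^{-\Psi(o)}r^{n-1}$ as $r\to 0^+$ and Lemma \ref{interglemaGa} this yields $\rho^{(p+\beta)/(1-p)},\rho^{q(p+\beta)}\in L^1(M)$ for a suitable $q>1$. The local integrability conditions required by Lemma \ref{mainlemmforcr} are exactly those already checked inside the proof of Theorem \ref{logriccnonclosedRicinf}, so $\rho$ simultaneously meets the assumptions of Lemma \ref{secondlemmaforsharp}.

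Next, for $u\in C^\infty(M,o)$, I would apply Lemma \ref{thsob} to produce $u_j\in C^\infty_0(M_o)$ with $\|u_j-u|_{M_o}\|_{1,p,\beta}\to 0$, and pass to a subsequence with $u_j\to u|_{M_o}$ pointwise a.e. Applying Theorem \ref{logriccnonclosedRicinf} to each $u_j$, then combining Fatou's lemma applied to the potential term $\int_{M_o}|u_j|^p\rho^\beta|\nabla\rho|^p\,d\mu$ with the $\|\cdot\|_D$-convergence of the gradient term and the fact that $\{o\}$ is a null set, delivers the stated inequality on all of $M$. This is the step that genuinely exploits $u(o)=0$: the bridge Lemma \ref{thsob} is precisely what breaks down for constant functions, which is exactly why the closed-manifold inequality must be confined to $C^\infty(M,o)$.

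For the sharpness claims ($\lambda=0$ in (i), $k=0$ in (ii), and in both cases $n=p$, $\alpha=1$) I would take the near-extremal function $v$ furnished by Lemma \ref{secondlemmaforsharp}, approximate it in the $\|\cdot\|_D$-norm by $v_j\in C^\infty_0(M_o)$, and extend each $v_j$ by zero to view it as an element of $C^\infty(M,o)$. Since $o$ carries no measure, Lemma \ref{controlnormandspace} gives convergence of both the numerator and the denominator of the relevant Rayleigh quotient, and one concludes as in Lemma \ref{secondlemmaforsharp}; here the strict inequality $d>\diam(M)$ is what permits dropping the auxiliary restriction $\beta\geq p-1$ (compare Remark \ref{betaconstan}). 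I do not anticipate a serious obstacle: the whole argument is a transcription of the $\mathbf{Ric}_N$ proof with the two $\mathbf{Ric}_\infty$ comparison estimates substituted in, and the only genuinely delicate point is confirming that $d>\diam(M)$ secures all the integrability hypotheses of Definition \ref{newst2defi}.
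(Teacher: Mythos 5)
Your proposal is correct and follows essentially the same route as the paper: the authors' proof of Theorem \ref{logriccnonRicinf2closed} is precisely ``set $\rho:=\log(d/r)$ and repeat the argument of Theorem \ref{logriccnonclosed2} with Theorem \ref{logriccnonclosedRicinf} in place of Theorem \ref{logriccnonclosed}'', which is exactly the transfer you carry out, including the role of $d>\diam(M)$ in securing the integrability hypotheses and in dropping the restriction $\beta\geq p-1$ for sharpness.
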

\begin{proof}
Let $\rho:=\log(d/r)$. By Theorem \ref{logriccnonclosedRicinf}, the proof is almost the same as that of Theorem \ref{logriccnonclosed2}.
\end{proof}

\begin{proof}[Proof of Theorem \ref{secondrpesenthe}]
Let $\rho:=r$.
In view of Theorem \ref{rhardyRicinf}, one can prove  the statements by a suitable modification to the proof of Theorem \ref{logriccnonclosed2}.
\end{proof}

Furthermore, we  study the Brezis-V\'azquez improvement.

\begin{theorem}\label{thridthe} Let $(M,g,d\mu)$ be an $n$-dimensional closed weighted Riemannian manifold with $\mathbf{Ric}_\infty\geq 0$ and $\partial_r\Psi\geq 0$.
Given $p> n$, for any $u\in C^\infty(M,o)$, we have
\begin{align*}
\int_{M}|\nabla u|^p d\mu\geq& \left(\frac{p-n}p \right)^p\int_{M}  \frac{|u|^p}{r^{p}} d\mu+\frac{2\,\Xi_r}{p}\left(\frac{p-n}p \right)^{p-2}\int_{M}\frac{|u|^p}{r^{p-2}}d\mu,
\end{align*}
where
\[
\Xi_r:=\inf_{u\in C^\infty(M,o)\backslash\{0\}}\frac{\int_{M} r^{2-n}\, |\nabla u|^2 d\mu}{\int_{M}r^{2-n}\,|u|^2 d\mu}.
\]
In particular, $\left(\frac{p-n}p \right)^p$ is sharp but not achieved and $\Xi_r=\Theta_r$, where $\Theta_r$ is defined in (\ref{theta1r}).
\end{theorem}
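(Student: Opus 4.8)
The plan is to deduce the statement from the punctured-manifold improvement of Corollary \ref{rcoro1} by the same approximation scheme used to pass from Theorem \ref{rhardyRicinf} to Theorem \ref{secondrpesenthe} (and from Theorem \ref{logriccnonclosed} to Theorem \ref{logriccnonclosed2}). I take $\rho:=r$ and work in the weighted Sobolev space of Definition \ref{newst2defi} with the trivial weight, that is with $p+\beta=0$ (so $\beta=-p$ and $\rho^{p+\beta}\equiv1$); the integrability requirements $\rho^{(p+\beta)/(1-p)},\rho^{q(p+\beta)}\in L^1(M)$ are automatic since both exponents are $0$ and $M$ is compact. Given $u\in C^\infty(M,o)$ one has $u\in W^{1,p}(M,r^0)\cap C(M)$ with $u(o)=0$, so Lemma \ref{thsob} yields a sequence $u_j\in C^\infty_0(M_o)$ with $\|u_j-u|_{M_o}\|_{1,p,-p}\to0$; hence $u_j\to u$ and $\nabla u_j\to\nabla u$ in $L^p(M_o,d\mu)$, and after passing to a subsequence $u_j\to u$ pointwise a.e.

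I then pass to the limit in Corollary \ref{rcoro1}, which bounds $\int_{M_o}|\nabla u_j|^p\,d\mu$ below by $((p-n)/p)^p\int_{M_o}|u_j|^p r^{-p}\,d\mu$ plus the nonnegative improvement term $\frac{2\Theta_r}{p}((p-n)/p)^{p-2}\int_{M_o}|u_j|^p r^{-(p-2)}\,d\mu$. The left-hand side converges to $\int_M|\nabla u|^p\,d\mu$ (as $\nabla u_j\to\nabla u$ in $L^p$ and $o$ is $\mu$-null), while Fatou's lemma applied to each of the two nonnegative integrands on the right, together with the superadditivity of $\liminf$, produces the asserted inequality with coefficient $\Theta_r$. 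Since $C^\infty_0(M_o)\subset C^\infty(M,o)$ gives $\Xi_r\le\Theta_r$ and the improvement term is nonnegative, the inequality holds a fortiori with $\Xi_r$ in place of $\Theta_r$, which is the claimed estimate.

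The \emph{main obstacle} is the identity $\Xi_r=\Theta_r$, a capacity statement to the effect that deleting the single point $o$ does not change the weighted Rayleigh quotient attached to the weight $r^{2-n}\,d\mu$. Here $\Xi_r\le\Theta_r$ is immediate, so I must show $\Theta_r\le\Xi_r$: given $u\in C^\infty(M,o)$, I would truncate it by a radial cutoff $\chi_\epsilon$ that vanishes on $B_\epsilon(o)$, equals $1$ off $B_{2\epsilon}(o)$ and satisfies $|\nabla\chi_\epsilon|\lesssim\epsilon^{-1}$, so that $\chi_\epsilon u\in C^\infty_0(M_o)$, and prove that $\chi_\epsilon u\to u$ both in $L^2(M,r^{2-n}d\mu)$ and in the corresponding weighted gradient norm. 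Every term is controlled by the $L^1$-integrability of the relevant integrands near $o$ except the crossed term $\int r^{2-n}|u|^2|\nabla\chi_\epsilon|^2\,d\mu$; on the annulus $B_{2\epsilon}(o)\setminus B_\epsilon(o)$ the bounds $|u|\lesssim r$ (valid because $u(o)=0$), $d\mu\sim r^{n-1}\,dr\,d\nu_o$, $r^{2-n}\sim\epsilon^{2-n}$ and $|\nabla\chi_\epsilon|^2\lesssim\epsilon^{-2}$ make this term $O(\epsilon^{2-n}\cdot\epsilon^2\cdot\epsilon^{-2}\cdot\epsilon^n)=O(\epsilon^2)\to0$. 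Consequently the Rayleigh quotients of $\chi_\epsilon u$ converge to that of $u$, giving $\Theta_r\le\Xi_r$ and hence equality.

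For the final assertions, dropping the nonnegative improvement term in the proved inequality gives $\inf_{C^\infty(M,o)\setminus\{0\}}\int|\nabla u|^p d\mu/\int|u|^p r^{-p}d\mu\ge((p-n)/p)^p$, while the inclusion $C^\infty_0(M_o)\subset C^\infty(M,o)$ and the sharpness already recorded in Corollary \ref{rcoro1} give the opposite bound; hence $((p-n)/p)^p$ is sharp. Non-achievement is inherited from Corollary \ref{rcoro1}: the extremal profile $r^{(p-n)/p}=\rho^{(p-1)/p}$ has infinite weighted Dirichlet energy near $o$ (the integral $\int_M|\nabla r^{(p-n)/p}|^p\,d\mu$ behaves like $\int_0 r^{-1}\,dr$), so it lies outside the completion in which the infimum is taken, and the argument of Corollary \ref{cannotachconsta} then shows the constant cannot be attained.
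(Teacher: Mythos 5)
Your proposal is correct, and for the main inequality, the sharpness, and the non-attainment it follows essentially the same route as the paper: pass from Corollary \ref{rcoro1} to $C^\infty(M,o)$ via the approximating sequence supplied by Lemma \ref{thsob} (Fatou on the two nonnegative right-hand terms, norm convergence on the left), and rule out minimizers by combining Lemma \ref{thsob} with Remark \ref{newimoprremar} and Corollary \ref{cannotachconsta}, using that $r^{(p-n)/p}=\rho^{(p-1)/p}\notin D^{1,p}(M_o,\rho^0)$. The one genuine divergence is the identity $\Xi_r=\Theta_r$. The paper again recycles the density statement: given $u\in C^\infty(M,o)$ it takes the $W^{1,p}$-approximating sequence $u_j\in C^\infty_0(M_o)$ from Lemma \ref{thsob} and shows, via H\"older together with the volume comparison (\ref{standardvolumecomparison2}) and the exponent computation $\frac{p(2-n)}{p-2}+n-1>-1$ (equivalent to $p>n$), that $L^p(d\mu)$-convergence of $u_j$ and $\nabla u_j$ forces convergence of both weighted $L^2$ quantities, hence of the Rayleigh quotients. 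You instead give a direct capacity-type cutoff argument, with the annulus estimate $\int r^{2-n}|u|^2|\nabla\chi_\epsilon|^2\,d\mu=O(\epsilon^2)$ resting on $|u|\lesssim r$ near $o$. Both are valid; the paper's version buys uniformity with the rest of the machinery (no new construction, and it is exactly where the hypothesis $p>n$ enters), while yours is more elementary, self-contained, and makes transparent that the point $o$ has zero capacity for the weight $r^{2-n}d\mu$. One small presentational point: in the non-attainment step, the relevant fact is not merely that the extremal profile lies outside $D^{1,p}(M_o,\rho^0)$, but that any hypothetical smooth minimizer restricts into $D^{1,p}(M_o,\rho^0)$ (via Lemma \ref{thsob} and Remark \ref{newimoprremar}) where Corollary \ref{cannotachconsta} forbids minimizers; your earlier approximation step supplies this, so the argument closes, but it is worth saying explicitly.
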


\begin{proof} Set $\rho:=r^{\frac{p-n}{p-1}}$.  Corollary \ref{rcoro1} together with the same argument as in the proof of Theorem \ref{logriccnonclosed2}
yields
\begin{align*}
\int_{M}|\nabla u|^p d\mu\geq& \left(\frac{p-n}p \right)^p\int_{M}  \frac{|u|^p}{r^{p}} d\mu+\frac{2\Theta_r}{p}\left(\frac{p-n}p \right)^{p-2}\int_{M}\frac{|u|^p}{r^{p-2}}d\mu,\ \forall\,u\in C^\infty(M,o).
\end{align*}
 In particular, it follows from Theorem \ref{secondrpesenthe} that $\left[({p-n})/p \right]^p$ is sharp. We claim that  $\left[({p-n})/p \right]^p$ cannot be achieved.
If not, there would be a function $u\in C^\infty(M,o)\backslash\{0\}$ such that
\begin{align*}
\int_{M_o}|\nabla u|^p d\mu=\int_{M}|\nabla u|^p d\mu= \left(\frac{p-n}p \right)^p\int_{M}  \frac{|u|^p}{r^{p}} d\mu=\left(\frac{p-n}p \right)^p\int_{M_o}  \frac{|u|^p}{r^{p}} d\mu.\tag{3.2}\label{new3.3contar}
\end{align*}
Lemma \ref{thsob} and Remark \ref{newimoprremar} imply $u|_{M_o}\in W^{1,p}(M_o,\rho^0)=D^{1,p}(M_o,\rho^0)$. But  (\ref{new3.3contar}) then contradicts with Corollary \ref{cannotachconsta}, since $r^{\frac{p-n}p}=\rho^{\frac{p-1}{p}}\notin D^{1,p}(M_o,\rho^0)$.

We concludes the proof by showing $\Xi_r=\Theta_r$.
It suffices to show $\Xi_r\geq\Theta_r$. Given $u\in C^\infty(M,o)$, Lemma \ref{thsob} yields $u|_{M_o}\in W^{1,p}(M_o,\rho^0)$ and hence, there is a sequence $u_j\in C^\infty_0(M_o)$ with
\[
\left(\int_M |u_j-u|^pd\mu\right)^{\frac1p}+\left(\int_M |\nabla u_j- \nabla u|^pd\mu\right)^{\frac1p}=\|u_j-u|_{M_o}\|_{1,p,\beta}\rightarrow 0,\text{  as }j\rightarrow \infty.\tag{3.3}\label{exprectring}
\]
The H\"older inequality together with (\ref{standardvolumecomparison2}) furnishes
\begin{align*}
&\int_{M}|f|^2 r^{2-n} d\mu\leq \left(  \int_M |f|^p d\mu \right)^{\frac2p}\left(  \int_M r^{\frac{p(2-n)}{p-2}} d\mu\right)^{\frac{p-2}{p}}\\
\leq& \left(  \int_M |f|^p d\mu \right)^{\frac2p}\left(c_{n-1} e^{-\Psi(o)} \int_0^{\diam(M)} r^{\frac{p(2-n)}{p-2}+n-1} dr\right)^{\frac{p-2}{p}}<\infty, \ \forall\, f\in C^\infty(M,o),
\end{align*}
which together with (\ref{exprectring}) yields
\[
\int_{M} r^{2-n}\, |\nabla u|^2 d\mu=\lim_{j\rightarrow\infty}\int_{M} r^{2-n}\, |\nabla u_j|^2 d\mu, \ \int_{M}r^{2-n}\,|u|^2 d\mu=\lim_{j\rightarrow\infty}\int_{M}r^{2-n}\,|u_j|^2 d\mu.
\]
Therefore, $\Xi_r\geq {\Theta}_r$.
\end{proof}

\begin{proof}[Proof of Theorem \ref{newharyineqtype}]
Note that $u-u(o)\in C^\infty(M,o)$, for any $u\in C^\infty(M)$. Hence, Theorem \ref{newharyineqtype} is a direct consequence of Theorem \ref{thridthe}.
\end{proof}

Moreover, we have a uncertainty principle inequality by Theorem \ref{thridthe} and the H\"older inequality.
\begin{corollary}
 Let $(M,o,g,d\mu)$ be an $n$-dimensional closed pointed weighted Riemannian manifold with  $\mathbf{Ric}_\infty\geq 0$ and $\partial_r\Psi\geq 0$. For any $p> n$, we have
\[
\left(\int_M r^q |u|^q {d\mu}\right)^{\frac1q}\left( \int_M   |\nabla u|^p  {d\mu} \right)^{\frac1p} \geq \left(\frac{p-n}p \right)\int_{M} u^2 {d\mu},\ \forall\,u\in C^\infty(M,o),
\]
where $1/p+1/q=1$.
\end{corollary}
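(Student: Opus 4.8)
The plan is to combine the plain Hardy inequality contained in Theorem \ref{thridthe} with a single application of the H\"older inequality. First I would discard the nonnegative Brezis--V\'azquez remainder term in Theorem \ref{thridthe}: since $\Xi_r\geq 0$ (it is an infimum of a ratio of nonnegative integrals), $p>n$, and $\int_M |u|^p r^{2-p}\,d\mu\geq 0$, the conclusion of Theorem \ref{thridthe} yields
\[
\int_M|\nabla u|^p\,d\mu\geq\left(\frac{p-n}{p}\right)^p\int_M\frac{|u|^p}{r^p}\,d\mu,\qquad\forall\,u\in C^\infty(M,o),
\]
equivalently, after taking $p$-th roots,
\[
\left(\int_M\frac{|u|^p}{r^p}\,d\mu\right)^{1/p}\leq\frac{p}{p-n}\left(\int_M|\nabla u|^p\,d\mu\right)^{1/p}.
\]

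The second step is to bound $\int_M u^2\,d\mu$ from above by H\"older's inequality, splitting the integrand into a product adapted to the conjugate exponents $q$ and $p$. Writing $u^2=(r|u|)\cdot(|u|/r)$ and using $1/p+1/q=1$, one obtains
\[
\int_M u^2\,d\mu=\int_M (r|u|)\cdot\frac{|u|}{r}\,d\mu\leq\left(\int_M r^q|u|^q\,d\mu\right)^{1/q}\left(\int_M\frac{|u|^p}{r^p}\,d\mu\right)^{1/p}.
\]
All the integrals involved are finite: $\int_M r^q|u|^q\,d\mu<\infty$ because $r$ is bounded on the compact manifold $M$ and $u$ is smooth, while $\int_M |u|^p r^{-p}\,d\mu<\infty$ is precisely the quantity already controlled in Theorem \ref{thridthe}.

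Finally I would substitute the Hardy bound from the first step into the second factor on the right-hand side of the H\"older estimate, obtaining
\[
\int_M u^2\,d\mu\leq\frac{p}{p-n}\left(\int_M r^q|u|^q\,d\mu\right)^{1/q}\left(\int_M|\nabla u|^p\,d\mu\right)^{1/p},
\]
and then rearrange to reach the claimed inequality. There is essentially no serious obstacle here; the only point demanding a moment's care is the precise choice of the H\"older splitting $u^2=(r|u|)(|u|/r)$, which must be arranged so that the two factors reproduce exactly the weighted norm $\int_M r^q|u|^q\,d\mu$ and the Hardy weight $|u|^p/r^p$, respectively, so that the Hardy inequality of Theorem \ref{thridthe} applies verbatim to the second factor.
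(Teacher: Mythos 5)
Your proposal is correct and follows exactly the route the paper intends: the corollary is stated as an immediate consequence of Theorem \ref{thridthe} (after dropping the nonnegative remainder term) combined with H\"older's inequality applied to the splitting $u^2=(r|u|)\cdot(|u|/r)$ with conjugate exponents $q$ and $p$. The paper gives no further details, and your write-up supplies precisely the computation it leaves implicit.
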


\section{Hardy type inequalities on the unit sphere}\label{exapmlesec}
Let $(\mathbb{S}^n,g)$ be an $n$-dimensional unit sphere  equipped with the canonical Riemannian metric and let  $o\in \mathbb{S}^n$ be a point. Set $r(x):=\text{dist}(o,x)$ for $x\in \mathbb{S}^n$. In this section, we prove the following result.

\begin{proposition}\label{unitsphereexamp}
Given $p>n$, we have
\begin{align*}
\int_{\mathbb{S}^{n}}|\nabla u|^p\dvol_g\geq \left( \frac{p-n}{p} \right)^p\int_{\mathbb{S}^{n}}  \frac{|u|^p}{{r}^p}\dvol_g,\ \forall\,u\in C^\infty(\mathbb{S}^{n},o).\tag{4.1}\label{Hardynosphere}
\end{align*}
In particular, $\left( \frac{p-n}{p} \right)^p$ is sharp.
\end{proposition}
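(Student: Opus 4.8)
The quickest route is to observe that the canonical sphere $(\mathbb{S}^n,g)$ has $\mathbf{Ric}=(n-1)g\geq 0$, so with any constant weight $\Psi$ one has $\mathbf{Ric}_n=\mathbf{Ric}\geq 0$, and since $\Psi$ is constant the factor $e^{-\Psi}$ cancels from both sides of (\ref{Hardynosphere}); thus (\ref{Hardynosphere}) is exactly the case $N=n$, $\beta=-p$ of Theorem \ref{reverRicinfty}, for which $p>n=N$, $\beta=-p<-N$ and $p+\beta=0>-n$, the constant becomes $(|N+\beta|/p)^p=((p-n)/p)^p$, and the sharpness is the $N=n$ assertion there. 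Since this is the example section, however, I would prefer a self-contained argument that exhibits the geometry explicitly.

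For the direct proof I would work in polar coordinates around $o$, where $g=dr^2+\sin^2 r\,g_{\mathbb{S}^{n-1}}$ on $r\in(0,\pi)$, $\dvol_g=\sin^{n-1}r\,dr\,d\nu_o(y)$, and $|\nabla u|^2\geq(\partial_r u)^2$, so it suffices to bound $\int_{\mathbb{S}^n}|\partial_r u|^p\dvol_g$ from below; by Fubini this amounts to the one-dimensional weighted Hardy inequality along each geodesic with weight $\sin^{n-1}r$ and boundary value $u(o)=0$. The engine is the spherical Laplacian comparison $\Delta r=(n-1)\cot r\leq (n-1)/r$ on $(0,\pi)$---equivalently the monotonicity of $\sin r/r$, which is Lemma \ref{volumecompacrions}(1) in the present $N=n$ case. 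Concretely I would run the divergence argument of Lemma \ref{mainlemmforcr} with $\rho=r$ and $\alpha=(p-n)/(p-1)$, for which
\[
-\Delta_{\mu,p}(c\rho^\alpha)=|\alpha|^p\,r^{(\alpha-1)(p-1)-1}\,(\beta+n)\,(1-n+r\Delta_\Psi r)\geq 0,
\]
since $\beta+n=n-p<0$ and $1-n+r\Delta_\Psi r\leq 0$; this yields (\ref{Hardynosphere}) for every $u\in C^\infty_0(\mathbb{S}^n\setminus\{o\})$.

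To pass from the punctured sphere to all of $C^\infty(\mathbb{S}^n,o)$ I would invoke the density mechanism behind Theorem \ref{reverRicinfty}: by Lemma \ref{thsob} the restriction of any $u\in C^\infty(\mathbb{S}^n,o)$ lies in the weighted Sobolev space on $\mathbb{S}^n\setminus\{o\}$ and is approximated there by $C^\infty_0(\mathbb{S}^n\setminus\{o\})$ functions, whence Fatou's lemma transfers the inequality. For the sharpness I would test with the family of Lemma \ref{secondlemmaforsharp}, functions behaving like $r^{c(\epsilon)}$ near $o$ with $c(\epsilon)\downarrow (p-n)/p>0$, the relevant integrals converging because $\sigma_o(r,y)\sim e^{-\Psi(o)}r^{n-1}$ as $r\to 0^+$.

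The step needing the most care is the genuine spherical weight $\sin^{n-1}r$ in place of $r^{n-1}$: the crude bound $\sin r\leq r$ is not what produces the sharp constant, rather it is the monotonicity of $\sin r/r$ (equivalently $r\cot r\leq 1$), and the crux is to feed this into the divergence identity so that the error term carries the correct sign. Once that is in place, the extension to $C^\infty(\mathbb{S}^n,o)$ and the sharpness follow routinely from Lemmas \ref{thsob}, \ref{mainlemmforcr} and \ref{secondlemmaforsharp}.
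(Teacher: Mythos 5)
Your argument is correct, but it takes a genuinely different route from the paper's own proof of Proposition \ref{unitsphereexamp}. Your ``quickest route'' is legitimate: with $\Psi$ constant, $\mathbf{Ric}_n=\mathbf{Ric}=(n-1)g\geq 0$, and (\ref{Hardynosphere}) is exactly Theorem \ref{reverRicinfty} with $N=n$, $\beta=-p$ (all hypotheses $p>n=N$, $\beta<-N$, $p+\beta=0>-n$ hold, and the sharpness is the $N=n$ clause there); there is no circularity, since Theorem \ref{reverRicinfty} is proved in Section \ref{distanceHardy} independently of Section \ref{exapmlesec}. Your ``direct'' version is then essentially a re-run of that general machinery on the sphere: $-\Delta_{\mu,p}(c\rho^\alpha)\geq 0$ for $\rho=r$, $\alpha=(p-n)/(p-1)$ via $r\cot r\leq 1$, Lemma \ref{mainlemmforcr} on the punctured sphere, the Lemma \ref{thsob}/Fatou density transfer to $C^\infty(\mathbb{S}^n,o)$, and Lemma \ref{secondlemmaforsharp} for sharpness. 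The paper instead gives a self-contained computation that never passes through $C^\infty_0(\mathbb{S}^n\setminus\{o\})$ or any approximation: it takes $\rho=r^{(p-n)/(p-1)}$, uses the pointwise convexity inequality from Lemma \ref{leBV} to write $|\nabla u|^p\geq \gamma^p|u|^p|\nabla\rho|^p/\rho^p+\gamma^{p-1}|\nabla\rho|^{p-2}g(\nabla\rho,\nabla v^p)$ with $v=u/\rho^\gamma$, and then integrates the cross term by the divergence theorem on $\mathbb{S}^n\setminus B_\eta(o)$, controlling the boundary term on $S_\eta(o)$ and the local integrability by the Taylor expansion of $u$ at $o$ (this is precisely where $u(o)=0$ enters, replacing your density step). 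What each approach buys: yours is shorter and exhibits the proposition as a corollary of the general theorems; the paper's is self-contained for the example section, makes the spherical geometry explicit through the factor $1-r\cot r\geq 0$, and shows how to handle the point singularity by hand. Two minor remarks on your write-up: the polar-coordinate/Fubini reduction to a one-dimensional Hardy inequality is announced but then abandoned in favour of the divergence argument, so it should either be carried out or dropped; and $\rho=r$ is only Lipschitz (not $C^1$) at the antipode of $o$, a measure-zero issue that the paper also glosses over in Theorem \ref{rhardy1}, so it is not a gap specific to your argument.
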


\begin{proof}
For convenience, we use $\Delta_p$ and $\di$ to denote the $p$-Laplacian and the divergence operators with respect to $\vol_g$, that is,
\[
\Delta_p:=\Delta_{\vol_g,p},\ \di:=\di_{\vol_g}.
\]

We first consider the case when $p$ is even.
Set $\rho:=r^{\frac{p-n}{p-1}}$.
Thus,
\[
-\Delta_p \rho=(n-1)\left(\frac{p-n}{p-1}\right)^{p-1}r^{-n}(1-r\cot r)\geq 0, \text{ on }\mathbb{S}^n_o:=\mathbb{S}^n\backslash\{o\}.\tag{4.2}\label{plalacianonS}
\]
Given $u\in C^\infty(\mathbb{S}^n,o)$, set $v:=u/\rho^\gamma$, where $\gamma:=\frac{p-1}{p}$.
 Since $p$ is even, one has
\begin{align*}
v^p=|v|^p=\frac{u^p}{r^{p-n}}.
\end{align*}
The Taylor expansion of $u$ about $o$ furnishes $\lim_{x\rightarrow o}v^p(x)=0$ and for a small $\epsilon>0$,
\[
|\nabla v^p|\leq {C_1}r^{n-1}, \text{ for }0<r<\epsilon,
\]
where $C_1=C_1(p,n,\nabla u(o))$ is a positive constant only dependent on $p$, $n$ and $\nabla u(o)$.
Hence,
\[
\left||\nabla\rho|^{p-2}g(\nabla\rho,\nabla v^p)\right|\leq |\nabla\rho|^{p-1} |\nabla v^p|\leq \left( \frac{p-n}{p-1} \right)^{p-1} C_1=:C_2,  \text{ for }0<r<\epsilon,
\]
which means $|\nabla\rho|^{p-2}g(\nabla\rho,\nabla v^p)\in L^1(\mathbb{S}^n)$. Similarly, one can show $\frac{|u|^p}{{\rho}^p}|\nabla{\rho}|^p\in L^1(\mathbb{S}^n)$ by the Taylor expansion.
Moreover, a direct calculation yields
\begin{align*}
&\lim_{\eta\rightarrow0^+}\left|\int_{B_\eta(o)}|\nabla\rho|^{p-2}g(\nabla\rho,\nabla v^p)\dvol_g\right|\leq \lim_{\eta\rightarrow0^+}C_2 \vol_g\left( B_\eta(o) \right)=0,\tag{4.3}\label{smallballto0}\\
&\lim_{\eta\rightarrow0^+}\left|\int_{S_\eta(o)}v^p g(|\nabla\rho|^{p-2}\nabla\rho,\nabla r)dA\right|\leq c_{n-1}\left( \frac{p-n}{p-1} \right)^{p-1}\lim_{\eta\rightarrow0^+}\left[\left( \frac{\sin \eta}{\eta}\right)^{n-1}\max_{S_\eta(o)}\left|v^p\right|\right]=0,\tag{4.4}\label{divegencelemmaonS}
\end{align*}
where $S_\eta(o):=\partial B_\eta(o)$ and $dA$ is the area measure on $S_\eta(o)$ induced by $\dvol_g$.
On one hand, (\ref{plalacianonS}) implies
\[
|\nabla\rho|^{p-2}g(\nabla\rho,\nabla v^p)=\di\left(v^p|\nabla\rho|^{p-2}\nabla\rho\right)-v^p\Delta_p \rho\geq \di\left(v^p|\nabla\rho|^{p-2}\nabla\rho\right), \text{ on }\mathbb{S}^n_o,
\]
which together with (\ref{smallballto0}) and (\ref{divegencelemmaonS})  yields
\begin{align*}
\infty&>\int_{\mathbb{S}^n}|\nabla\rho|^{p-2}g(\nabla\rho,\nabla v^p)\dvol_g= \lim_{\eta\rightarrow0^+}\left(\int_{\mathbb{S}^n\backslash B_\eta(o)} +\int_{B_\eta(o)}\right) \\
\geq &\lim_{\eta\rightarrow0^+}\left(\int_{\mathbb{S}^n\backslash B_\eta(o)} \di\left(v^p|\nabla\rho|^{p-2}\nabla\rho\right) \dvol_g+\int_{B_\eta(o)}|\nabla\rho|^{p-2}g(\nabla\rho,\nabla v^p)\dvol_g\right)\\
=&\lim_{\eta\rightarrow0^+}\int_{S_\eta(o)}v^p g(|\nabla\rho|^{p-2}\nabla\rho,\nabla r)dA+\lim_{\eta\rightarrow0^+}\int_{B_\eta(o)}|\nabla\rho|^{p-2}g(\nabla\rho,\nabla v^p)\dvol_g=0.\tag{4.5}\label{nonnegiatve}
\end{align*}
On the other hand, the same argument as in Lemma  \ref{leBV} yields
\begin{align*}
|\nabla u|^p\geq \gamma^p \frac{|u|^p}{{\rho}^p}|\nabla{\rho}|^p+\gamma^{p-1}  |\nabla{\rho}|^{p-2}g(\nabla{\rho}, \nabla v^p), \text{ on }\mathbb{S}^{n}_o.\tag{4.6}\label{inequalitysee}
\end{align*}
Since every term on both sides of (\ref{inequalitysee}) is in $L^1(\mathbb{S}^n)$ and $o$ is of zero measure, by integrating (\ref{inequalitysee}) over $\mathbb{S}^n$ and using (\ref{nonnegiatve}), we have
\begin{align*}
\int_{\mathbb{S}^{n}}|\nabla u|^p\dvol_g\geq \left( \frac{p-1}{p} \right)^p\int_{\mathbb{S}^{n}}  \frac{|u|^p}{{\rho}^p}|\nabla{\rho}|^p\dvol_g. \tag{4.7}\label{finhardyonsphere}
\end{align*}

By a similar but more tedious argument, one can check that
all the estimates above remain valid even if $p$ is not even. Hence, (\ref{finhardyonsphere}) holds for all $p>n$.
By repeating the proof of Lemma \ref{secondlemmaforsharp}, one can show that sharpness of (\ref{finhardyonsphere}).
\end{proof}

\appendix
\section{}\label{app}

\begin{lemma}\label{lpschcom}
Let $(M,o,g,d\mu)$, $p,\alpha,\beta$ and $\rho$  be as in Definition \ref{DefDS}. Then for any globally Lipschitz function $u$ on $M$  with compact support in $M_o$, we have $u\in D^{1,p}(M_o,\rho^{p+\beta})$.
\end{lemma}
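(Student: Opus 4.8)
The plan is to reduce the weighted approximation problem to the classical unweighted one, exploiting the local integrability of the weight $\rho^{p+\beta}$ recorded in Definition \ref{DefDS}. First I would fix the geometric setup. Since $u$ is globally Lipschitz on the compact manifold $M$, Rademacher's theorem gives $|\nabla u|\in L^\infty(M)$, with bound controlled by the Lipschitz constant of $u$. Let $K$ denote the (compact) support of $u$; because $K\subset M_o$ is compact, we have $\delta:=\text{dist}(o,K)>0$. I then fix a compact set $U'\subset M_o$ with $K\subset\text{int}(U')$ (for instance the closed $\delta/2$-neighborhood of $K$), so that $o\notin U'$.

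Next I would produce the smoothing sequence by standard mollification performed entirely away from $o$. Covering $U'$ by finitely many coordinate charts and using a subordinate partition of unity, mollification yields functions $v_j\in C^\infty_0(M_o)$ with $\text{supp}(v_j)\subset U'$, a uniform gradient bound $\sup_j\||\nabla v_j|\|_{L^\infty}=:L<\infty$ (controlled by the Lipschitz constant of $u$ together with the chart metrics on the compact region $U'$), and $\nabla v_j\to\nabla u$ in $L^p(U',d\mu)$. Passing to a subsequence, we may also assume $\nabla v_j\to\nabla u$ pointwise a.e. Since both $v_j$ and $u$ vanish outside $U'$, the difference $\nabla(v_j-u)$ is supported in $U'$, is bounded by $2L$, and tends to $0$ a.e.

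The core of the argument is the passage from unweighted to weighted convergence, and this is where the possibly singular weight must be handled with care: when $p+\beta<0$, the factor $\rho^{p+\beta}$ may blow up wherever $\rho$ is small on $U'$, so a crude supremum bound on $\rho^{p+\beta}$ is unavailable. Instead I would invoke the hypothesis that $\rho^{q(p+\beta)}\in L^1_{\loc}(M_o)$ for some $q>1$; since $U'$ is compact in $M_o$, the constant $C_0:=(\int_{U'}\rho^{q(p+\beta)}\,d\mu)^{1/q}$ is finite. With $q':=q/(q-1)$, Hölder's inequality gives
\[
\|v_j-u\|_D^p=\int_{U'}|\nabla(v_j-u)|^p\rho^{p+\beta}\,d\mu\leq C_0\left(\int_{U'}|\nabla(v_j-u)|^{pq'}\,d\mu\right)^{1/q'}.
\]
The integrand in the last factor is dominated by the integrable function $(2L)^{pq'}\mathbf{1}_{U'}$ (recall $\mu(U')<\infty$) and converges to $0$ a.e., so Lebesgue's dominated convergence theorem forces the right-hand side to $0$. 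Hence $\|v_j-u\|_D\to0$ with $v_j\in C^\infty_0(M_o)$, which is precisely the statement $u\in D^{1,p}(M_o,\rho^{p+\beta})$.

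I expect the main obstacle to be exactly this last step, namely controlling the singular weight: the mollification is routine, but one must not lose the uniform gradient bound on the $v_j$, since it is this bound, combined with the $L^q$-integrability of $\rho^{p+\beta}$ over $U'$, that makes the dominated-convergence argument succeed. (The remaining hypotheses of Definition \ref{DefDS}, such as $\rho/|\nabla\rho|\leq C$, are not needed here.)
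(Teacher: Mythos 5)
Your proposal is correct and follows essentially the same route as the paper: localize on a compact neighborhood of the support away from $o$, approximate by smooth functions via charts and a partition of unity, and then split off the weight with H\"older's inequality using the exponent $q$ from Definition \ref{DefDS} so that $\rho^{q(p+\beta)}\in L^1_{\loc}(M_o)$ controls one factor while the gradient difference is measured in $L^{pq'}$. The only cosmetic difference is that the paper obtains the $L^{pq'}$ convergence of gradients from the Meyers--Serrin theorem applied to each $\eta_k u$ in $W^{1,pq'}(\mathbb{B}_{\mathbf{0}}(1))$, whereas you derive it from the uniform gradient bound of the mollifications plus dominated convergence; both are standard and equally valid.
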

\begin{proof}
Since $\text{supp}(u)$ is compact, there exist  a coordinate covering   $\{(U_k,\phi_k)\}_{k=1}^{N<\infty}$ of $\text{supp}(u)$   such that    $U_k\subset\subset M_o$ and $\phi_k(U_k)=\mathbb{B}_{\mathbf{0}}(1)\subset \mathbb{R}^n$  for each $k\in \{1,\ldots,N\}$. We can also assume that $K:=\cup_{k}\overline{U_k}$ is a compact subset in $M_o$.

Let $\{\eta_k\}_{k=1}^N$ be a smooth partition of unity subordinate to $\{U_k\}_{k=1}^N$. Thus,  $(\eta_ku)\circ\phi_k^{-1}$ is a globally Lipschitz function on $\mathbb{B}_{\mathbf{0}}(1)$ with respect to the Euclidean distance and hence, $(\eta_ku)\circ\phi_k^{-1}$ belongs to the  Sobolev space $ W^{1,pq'}(\mathbb{B}_{\mathbf{0}}(1))$, where $q'=q/(q-1)$.
Meyers-Serrin's theorem then yields a sequence $v_{k_j}\in C_0^\infty(\mathbb{B}_{\mathbf{0}}(1))$ with $\lim_{j\rightarrow\infty}\|v_{k_j}-(\eta_ku)\circ\phi_k^{-1}\|_{W^{1,pq'}(\mathbb{B}_{\mathbf{0}}(1))}=0$. Therefore, we have $v_{k_j}\circ\phi_k\in C^\infty_0(M_o)$ with $\text{supp}(v_{k_j}\circ\phi_k)\subset U_k$, which  implies
\begin{align*}
&\|v_{k_j}\circ\phi_k-(\eta_ku)\|^p_D=\int_K |\nabla(v_{k_j}\circ\phi_k)-\nabla(\eta_ku)|^{p}\rho^{p+\beta} d\mu\\
\leq& \left(\int_K |\nabla(v_{k_j}\circ\phi_k)-\nabla(\eta_ku)|^{pq'} d\mu \right)^{\frac1{q'}}\left( \int_K \rho^{q(p+\beta)} d\mu\right)^{\frac1q} \\
\leq& C\,\|v_{k_j}-(\eta_ku)\circ\phi_k^{-1}\|^p_{W^{1,pq'}(\mathbb{B}_{\mathbf{0}}(1))}\left( \int_K \rho^{q(p+\beta)} d\mu\right)^{\frac1q}\rightarrow 0, \text{ as }j\rightarrow\infty,
\end{align*}
where $C=C\left(g|_{K},d\mu|_{K}\right)$ is a positive constant only dependent on $g|_K$ and $d\mu|_{K}$.
Hence, $(\eta_ku)\in D^{1,p}(M_o,\rho^{p+\beta})$. We conclude the proof by $u=\sum_{k=1}^N(\eta_ku)$.
\end{proof}

\begin{lemma}\label{controlnormandspace}
Let $(M,o,g,d\mu)$, $p,\alpha,\beta$ and $\rho$  be as in Definition \ref{DefDS}. Denote by $L^p(M_o,\rho^\beta|\nabla\rho|^p)$  the closure of  $C^\infty_0(M_o)$ with respect to the norm
\[
\|u\|_L:=\left( \int_{M_o} | u|^p \rho^{\beta}|\nabla\rho|^p d\mu\right)^{\frac1p}.
\]
Thus, $D^{1,p}(M_o,\rho^{p+\beta})\subset L^p(M_o,\rho^\beta|\nabla\rho|^p)$ and moreover, $\|f\|_{L}\leq \vartheta_{\alpha,\beta,p}\|f\|_D$ for any $f\in D^{1,p}(M_o,\rho^{p+\beta})$.
\end{lemma}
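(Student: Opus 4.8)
The plan is to read off the claimed inequality directly from Lemma \ref{mainlemmforcr} on the dense subspace $C^\infty_0(M_o)$ and then promote it to the full completion $D^{1,p}(M_o,\rho^{p+\beta})$ by a standard Cauchy-sequence argument. The routine part is the inequality on smooth compactly supported functions: for every $\varphi\in C^\infty_0(M_o)$, Lemma \ref{mainlemmforcr} gives
\[
\|\varphi\|_L^p=\int_{M_o}|\varphi|^p\rho^\beta|\nabla\rho|^p\,d\mu\leq(\vartheta_{\alpha,\beta,p})^p\int_{M_o}|\nabla\varphi|^p\rho^{p+\beta}\,d\mu=(\vartheta_{\alpha,\beta,p})^p\|\varphi\|_D^p,
\]
so that $\|\varphi\|_L\leq\vartheta_{\alpha,\beta,p}\|\varphi\|_D$ on the common dense subspace. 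In other words, the identity map on $C^\infty_0(M_o)$ is Lipschitz from the $\|\cdot\|_D$-norm to the $\|\cdot\|_L$-norm, with constant $\vartheta_{\alpha,\beta,p}$.

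Next I would take an arbitrary $f\in D^{1,p}(M_o,\rho^{p+\beta})$ together with a defining sequence $u_j\in C^\infty_0(M_o)$ satisfying $\|u_j-f\|_D\to0$. Applying the inequality above to the differences $u_j-u_k$ shows at once that $(u_j)$ is $\|\cdot\|_L$-Cauchy. Here I would invoke the hypotheses of Definition \ref{DefDS}: since $\rho\geq0$ with $\rho\neq0$ a.e. and $\rho/|\nabla\rho|\leq C$, one has $|\nabla\rho|\geq\rho/C>0$ a.e., so the weight $\rho^\beta|\nabla\rho|^p$ is positive almost everywhere and $L^p(M_o,\rho^\beta|\nabla\rho|^p)$ is a genuine complete weighted Lebesgue space. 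Hence $(u_j)$ converges in $\|\cdot\|_L$ to some $\tilde f\in L^p(M_o,\rho^\beta|\nabla\rho|^p)$, and passing to the limit in $\|u_j\|_L\leq\vartheta_{\alpha,\beta,p}\|u_j\|_D$ (using continuity of both norms along the two convergences) yields $\|\tilde f\|_L\leq\vartheta_{\alpha,\beta,p}\|f\|_D$.

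The subtle point, and what I expect to be the main obstacle, is not the estimate but the bookkeeping needed to realize the abstract completion element $f$ as an honest function, so that both the inclusion $D^{1,p}(M_o,\rho^{p+\beta})\subset L^p(M_o,\rho^\beta|\nabla\rho|^p)$ and the final bound $\|f\|_L\leq\vartheta_{\alpha,\beta,p}\|f\|_D$ make literal sense. I would resolve this by checking that the correspondence $f\mapsto\tilde f$ is independent of the chosen defining sequence: if $(v_j)$ is another sequence with $\|v_j-f\|_D\to0$, then $\|u_j-v_j\|_D\to0$, whence $\|u_j-v_j\|_L\to0$ and the two $\|\cdot\|_L$-limits coincide. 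This makes $f\mapsto\tilde f$ a well-defined linear map, under which we identify $f$ with $\tilde f$; it is precisely the identification used implicitly elsewhere (for instance in the proofs of Lemma \ref{secondlemmaforsharp} and Corollary \ref{cannotachconsta}, where $\|\cdot\|_D$-convergence is upgraded to $\|\cdot\|_L$-convergence and then, along a subsequence, to pointwise a.e. convergence). With this identification in place, both the stated inclusion and the inequality follow immediately.
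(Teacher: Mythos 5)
Your reduction to the dense subspace and the subsequent limiting argument reproduce only the easy half of what Lemma \ref{controlnormandspace} asserts, and they miss the point on which the paper's proof actually spends its effort. Checking that the $\|\cdot\|_L$-limit $\tilde f$ is independent of the chosen approximating sequence shows that $f\mapsto\tilde f$ is a well-defined bounded linear map from the abstract completion $D^{1,p}(M_o,\rho^{p+\beta})$ into $L^p(M_o,\rho^\beta|\nabla\rho|^p)$; it does \emph{not} show that this map is injective, and injectivity is exactly what the inclusion ``$\subset$'' --- and the realization of elements of $D^{1,p}$ as honest functions, which you correctly flag as the subtle point --- requires. Since $\|\cdot\|_D$ sees only the gradient, it is not a priori excluded that a $\|\cdot\|_D$-fundamental sequence tends to $0$ in $\|\cdot\|_L$ without tending to $0$ in $\|\cdot\|_D$, in which case two distinct completion elements would have the same image. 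The paper's proof consists precisely of ruling this out, via the consistency criterion of Berezansky--Sheftel--Us \cite[Theorem 7.2]{BSU}: one must show that if $f_n\in C^\infty_0(M_o)$ is $\|\cdot\|_D$-fundamental and $\|f_n\|_L\to0$, then $\|f_n\|_D\to0$. This is done by combining $\|f\|_{p,\beta}\le C\|f\|_L$ (which comes from the hypothesis $\rho/|\nabla\rho|\le C$ of Definition \ref{DefDS}) with the Hardy inequality of Lemma \ref{mainlemmforcr} to obtain the equivalence of $\|\cdot\|_D$ with the full Sobolev-type norm $\|\cdot\|_{1,p,\beta}$ on $C^\infty_0(M_o)$, and then identifying the limit in that completion. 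In your write-up the hypothesis $\rho/|\nabla\rho|\le C$ is invoked only to argue that the weight $\rho^\beta|\nabla\rho|^p$ is positive a.e.; that is not where it is needed.

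A smaller but not cosmetic point: Lemma \ref{mainlemmforcr} gives $(\vartheta_{\alpha,\beta,p})^p\int_{M_o}|u|^p\rho^\beta|\nabla\rho|^p\,d\mu\le\int_{M_o}|\nabla u|^p\rho^{p+\beta}\,d\mu$, i.e. $\|u\|_L\le\vartheta_{\alpha,\beta,p}^{-1}\|u\|_D$, not $\|u\|_L\le\vartheta_{\alpha,\beta,p}\|u\|_D$ as your first display claims to derive from it (the statement of the lemma itself carries the same slip). Since $\vartheta_{\alpha,\beta,p}$ need not exceed $1$, the two constants are not interchangeable, so your assertion that the displayed inequality ``follows from Lemma \ref{mainlemmforcr}'' is, as written, false.
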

\begin{proof} The lemma is a special case of Berezansky, Sheftel and Us\cite[Theorem 7.1]{BSU} due to Lemma \ref{mainlemmforcr}. According to Berezansky et al. \cite[Theorem 7.2]{BSU}, it suffices to show that if a sequence $f_n\in C^\infty_0(M_o)$ is  fundamental respect to $\|\cdot\|_D$ and approaches to $0$ with respect to $\|\cdot\|_L$, then $\{f_n\}_n$ also approaches to $0$ with respect to $\|\cdot\|_D$.

In order to prove this, let $\|\cdot\|_{1,p,\beta}$ be a norm  on $C^\infty_0(M_o)$ defined by
\[
\|u\|_{1,p,\beta}:=\|u\|_{p,\beta}+\|\nabla u\|_D:=\left( \int_{M_o} |u|^p\rho^{p+\beta} d\mu \right)^{\frac1p}+\left( \int_{M_o} |\nabla u|^p\rho^{p+\beta} d\mu \right)^{\frac1p}.
\]
 Since $\rho/|\nabla\rho|\leq C$, we have
\[
\|f\|_D\leq \|f\|_{1,p,\beta}\leq (1+C\vartheta_{\alpha,\beta,p})\|f\|_D,\ \forall\,f\in C^\infty_0(M_o).\tag{A.1}\label{equvilence}
\]
Namely, $\|\cdot\|_D$ is equivalent to $\|\cdot\|_{1,p,\beta}$. Suppose that $\{f_n\}_n$ converges to $h\in D^{1,p}(M_o,\rho^{p+\beta})$ under $\|\cdot\|_D$.
Thus, the sequence $\{f_n\}_n$  also converges to $h$ with respect to $\|\cdot\|_{1,p,\beta}$ and hence, $\|f_n-h\|_{p,\beta}\rightarrow 0$.
Now $h=0$ follows from $\|f_n\|_{p,\beta}\leq C\|f_n\|_L\rightarrow 0$.
\end{proof}
\begin{remark}\label{newimoprremar}
 In the above proof, we do not require $\rho^{(p+\beta)/(1-p)},\rho^{q(p+\beta)}\in L^1(M)$ for some $q>1$.
On the other hand, if $\rho$ additionally satisfies this assumption,  then (\ref{equvilence}) implies $W^{1,p}(M_o,\rho^{p+\beta})=D^{1,p}(M_o,\rho^{p+\beta})$, where $W^{1,p}(M_o,\rho^{p+\beta})$ is defined as in Definition \ref{newst2defi}.
\end{remark}


In the sequel, let $(M,o,g,d\mu),p,\beta$ and $\rho$ be as in Definition \ref{newst2defi}.
We now define the weighted $L^p$-space $L^p(M,\rho^{p+\beta})$ (resp., $L^p(TM,\rho^{p+\beta})$) as the completion of $C^\infty(M)$ (resp., $\Gamma^\infty(TM)$, i.e., the space of the smooth sections of the tangent bundle) under the norm
\[
\|u\|_{p,\beta}:=\left( \int_{{M}}|u|^p{\rho}^{p+\beta} d\mathfrak{m}\right)^{\frac1p} \ \ \left(\text{resp., } \|X\|_{p,\beta}:=\||X|\|_{p,\beta}=\left( \int_{{M}}|X|^p{\rho}^{p+\beta} d\mathfrak{m}\right)^{\frac1p} \right).
\]
And set $L^p(M):=L^p(M,\rho^0)$ and $L^p(TM):=L^p(TM,\rho^0)$.

\begin{lemma}\label{Soleweakder}Let $(M,o,g,d\mu),p,\beta$ and $\rho$ be as in Definition \ref{newst2defi}.
If $u\in W^{1,p}({M},{\rho}^{p+\beta})$, then $u\in W^{1,1}(M)$. Moreover, the gradient $\varpi$ of $u$ in $W^{1,p}({M},{\rho}^{p+\beta})$ is the distributional derivative of $u$, i.e.,  $\varpi\in L^1(TM)$ and
\[
\int_{{M}} g( X,\varpi) d{\mu}=-\int_{{M}}u \di_{\mu} X d{\mu},\ \text{ for any smooth vector field }X.
\]
\end{lemma}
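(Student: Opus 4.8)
The plan is to deduce the entire statement from a single Hölder estimate that converts the weighted norm $\|\cdot\|_{p,\beta}$ into control of the ordinary $L^1$-norm, and then to pass to the limit in the integration-by-parts identity that holds for smooth functions on the closed manifold $M$.

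First I would record the Hölder estimate. For any $f\in L^p(M,\rho^{p+\beta})$, writing $|f|=|f|\rho^{(p+\beta)/p}\cdot\rho^{-(p+\beta)/p}$ and applying Hölder's inequality with exponents $p$ and $p/(p-1)$ gives
\[
\int_M |f|\, d\mu\leq \|f\|_{p,\beta}\left(\int_M \rho^{(p+\beta)/(1-p)}\, d\mu\right)^{(p-1)/p}.
\]
The exponent of $\rho$ in the second factor is exactly $(p+\beta)/(1-p)$, and since $\rho^{(p+\beta)/(1-p)}\in L^1(M)$ by Definition \ref{newst2defi}, the right-hand side is finite. Because $d\mu=e^{-\Psi}\dvol_g$ with $\Psi\in C^\infty(M)$ and $M$ is compact, $L^1(M,d\mu)$ and $L^1(M,\dvol_g)$ coincide with equivalent norms, so this estimate shows that $\|\cdot\|_{p,\beta}$ dominates the $L^1(M)$-norm up to a fixed constant; the same inequality applied to $|X|$ shows that $\|\cdot\|_{p,\beta}$ dominates the $L^1(TM)$-norm of a vector field $X$.

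Next I would invoke the defining sequence. By definition of the completion there is $\{u_j\}\subset C^\infty(M)$ with $\|u_j-u\|_{p,\beta}\to 0$ and $\|\,|\nabla u_j-\varpi|\,\|_{p,\beta}\to 0$, where $\varpi$ is the gradient of $u$ in the weighted space. The Hölder estimate then forces $u_j\to u$ in $L^1(M)$ and $\nabla u_j\to\varpi$ in $L^1(TM)$; in particular $u\in L^1(M)$ and $\varpi\in L^1(TM)$. On the other hand, for smooth $u_j$ and any smooth vector field $X$, the product rule $\di_\mu(u_j X)=u_j\,\di_\mu X+g(\nabla u_j,X)$ together with Stokes' theorem on the closed manifold $M$, using \eqref{divegedfe} to write $\int_M \di_\mu(u_j X)\,d\mu=\int_M d(u_j X\rfloor d\mu)=0$, yields
\[
\int_M g(\nabla u_j,X)\,d\mu=-\int_M u_j\,\di_\mu X\, d\mu.
\]

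Finally I would pass to the limit. Fixing a smooth $X$, both $|X|$ and $|\di_\mu X|$ are bounded on compact $M$, so
\[
\left|\int_M g(X,\nabla u_j-\varpi)\, d\mu\right|\leq \max_M|X|\cdot\|\,|\nabla u_j-\varpi|\,\|_{L^1(TM)}\longrightarrow 0,
\]
and likewise $\int_M u_j\,\di_\mu X\,d\mu\to\int_M u\,\di_\mu X\,d\mu$. Letting $j\to\infty$ gives $\int_M g(X,\varpi)\,d\mu=-\int_M u\,\di_\mu X\,d\mu$ for every smooth $X$, which is precisely the assertion that $\varpi$ is the distributional derivative of $u$; combined with $u,\varpi\in L^1$ this gives $u\in W^{1,1}(M)$. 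The only genuinely delicate point is the Hölder step, namely checking that the conjugate exponent reproduces exactly the integrable power $\rho^{(p+\beta)/(1-p)}$; everything afterward is a routine limiting argument, since the compactness of $M$ makes every auxiliary quantity ($e^{-\Psi}$, $|X|$, $|\di_\mu X|$) bounded and removes boundary terms from the integration by parts.
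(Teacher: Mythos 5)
Your proposal is correct and follows essentially the same route as the paper: the Hölder estimate with conjugate exponent $p/(p-1)$ reducing the weighted norm to an $L^1$ bound via the integrability of $\rho^{(p+\beta)/(1-p)}$, followed by passage to the limit along the defining sequence $u_j\in C^\infty_0(M)=C^\infty(M)$ using the boundedness of $|X|$ and $|\di_\mu X|$ on the compact manifold. The only difference is cosmetic: you spell out the integration-by-parts identity $\int_M g(\nabla u_j,X)\,d\mu=-\int_M u_j\,\di_\mu X\,d\mu$ for smooth $u_j$ via (\ref{divegedfe}) and Stokes, a step the paper uses implicitly.
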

\begin{proof}
Given $f\in L^p({M},\rho^{p+\beta})$, the H\"older inequality yields
\begin{align*}
\int_M|f| d{\mu}&=\int_M |f|{\rho}^{\frac{\beta+p}{p}}{\rho}^{-\frac{\beta+p}{p}}d{\mu}\leq \left( \int_M |f|^p {\rho}^{p+\beta}d{\mu} \right)^{\frac1p}\left( \int_M \rho^{\frac{p+\beta}{1-p}} d{\mu} \right)^{\frac{p-1}p}. \tag{A.2}\label{newappedix}
\end{align*}
Consequently, if $u\in W^{1,p}({M},{\rho}^{p+\beta})$,   (\ref{newappedix}) implies $u,|\varpi|\in L^{1}({M})$.
On the other hand,
there exists a sequence $u_j\in C^\infty_0({M})$ such that $\|u_j-u\|_{p,\beta}+\|\nabla u_j-\varpi\|_{p,\beta}=\|u_j-u\|_{1,p,\beta}\rightarrow0$. Thus, for any smooth vector field $X$,  (\ref{newappedix}) together with the compactness of $M$ yields
\begin{align*}
&\left| \int_{{M}} g(  X,\varpi)-(-u{\di}_{\mu} X) d{\mu} \right|=\left| \int_{{M}}  g( X,\varpi)- g( X,\nabla u_j)+ g( X,\nabla u_j) -(-u{\di}_{\mu} X) d{\mu}  \right|\\
\leq &\int_M \left|g(  X, \nabla u_j-\varpi) \right|d{\mu}+ \int_M \left|(u_j-u){\di}_{\mu} X \right|d{\mu}\\
\leq &\max_M |X| \int_{M}|\nabla u_j-\varpi|d{\mu} +\max_M|{\di}_{\mu} X| \int_{M}|u_j- u|d{\mu} \\
\leq &\left(\max_M |X|+\max_M|{\di}_{\mu} X|\right)\left( \int_M {\rho}^{\frac{p+\beta}{1-p}} d{\mu} \right)^{\frac{p-1}p}\left(\|u_j -u\|_{p,\beta} +\|\nabla u_j-\varpi \|_{p,\beta}\right)\rightarrow 0.
\end{align*}
Furthermore, (\ref{newappedix}) also implies that $u_j\rightarrow u$ in $W^{1,1}({M})$ and hence, the lemma follows.
\end{proof}


\begin{lemma}\label{maxminle}Let $(M,o,g,d\mu),p,\beta$ and $\rho$ be as in Definition \ref{newst2defi}.
If $u\in W^{1,p}({M},\rho^{\beta+p})$, then $|u|$, $u_+:=\max\{u,0\}$ and $u_-:=-\min\{u,0\}$  are all in $W^{1,p}({M},\rho^{\beta+p})$.
\end{lemma}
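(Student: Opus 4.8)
The plan is to reduce everything to the positive part $u_+$ and then to exhibit a sequence of admissible functions converging to it in $\|\cdot\|_{1,p,\beta}$. Since $-u\in W^{1,p}(M,\rho^{p+\beta})$ whenever $u$ is, and since $u_-=(-u)_+$ and $|u|=u_++u_-$, it suffices to treat $u_+$; the remaining two assertions then follow from the vector space structure of $W^{1,p}(M,\rho^{p+\beta})$. By Lemma \ref{Soleweakder}, $u$ is a genuine function lying in $W^{1,1}(M)$ whose weighted gradient $\varpi$ equals its distributional derivative and satisfies $\int_M|\varpi|^p\rho^{p+\beta}\,d\mu<\infty$. Applying the classical truncation rule for Sobolev functions (Lieb--Loss \cite[Theorem 6.17]{LL}, read in local charts) I obtain $u_+\in W^{1,1}(M)$ with $\nabla u_+=\mathbf{1}_{\{u>0\}}\varpi$ a.e.; combined with $\nabla u=\nabla u_+-\nabla u_-$ this also forces $\varpi=0$ a.e. on $\{u=0\}$, a fact I will use below.

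Next I would choose $u_j\in C^\infty(M)$ with $\|u_j-u\|_{1,p,\beta}\to0$ and, after passing to a subsequence, arrange $u_j\to u$ a.e. Each $(u_j)_+$ is globally Lipschitz on the closed manifold $M$, hence belongs to $W^{1,p}(M,\rho^{p+\beta})$ by the mollification argument of Lemma \ref{lpschcom}: one mollifies in charts, uses that a Lipschitz function lies in the unweighted Sobolev space $W^{1,pq'}(M)$, and passes to the weighted norm via Hölder's inequality together with $\rho^{q(p+\beta)}\in L^1(M)$. It then remains to prove $(u_j)_+\to u_+$ in $\|\cdot\|_{1,p,\beta}$, a convergence of honest functions that is meaningful because $\|u_+\|_{p,\beta}\le\|u\|_{p,\beta}<\infty$ and $\||\nabla u_+|\|_{p,\beta}\le\||\varpi|\|_{p,\beta}<\infty$. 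Once this is shown, $\{(u_j)_+\}$ is Cauchy in the complete space $W^{1,p}(M,\rho^{p+\beta})$, so it has a limit there; since the estimate (\ref{newappedix}) turns $\|\cdot\|_{1,p,\beta}$-convergence into $W^{1,1}(M)$-convergence, a further subsequence converges a.e. and identifies that limit with $u_+$, giving $u_+\in W^{1,p}(M,\rho^{p+\beta})$.

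The zeroth-order estimate is immediate from $|(u_j)_+-u_+|\le|u_j-u|$, which gives $\|(u_j)_+-u_+\|_{p,\beta}\le\|u_j-u\|_{p,\beta}\to0$. The gradient term is where the genuine difficulty lies. Using $\nabla(u_j)_+=\mathbf{1}_{\{u_j>0\}}\nabla u_j$ I would write
\[
\mathbf{1}_{\{u_j>0\}}\nabla u_j-\mathbf{1}_{\{u>0\}}\varpi=\mathbf{1}_{\{u_j>0\}}\bigl(\nabla u_j-\varpi\bigr)+\bigl(\mathbf{1}_{\{u_j>0\}}-\mathbf{1}_{\{u>0\}}\bigr)\varpi.
\]
The first summand is controlled in $L^p(M,\rho^{p+\beta})$ by $\||\nabla u_j-\varpi|\|_{p,\beta}\to0$. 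The second summand is the main obstacle, since the indicators do not converge uniformly: I would dispatch it by dominated convergence, observing that on $\{u\ne0\}$ the a.e. convergence $u_j\to u$ yields $\mathbf{1}_{\{u_j>0\}}\to\mathbf{1}_{\{u>0\}}$ pointwise, whereas on $\{u=0\}$ the factor $|\varpi|^p$ vanishes a.e. by the first paragraph; throughout, the integrand is dominated by the fixed $L^1$-function $|\varpi|^p\rho^{p+\beta}$. Hence the second summand also tends to $0$, so $\|(u_j)_+-u_+\|_{1,p,\beta}\to0$, which completes the argument.
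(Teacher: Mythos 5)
Your argument is correct and follows essentially the same route as the paper's proof: reduce to a single truncation, handle the smooth/Lipschitz case by unweighted Sobolev approximation plus H\"older with $\rho^{q(p+\beta)}\in L^1(M)$, and then pass to general $u$ by splitting the gradient difference into a term controlled by norm convergence and a term killed by dominated convergence, using that the weak gradient vanishes a.e.\ on $\{u=0\}$. The only differences are cosmetic: you work with $u_+$ and the indicator $\mathbf{1}_{\{u>0\}}$ where the paper works with $|u|$ and $\mathrm{sgn}(u)$ via Lieb--Loss, and you invoke Meyers--Serrin in charts where the paper cites Hebey.
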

\begin{proof} Note that $u_+=\frac12(u+|u|)$ and $u_-=\frac12(|u|-u)$.  Hence, it suffices to show $|u|\in W^{1,p}({M},\rho^{\beta+p})$.
Set $q':=q/(q-1)$. Thus, for any   $f\in L^{pq}(M)$, the H\"older inequality yields
\begin{align*}
\int_M |f|^p{\rho}^{\beta+p}d{\mu}\leq \left( \int_M  |f|^{pq'} d{\mu}\right)^{\frac1{q'}} \left(  \int_M {\rho}^{ {q(\beta+p)}}d{\mu}\right)^{\frac{1}{q}}.\tag{A.3}\label{newholderB2}
\end{align*}

 First we consider the case when $u\in C^\infty_0({M})$.
The standard theory yields a sequence $u_j\in C^\infty_0({M})$ such that
$u_j\rightarrow |u|$ in $W^{1,pq'}({M})$ (cf. Hebey \cite[Lemma 2.5]{H}),
which together with (\ref{newholderB2}) implies  $u_j\rightarrow |u|$ in $W^{1,p}({M}, {\rho}^{p+\beta})$. Hence, $|u|\in W^{1,p}({M}, {\rho}^{p+\beta})$.

 For the general case (i.e., $u\in W^{1,p}({M},{\rho}^{\beta+p})$), choose a sequence $u_j\in C^\infty_0({M})$ such that $\|u_j- u\|_{1,p,\beta}\rightarrow 0$.  Thus, Lemma \ref{Soleweakder} and (\ref{newappedix}) imply $u_j\rightarrow u$
 in $W^{1,1}(M)$.
By  Lieb et al.
\cite[Theorem 6.17]{LL}, we have $\nabla|u_j|=\text{sgn}(u_j)\nabla u_j$ and $\nabla|u|=\text{sgn}(u)  \nabla u$. From this, by passing a subsequence,
one can show that      $|u_{j}|\rightarrow |u|$ and $\nabla|u_{j}|\rightarrow \nabla|u|$ pointwise a.e., which
implies $(\text{sgn}(u_j)-\text{sgn}(u))\nabla u\rightarrow 0$ pointwise a.e. Since
\begin{align*}
\left|(\text{sgn}(u_j)- \text{sgn}(u))\nabla u \right|^p\rho^{p+\beta} \leq 2^p|\nabla u |^p \rho^{p+\beta}\in L^1(M),
 \end{align*}
 the dominated convergence theorem  yields
\begin{align*}
 &\lim_{j\rightarrow\infty}\int_{M}| \nabla|{u_{j}}|-\nabla |u||^p\rho^{p+\beta}d\mu=\lim_{j\rightarrow\infty}\int_{M}|  \text{sgn}(u_j)\nabla {u_{j}} -\text{sgn}(u)\nabla u|^p\rho^{p+\beta}d\mu\\
 \leq &\lim_{j\rightarrow\infty}2^p\left[\int_{M} |(\text{sgn}(u_j)- \text{sgn}(u))\nabla u |^p \rho^{p+\beta}d\mu  +\int_{M} |\nabla u_j-\nabla u|^p    \rho^{p+\beta}d\mu       \right]=0,
 \end{align*}
which together with $\|u_j- u\|_{p,\beta}\rightarrow 0$ implies $\||u_j|- |u|\|_{1,p,\beta}\rightarrow 0$. Since $|u_j|\in W^{1,p}({M}, {\rho}^{p+\beta})$,
we are done.
\end{proof}

\noindent{\textbf{Acknowledgements}}
 This work was supported by  NNSFC (No. 11761058), NSFS (No. 19ZR1411700) and National College Students' innovation and entrepreneurship training program.

\end{document}